\documentclass{amsart}
\usepackage{amsmath} 
\usepackage{stmaryrd}
\usepackage{tikz}
\usetikzlibrary{decorations.pathreplacing}
\usepackage{amsthm,amssymb}
\usepackage[unicode]{hyperref}
\hypersetup{
	colorlinks=true,
	linkcolor=black,
	citecolor=black,
	urlcolor=blue
}
\DeclareMathOperator{\unif}{\mathsf{{R}-Unif}}
\usepackage{xcolor}
\usepackage{mathtools}
\usepackage{calc}
\usepackage{graphicx}
\usepackage{bm}
\usepackage{pifont}
\usepackage{enumitem}

\newtheorem{thm}{Theorem}[section]
\newtheorem{lem}[thm]{Lemma}
\newtheorem{qst}[thm]{Question}
\newtheorem{cor}[thm]{Corollary}
\newtheorem{postulate}[thm]{Postulate}
\newtheorem{prop}[thm]{Proposition}
\newtheorem{pblm}[thm]{Problem}
\newtheorem{conj}[thm]{Conjecture}
\newtheorem{claim}[thm]{Claim}

\theoremstyle{definition}
\newtheorem{defn}[thm]{Definition}
\newtheorem{fact}[thm]{Fact}

\numberwithin{equation}{section}

\newcommand{\lb}{\left(}
\newcommand{\rb}{\right)}
\newcommand{\la}{\langle}
\newcommand{\ra}{\rangle}
\newcommand{\llb}{\llbracket}
\newcommand{\rrb}{\rrbracket}
\newcommand{\rtar}{\rightarrow}

\newcommand{\bbr}{{\mathbb R}}
\newcommand{\bbq}{{\mathbb Q}}

\newcommand{\uphar}{\upharpoonright}
\newcommand{\idl}[1]{{\mathcal #1}}

\newcommand{\bbp}{{\mathbb P}}
\newcommand{\bbn}{{\mathbb N}}

\newcommand{\bfpi}{{\boldsymbol \Pi}}
\newcommand{\bfsgm}{{\boldsymbol \Sigma}}

\newcommand{\infsub}[1]{{[#1]^\omega}}
\newcommand{\finsub}[1]{{[#1]^{<\omega}}}
\newcommand{\inffunc}[1]{{#1^{\omega}}}
\newcommand{\finfunc}[1]{{#1^{<\omega}}}

\newcommand{\bairesp}{{\omega^\omega}}

\newcommand{\prst}{{\mathcal P}}
\newcommand{\suchthat}{{\  \mathrm{s.t.} \  }}

\newcommand{\dom}{{\rm dom}}

\newcommand{\zfcaxiom}{{\sf ZFC}}
\newcommand{\zfaxiom}{{\sf ZF}}
\newcommand{\acaxiom}{{\sf AC}}
\newcommand{\ccaxiom}{{\sf CC}}
\newcommand{\dcaxiom}{{\sf DC}}
\newcommand{\adaxiom}{{\sf AD}}
\newcommand{\chaxiom}{{\sf CH}}

\newcommand{\runifaxiom}{{\sf R\text{-}Unif}}
\newcommand{\pdaxiom}{{\sf PD}}

\newcommand{\choice}{{\sf CP}}

\newcommand{\encfunc}{{\mathrm{e}}}

\newcommand{\finalpha}[1]{\mathrm{Fin}^{#1}}
\newcommand{\terminalsequence}{\operatorname{TS}}
\newcommand{\indexsequence}{\operatorname{In}}
\newcommand{\concatenate}{{}^{\frown}}
\newcommand{\evendiff}{\mathcal{ED}}

\newenvironment{claimproof}[1]{
	\renewcommand{\qedsymbol}{{\tiny Claim #1} \(\square\)}
	\begin{proof}[Proof of Claim #1]
	}{
	\end{proof}
}

\title[Ramsey Property and Pathological Sets]{Ramsey Property and Pathological Sets: Almost Disjointness, Independence and Other Maximal Objects}

\author{Jialiang He}
\address{College of Mathematics, Sichuan University, Chengdu, Sichuan, 610064 China;}
\email{jialianghe@scu.edu.cn}

\author{Jintao Luo$ ^\ast $}
\address{College of Mathematics, Sichuan University, Chengdu, Sichuan, 610064 China;}
\email{jintaoluo@foxmail.com}
\author{Shuguo Zhang}

\address{College of Mathematics, Sichuan University, Chengdu, Sichuan, 610064 China;}
\email{zhangsg@scu.edu.cn}

\subjclass[2020]{Primary 03E15; Secondary 03E25, 03E60}
\keywords{Axiom of Choice; Axiom of Determinacy; Baire property; Lebesgue measurability; Ramsey property; almost disjoint family; independent family; Hamel basis; Vitali set.}

\thanks{$^{\ast}$Corresponding author}

\date{}

\begin{document}
	\begin{abstract}
		We show that under $\mathsf{ZF} + \mathsf{CC}_{\mathbb R}$, if the Ramsey property holds for all sets in a good pointclass $\Gamma$, then there is no MAD family in $\Gamma$, proving a long-standing conjecture made by A.R.D.\ Mathias in 1977. This also holds for $\mathcal I$-MAD families with respect to analytic ideals $\mathcal I$ including $\mathcal{ED}$, $\mathcal{ED}_{\mathrm{fin}}$, and $\finalpha{\alpha}$ for all countable ordinals $\alpha$. Under the same assumption, we show that if any one of the Baire property, Lebesgue measurability or Ramsey property holds for all sets in $\Gamma$, then there is no maximal independent family in $\Gamma$. Under the stronger assumption $\mathsf{ZF} + \mathsf{DC}_{\mathbb R}$, we further prove that if the Ramsey property holds for all sets in $\Gamma$, then $\Gamma$ contains no Vitali sets and thus no Hamel bases.
	\end{abstract}
	
	\maketitle
	
	\section{Introduction}

	\subsection{Background}
	
	The study of regularity properties constitutes a central and enduring theme in the foundations of mathematics. Since the emergence of modern real analysis, mathematicians have sought to delineate the boundary between well-behaved objects, i.e. those that can be approximated by simple geometric or combinatorial configurations, and pathological objects, whose existence challenges our intuitive understanding of the mathematical universe. This dichotomy between regularity and pathology remains a cornerstone of descriptive set theory and continues to drive the field's development.
	
	The classical regularity properties are well-established. A set of real numbers is said to be Lebesgue measurable if it can be assigned a meaningful length or volume. A set has the Baire property if it differs from an open set by a meager set, a topological notion of regularity. A subset of $[\mathbb N]^\omega$, the collection of infinite subsets of $ \bbn $, is said to have the Ramsey property if it admits the existence of infinite homogeneous subsets under partitions. These properties constitute the primary framework for evaluating structural regularity, each capturing a different facet of what it means for a set to be tame.
	
	However, the \emph{Axiom of Choice} $ \acaxiom $, a foundational postulate of modern set theory, guarantees the existence of sets that fail to satisfy certain regularity properties. The Vitali set is the paradigmatic example of a non-measurable set whose existence relies crucially on the Axiom of Choice. Hamel bases, i.e. bases of $\mathbb{R}$ viewed as a vector space over $\mathbb{Q}$, provide another example of pathological phenomena, as they are also non-measurable and their existence implies discontinuous additive functions. In the combinatorial realm, maximal almost disjoint (MAD) families are a central example of such pathology: a MAD family is a maximal collection of infinite subsets of $\mathbb{N}$ with the property that any two distinct sets in the family intersect only finitely. Their existence, typically proved using Zorn's Lemma, yields highly complex combinatorial objects that resist simple description. In addition to MAD families, the literature has also focused on other classical combinatorial pathologies, such as \textbf{m}aximal \textbf{i}n\textbf{d}ependent (MID) families \cite{miller1989infinite}. More recently, $\mathcal I$-MAD families, as generalizations of MAD families with respect to different ideals, have been introduced to probe ever deeper into the structure of pathological combinatorial objects \cite{bakke2022maximal}.
	
	The tension between combinatorial pathologies and structural regularity is particularly evident in the study of Ramsey property. Let $\infsub{A}$ be the set of all infinite subsets of $ A $ where $A\subseteq \bbn$. A set $X\subseteq \infsub{\bbn}$ is called Ramsey if $\exists B\in\infsub{\bbn}$, such that either $\infsub{B}\subseteq X$ or $X\cap \infsub{B} = \emptyset$. One can view a set $X\subseteq \infsub{\bbn}$ as a $2$-coloring of the complete hypergraph $\infsub{\bbn}$, and the Ramsey property asserts that this coloring has an infinite monochromatic hyperclique. In this notation, all subsets of $ \infsub{\bbn} $ being Ramsey is equivalent to the generalized Ramsey theorem $\omega\rtar(\omega)^{\omega}_{2}$.
	
	It turns out that the existence of MAD families is closely related to Ramsey property. The systematic investigation of this topic was initiated by A. R. D. Mathias. In his seminal paper \cite{mathias1977happy}, Mathias introduced the concept of a happy family (now often referred to as a selective coideal) and developed the notion of Mathias forcing. Within this framework, he proved that no analytic MAD family exists \cite[~Corollary 4.7]{mathias1977happy}, and established the consistency of the non-existence of MAD families from a Mahlo cardinal. As he had already shown that all sets are Ramsey in Solovay's model \cite{mathias1969generalization}, a model collapsed from an inaccessible cardinal \cite{solovay1970model}, these results suggested the following questions:
	\begin{conj}{\label{conjecture:con_ramsey}}
		\phantom{a}
		\begin{enumerate}[label=(\arabic*), font=\upshape]
			\item In Solovay's model where all sets are Lebesgue measurable, have Baire property and Ramsey property, are there infinite MAD families?
			\item \cite[~Page 87]{mathias1977happy} Is it a theorem of $\zfaxiom+\dcaxiom_\bbr$ that there are no (infinite) MAD families assuming all sets are Ramsey?
		\end{enumerate}
	\end{conj}

	Towards Conjecture \ref{conjecture:con_ramsey}(1), Asger Dag T\"ornquist proved that there are indeed no infinite MAD families in Solovay's model \cite{tornquist2018definability}. Subsequently, Itay Neeman and Zach Norwood gave another proof of this result and additionally showed that under $\mathsf{AD}^+$ or within $L(\mathbb{R})$ in the presence of large cardinals, MAD families are impossible \cite{neeman2018happy}. Karen Bakke Haga, David Schrittesser and T\"ornquist generalized these results and proved that under $\mathsf{AD}$ and $ V=L(\mathbb{R}) $, or under $\mathsf{AD}^+$ and dependent choice, there are no $\finalpha{\alpha}$-MAD families for all countable ordinals $\alpha$ \cite{bakke2022maximal}. Towards Conjecture \ref{conjecture:con_ramsey}(2), in their celebrated paper \cite{schrittesser2019ramsey}, Schrittesser--T\"ornquist showed that, under the hypothesis that all sets have the Ramsey property augmented by the additional principle $\unif$, there are indeed no MAD families. Furthermore, Schrittesser-T\"ornquist proved that, assuming $ \runifaxiom $, all sets being Ramsey implies no MAD families with respect to all transfinitely iterated Fr\'echet ideals \cite{schrittesser2020ramsey}. Recently, the results on determinacy and MAD families have been generalized to uncountable cardinals by William Chan, Stephen Jackson and Nam Trang \cite{chan2024almost}.
	
	The problem of the consistency strength of MAD families was eventually resolved by Horowitz--Shelah, who showed that the non-existence of MAD families is equiconsistent with $ \mathsf{ZFC} $ \cite{horowitz2019non}. However, whether the extra assumption $\unif$ in the theorem of Schrittesser--Törnquist is necessary remained open.
	
	Among these combinatorial pathologies, the structure and existence of maximal independent families have also been extensively investigated. It is a classical result that there are no analytic infinite MID families, proved by Arnold W. Miller \cite{miller1989infinite} via forcing methods. Projective level MID families are investigated in \cite{brendle2019definable}, and it is shown under $ \zfcaxiom $ that if all sets in $\Gamma$ have the Baire property then there are no infinite MID families in $\Gamma$, where $\Gamma$ is a pointclass closed under existential quantification over reals. Recent investigation has also focused on the spectrum of MID families and its large cardinal version \cite{fischer2019spectrum,eskew2023strong}.
	
	Besides these maximal combinatorial objects, continuous pathologies have also been widely studied. It is well known that Vitali sets are not Lebesgue measurable and have no Baire property, so there are no analytic Vitali sets. While a Hamel basis can be Lebesgue measure zero \cite{sierpinski1920question}, it is also a classical result that there are no analytic Hamel bases, from a measure-theoretic argument \cite{jones1942measure}. This was also proved in \cite{miller1989infinite} via Cohen forcing. And globally, classical regularities indeed destroy Hamel bases classwise. It is folklore that the existence of a Hamel basis implies the existence of a Vitali set \cite{beriashvili2018hamel}, thus if all sets have Baire property or Lebesgue measurability, there are no Hamel bases.

	\subsection{Main Results}
	
	Our first main result establishes that Ramsey regularity suffices to eliminate a fundamental class of combinatorial pathology. Recall that a boldface pointclass is a collection of subsets of Polish spaces closed under continuous preimages and containing all clopen sets \cite[p.~118]{kechris2006axiom}. For our purposes, we call a pointclass \emph{good} if it contains all clopen sets and is closed under countable intersections, finite products, countable unions, continuous images, and continuous preimages. With this terminology in place, we have the following theorem under the \emph{Axiom of Countable Choice for Reals} $ \mathsf{CC}_{\mathbb R} $.
	
	\begin{thm}[$\mathsf{ZF} + \mathsf{CC}_{\mathbb R}$]\label{theorem:introduction}
		Let $\Gamma$ be a good pointclass such that every set in $\Gamma$ has the Ramsey property. Then there is no infinite MAD family in $\Gamma$.
	\end{thm}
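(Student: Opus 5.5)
Suppose toward a contradiction that $\mathcal A\in\Gamma$ is an infinite MAD family. The aim is to define, from $\mathcal A$ and using only the closure properties of a good pointclass, a set $X\subseteq\infsub{\bbn}$ in $\Gamma$ that cannot be Ramsey. The model is Mathias' proof that there are no analytic MAD families. There one considers the coideal $\mathcal H(\mathcal A)$ of sets not almost covered by finitely many members of $\mathcal A$. One then uses that $\mathcal A$ being infinite and MAD forces every $B\in\infsub{\bbn}$ to meet some $A\in\mathcal A$ infinitely, while $B\setminus A$ is still infinite after shrinking $B$ appropriately. Here we cannot use forcing or uniformization, which is exactly what $\runifaxiom$ supplied for Schrittesser--T\"ornquist. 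So the plan is to replace the uniformizing function by a \emph{definable coding} that needs only $\ccaxiom_\bbr$.

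\textbf{Step 1: a coding map.} I would fix a bijective coding of $\bbn$ by pairs or finite sequences, say $\bbn\cong\bbn\times\bbn$ via blocks. To each $x\in\infsub{\bbn}$ I associate, continuously, two reals: a set $a(x)$, read off from the even-indexed gaps of the increasing enumeration of $x$, and a set $b(x)$, read off from the odd-indexed gaps. Define
\[
X=\{x\in\infsub{\bbn} : a(x)\in\mathcal A \text{ and } b(x)\cap a(x) \text{ is infinite}\}
\]
or a close variant in which the second clause is "the part of $x$ coded in a certain way is almost contained in $a(x)$". This set is a continuous preimage of a set built from $\mathcal A$ by finite products and countable intersections and unions, because "infinite" and "almost contained" are Borel conditions. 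Hence $X\in\Gamma$, and so $X$ is Ramsey.

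\textbf{Step 2: rule out both homogeneity alternatives.} Let $B$ be homogeneous for $X$. If $\infsub{B}\cap X=\emptyset$, I would use MADness to find $A\in\mathcal A$ and then build $y\in\infsub{B}$ whose codes satisfy $a(y)=A$ and $b(y)\cap A$ infinite. Building $y$ means choosing gaps inside $B$ freely, and this requires the coding to be "thin": every real must be encodable by subsets of any infinite $B$. Gap-based codings have this property, since the gaps can be realized by picking elements of $B$ far enough apart. This gives a contradiction.

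If instead $\infsub{B}\subseteq X$, the map $x\mapsto a(x)$ sends $\infsub{B}$ into $\mathcal A$. By choosing $x,x'\subseteq B$ that differ only in one code, I would get two members of $\mathcal A$ with infinite intersection, or the same member conflicting with the almost-containment clause. Here the infiniteness of $\mathcal A$ is used to make the coded sets distinct yet overlapping. $\ccaxiom_\bbr$ enters only to pick countably many witnesses, for instance a countable subfamily $\{A_n\}\subseteq\mathcal A$ whose complement pieces are infinite, used in the first alternative.

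\textbf{Main obstacle.} The hard part is designing the coding so that both alternatives are contradictory simultaneously. The "$\infsub{B}\cap X=\emptyset$" case wants $X$ to be large, which requires free encoding inside arbitrary $B$. The "$\infsub{B}\subseteq X$" case wants $X$ to be rigid, so that it is impossible for all subsets of $B$ to code members of $\mathcal A$ compatibly. My first attempt would be a coding in which $a(x)$ is determined by $x$ restricted to a set that the subsets of $B$ can vary independently, together with a clause tying $x$ to $a(x)$ via almost containment. If a single set fails, I would fall back on a countable family $X_n$ and use closure under countable unions and intersections together with $\ccaxiom_\bbr$ to diagonalize across the homogeneous sets $B_n$. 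This would emulate the fusion argument in Mathias' proof via Galvin--Prikry-style iteration inside $\Gamma$.
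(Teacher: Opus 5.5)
\textbf{There is a genuine gap.} The proposal never produces a set $X$ that works, and the mechanism you sketch for the first alternative cannot exist.

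\emph{The free coding is impossible.} You require a coding $a$ such that every real (or at least the relevant member of $\mathcal A$) equals $a(y)$ for some $y\in[B]^\omega$, for \emph{every} infinite $B$. No continuous or Borel map has this property. The clopen set $\{x : 0\in a(x)\}$ is Ramsey by Galvin--Prikry, so on some $[B]^\omega$ either every $a(y)$ contains $0$ or none does. Gap codings do not escape this, because the gaps of $y\subseteq B$ are sums of consecutive gaps of $B$.

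\emph{Exact coding is too thin.} A test set of the form $\{x : a(x)\in\mathcal A\wedge\cdots\}$ asks $x$ to code a member of $\mathcal A$ \emph{exactly}. Already for $a=\mathrm{id}$ this fails: take $B\subseteq A_0\in\mathcal A$ with $A_0\setminus B$ infinite. Almost disjointness shows no member of $\mathcal A$ is a subset of $B$, so $[B]^\omega\cap X=\emptyset$.

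\emph{Maximality is not used.} Your sketch uses maximality only to ``find some $A\in\mathcal A$'' and uses almost disjointness in the other case. An argument of that shape would equally rule out infinite analytic almost disjoint families. These exist, e.g.\ the closed family of branches of $2^{<\omega}$ coded into $\mathbb N$. So maximality has to enter in an essential way. The ``main obstacle'' you flag is exactly the content of the theorem, and the fallback to countably many $X_n$ plus a fusion is not yet an argument.

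\textbf{The missing idea.} Do not code members of $\mathcal A$ at all. Instead, let a member of $\mathcal A$ \emph{absorb} a coded set, using the existential quantifier that closure under continuous images provides.

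Use $\mathsf{CC}_{\mathbb R}$ (infinite sets of reals are Dedekind-infinite) to fix distinct $A_n\in\mathcal A$, made pairwise disjoint with $A_n(0)<A_{n+1}(0)$. Put $\Phi(x)=\{A_{x(2n)}(x(2n+1)):n\in\mathbb N\}$; this is continuous and meets each $A_n$ in at most one point. Let $P=\{z : \exists A\in\mathcal A\ [\Phi(z)\subseteq A]\}$, which lies in $\Gamma$.

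\emph{$P$ is not avoided.} For any $x$, maximality gives $A$ with $\Phi(x)\cap A$ infinite. Keeping only the pairs $x(2n),x(2n+1)$ that produce those points gives $y\in[x]^\omega\cap P$.

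\emph{$P$ contains no $[x]^\omega$.} The $A$ above is none of the $A_n$, so $A\cap A_n$ is finite for all $n$. This lets you thin further to some $y$ whose $\Phi(y)$ alternately picks points of $A_{y(2i)}$ outside and inside $A$. If this $y$ were in $P$ with witness $C$, then $C\cap A\supseteq\Phi(y)\cap A$ is infinite, so $C=A$. That contradicts $\Phi(y)\not\subseteq A$.

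The rigidity you were looking for comes from applying almost disjointness to the existential witness, not to two different codes.
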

	
	This theorem provides a definitive answer to Mathias' conjecture.
	
	Our second main result extends this investigation from combinatorial pathology to the classical non-combinatorial examples. Under the \emph{Axiom of Dependent Choice for Reals} $ \mathsf{DC}_{\mathbb R} $, we prove:
	
	\begin{thm}[$\mathsf{ZF} + \mathsf{DC}_{\mathbb R}$]
		Let $\Gamma$ be a good pointclass such that every set in $\Gamma$ has the Ramsey property. Then $\Gamma$ contains no Vitali sets, and hence no Hamel bases.
	\end{thm}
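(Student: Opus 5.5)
The plan is to prove the statement in two steps. First, a Vitali set in $\Gamma$ contradicts the Ramsey property through a parity-colouring argument. Second, a Hamel basis in $\Gamma$ yields a Vitali set in $\Gamma$. As far as I can see, the argument only uses closure of $\Gamma$ under countable unions, finite intersections with Borel sets, finite products and continuous images and preimages; $\mathsf{DC}_{\mathbb R}$ does not seem to be needed, though I may be missing a use the authors have in mind. Here a Vitali set is a $V\subseteq\bbr$ meeting each coset of $\bbq$ in exactly one point.

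\emph{Step 1: no Vitali sets in $\Gamma$.} Code infinite sets by the continuous injection $r:\infsub{\bbn}\to\bbr$, $r(x)=\sum_{n\in x}3^{-n}$; it is injective because base-$3$ expansions with digits in $\{0,1\}$ are unique. If $x\subseteq y$ and $y\setminus x$ is finite, then $r(y)-r(x)\in\mathbb Z[1/3]$. Since $3$ is odd, the map $\mathbb Z[1/3]\to\mathbb Z/2$, $a/3^k\mapsto a \bmod 2$, is a well-defined homomorphism sending every $3^{-n}$ to $1$.

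To extend this to all of $\bbq$, fix an enumeration of $\bbq$ and let $s(q)$ be the first element of the coset $q+\mathbb Z[1/3]$ in that enumeration. Then set $\psi(q)$ to be the parity of $q-s(q)$. This gives $\psi(q+3^{-n})\neq\psi(q)$ for all $q$ and $n$.

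Now define
\[
Z=\{x\in\infsub{\bbn} : \exists q\in\bbq\ (\psi(q)=0 \wedge r(x)+q\in V)\}.
\]
This is a countable union of continuous preimages of $V$, so $Z\in\Gamma$. Since $V$ meets $r(x)+\bbq$ in exactly one point $v_x$, we have $x\in Z$ iff $\psi(v_x-r(x))=0$.

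By the Ramsey property there is $B$ with $\infsub{B}\subseteq Z$ or $\infsub{B}\cap Z=\emptyset$. Take $x\in\infsub{B}$, let $n=\min x$ and $x'=x\setminus\{n\}\in\infsub{B}$. Then $r(x')+\bbq=r(x)+\bbq$, so $v_{x'}=v_x$, and
\[
v_x-r(x')=(v_x-r(x))+3^{-n}.
\]
Hence $\psi$ takes different values at $x$ and $x'$, so exactly one of $x,x'$ lies in $Z$, contradicting homogeneity.

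\emph{Step 2: from a Hamel basis to a Vitali set.} Let $H\in\Gamma$ be a Hamel basis. Expand $1=\sum_i c_ih_i$ in $H$ and pick $h_0$ with $c_0\neq 0$. For each real $x$, adding a rational $q$ changes the (rational) $h_0$-coefficient of $x$ by $qc_0$. As $q$ ranges over $\bbq$, this shift ranges over all of $\bbq$, so each coset $x+\bbq$ contains exactly one point whose $h_0$-coefficient is $0$. Thus $V=\operatorname{span}_{\bbq}(H\setminus\{h_0\})$ is a Vitali set.

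For the complexity of $V$: the set $H\setminus\{h_0\}$ is in $\Gamma$, being the intersection of $H$ with the complement of a point, which is a countable union of basic clopen sets in the zero-dimensional coding of reals. Then $V$ is the countable union, over $k$ and $(q_1,\dots,q_k)\in\bbq^k$, of images of $(H\setminus\{h_0\})^k$ under the continuous maps $(h_1,\dots,h_k)\mapsto\sum_i q_ih_i$. Hence $V\in\Gamma$, and Step 1 applies.

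\emph{Main obstacle.} The delicate point is the bookkeeping that keeps everything inside $\Gamma$ and on one representation of the reals. Polish spaces such as $\bbr$ have few clopen sets, so I would work with reals coded in $2^\omega$ or $\omega^\omega$ and transport both $r$ and the addition maps continuously. With that settled, the combinatorial core is the parity homomorphism on $\mathbb Z[1/3]$, which turns "one representative per class" into a colouring that no homogeneous set can respect.
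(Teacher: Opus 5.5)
Your proof is correct, and Step 1 follows a genuinely different route from the paper's. The paper works with the Vitali selector $v$. The map $f=v\circ\encfunc$, with the binary coding $\encfunc(x)=\sum_i 2^{-(x(i)+1)}$, is $\Gamma$-measurable. By the fusion lemma, Proposition~\ref{prop:ramsey-continuous}(2), $f$ is continuous on some cube $\infsub{H}$; this is the only place $\mathsf{DC}_{\mathbb R}$ enters, to choose the sequence of homogeneous sets. Since $f$ is also invariant under finite changes, it must be constant on $\infsub{H}$. The paper then finds $y\subseteq H$ with $\encfunc(H)-\encfunc(y)$ irrational, contradicting constancy.

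You instead build one explicit non-Ramsey set $Z\in\Gamma$ from a parity colouring. This adapts the classical argument that a selector for almost-equality gives a non-Ramsey set. The odd base $3$ is what makes the parity homomorphism on $\mathbb Z[1/3]$ well defined. Choosing base points in the $\mathbb Z[1/3]$-cosets of $\bbq$ from a fixed enumeration handles the fact that the Vitali relation, pulled back along $r$, is coarser than almost-equality, and it needs no choice.

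What your route buys: it uses the Ramsey property of a single set and only closure under continuous preimages and countable unions. It avoids the continuity-on-a-cube lemma, so the theorem holds in plain $\mathsf{ZF}$; your suspicion that $\mathsf{DC}_{\mathbb R}$ is unnecessary is right. What the paper's route buys: it shows that every $\Gamma$-measurable map invariant under finite changes is constant on a cube. That applies whenever classes are separated by such a map, with no need for an algebraic parity invariant. Your Step 2 is the paper's Proposition~\ref{prop:hamel-vitali}.

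On your ``main obstacle'': no recoding into $2^\omega$ or $\omega^\omega$ is needed. $\mathbb R$ is clopen in itself, so it lies in $\Gamma$. The rays $(-\infty,h_0)$ and $(h_0,\infty)$ are continuous images of $\mathbb R$, so $\mathbb R\setminus\{h_0\}\in\Gamma$, and hence $H\setminus\{h_0\}\in\Gamma$ by closure under intersections. The maps $x\mapsto r(x)+q$ and $(h_1,\dots,h_k)\mapsto\sum_i q_ih_i$ are already continuous on the given spaces. The paper passes over this point silently.
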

	
	This result is somewhat unexpected, as Vitali sets and Hamel bases are typically connected to the failure of regularities in the sense of measure and category on the one-dimensional real line, while the Ramsey property serves as a form of regularity in the hypergraph $ [\mathbb{N}]^\omega $, which is a zero-dimensional space. It demonstrates that the Ramsey property, despite its combinatorial nature, can also have implications for the structure of the real line itself, beyond the zero-dimensional combinatorial objects.
	
	Our third main result concerns MID families. An independent family is a collection of subsets of $[\mathbb N]^\omega$ such that any finite Boolean combination of distinct sets from the family is infinite \cite{miller1989infinite}. A MID family is a maximal such family. We prove that:
	
	\begin{thm}[$\mathsf{ZF} + \mathsf{CC}_{\mathbb R}$]
		Let $\Gamma$ be a good pointclass. If every set in $\Gamma$ satisfies any of the Lebesgue measurability, Baire property, or the Ramsey property, then $\Gamma$ contains no MID family.
	\end{thm}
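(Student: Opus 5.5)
Fix an infinite independent family $\mathcal A\subseteq\infsub{\bbn}$ with $\mathcal A\in\Gamma$. The plan is to extract from $\mathcal A$ a set in $\Gamma$ that none of the three regularity properties can tolerate. For a finite partial function $s$ from $\mathcal A$ to $2$, write $\mathcal A^s=\bigcap_{A\in\dom s}A^{s(A)}$, where $A^1=A$ and $A^0=\bbn\setminus A$. Independence says every $\mathcal A^s$ is infinite. Maximality says that for every $X\in\infsub{\bbn}$ there is an $s$ (with $\dom s$ possibly including $X$'s witnesses) such that either $\mathcal A^s\cap X$ or $\mathcal A^s\setminus X$ is finite.

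First I will use $\mathsf{CC}_\bbr$ to fix a countable sequence $A_0,A_1,\dots$ of distinct members of $\mathcal A$, and set $\mathcal A'=\mathcal A\setminus\{A_n:n\in\omega\}$. Each $A_n$ is a real, so removing countably many reals keeps $\mathcal A'$ in $\Gamma$, since $\Gamma$ is closed under intersection with co-countable (Borel) sets.

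Next, consider the set $\mathcal X$ of those $X\in\infsub{\bbn}$ (or $X\in 2^\omega$) that are \emph{not} independent over $\mathcal A'$. That is, $X\in\mathcal X$ iff there exist finitely many distinct $B_1,\dots,B_k\in\mathcal A'$ and a pattern $s$ with $\mathcal A'^s\cap X$ or $\mathcal A'^s\setminus X$ finite. This is an existential projection over finite sequences from $\mathcal A'$ of a Borel condition, so by closure under finite products, continuous images and countable unions we get $\mathcal X\in\Gamma$.

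The key step is to show that $\mathcal X$ has none of the regularity properties, using the free elements $A_n$ and maximality. Maximality of $\mathcal A$ together with independence gives the following. If $X\notin\mathcal X$, then any witness to non-independence of $X$ over $\mathcal A$ must involve some $A_n$. Moreover, $\mathcal X$ is invariant under finite modifications. I also expect $\mathcal X$ to be invariant under the maps $X\mapsto X\triangle C$ for $C$ in the Boolean algebra generated by $\mathcal A'$ together with finite sets, at least restricted to suitable pieces.

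Conversely, each $A_n$ and each Boolean combination of the $A_n$'s lies outside $\mathcal X$ (by independence of $\mathcal A$), while its complement also lies outside. Its "flip" on a set $\mathcal A^s$ lies inside $\mathcal X$. I will try to show that $\mathcal X$ and its complement are both dense in a homogeneous way: every basic open set, and every Mathias/Ellentuck condition $[a,B]$, meets both. The point is that $B$ can be split via a Boolean combination of the $A_n$'s into infinite pieces, and points realizing independence and non-independence can both be found inside $[a,B]$. Then a $0$–$1$ law finishes each case. For Baire property, $\mathcal X$ is a tail set invariant under a group acting with dense orbits (flips by finite sets and by $A_n$-combinations), hence meager or comeager, and symmetry between $X$ and its image under flips swapping $\mathcal X$ pieces gives a contradiction. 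The measure case is the same with Lebesgue density. For the Ramsey case I will use that $\mathcal X$ is closed under complements ($X\in\mathcal X\iff\bbn\setminus X\in\mathcal X$) and under finite changes. Given homogeneous $\infsub{B}$, I will choose $B'\subseteq B$ realizing the opposite membership by intersecting $B$ with $A_n$-patterns, using independence and maximality of $\mathcal A$ to control membership.

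The main obstacle is this last step: producing inside an arbitrary $\infsub{B}$ both members and nonmembers of $\mathcal X$. This requires a careful case analysis on whether $B$ itself is almost contained in some $\mathcal A'^s$, handled by passing to a further subset inside a maximal-pattern cell.
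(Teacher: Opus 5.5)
There is a genuine gap: $\mathcal X$ does not witness a failure of regularity, so your key step cannot be carried out.

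\emph{The Ramsey case.} Here $\mathcal X$ is provably Ramsey. For any $C\in\mathcal A'$ and any $X\in[\mathbb N]^\omega$ with $X\subseteq C$, the one-element pattern $s(C)=0$ gives $\mathcal A'^s=\mathbb N\setminus C$ and $\mathcal A'^s\cap X=\emptyset$. So every infinite subset of $C$ lies in $\mathcal X$, and $C$ is homogeneous. More generally, suppose $B\subseteq^*\mathcal A'^s$ for some nonempty pattern $s$. Then every infinite subset of $B$ is almost disjoint from the other cells of that pattern, so all of them lie in $\mathcal X$. The case you planned to handle by ``passing to a further subset inside a maximal-pattern cell'' is exactly the case in which no non-member exists.

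\emph{Baire property and measure.} The $0$--$1$ law only says that the tail set $\mathcal X$ is meager or comeager (null or conull). Density plus codensity is compatible with either: the finite sets form a dense, codense, meager, null tail set. You would need both $\mathcal X$ and its complement to be non-meager (of positive outer measure). The symmetry you invoke to get this does not exist. Flips by $\mathcal A'$-combinations preserve $\mathcal X$, while flips by $A_n$-combinations neither preserve nor swap it. For $B\in\mathcal A'$ we have $B\in\mathcal X$ but $B\triangle A_0\notin\mathcal X$, whereas $B\cap A_0$ and $(B\cap A_0)\triangle A_0=A_0\setminus B$ both lie in $\mathcal X$.

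\emph{Why no argument about $\mathcal X$ alone can work.} $\mathcal X$ only records dependence over $\mathcal A'$, which is not maximal, so it carries no contradiction. Running the paper's argument with $\mathcal A'$ in place of $\mathcal A$ shows that $\mathcal X$ is a countable union of proper filters and ideals in $\Gamma$. Under each hypothesis it is therefore meager (resp.\ null), which is perfectly consistent with being a tail set. Maximality of the whole family $\mathcal A$ must be used to cover all of $\mathcal P(\mathbb N)$, and that is the missing idea.

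\emph{What the paper does.} It ranges over pairs $U,V$ of disjoint finite unions of basic open subsets of $[\mathbb N]^\omega$, a countable collection. It lets $\mathcal F(U,V)$, resp.\ $\mathcal I(U,V)$, be the set of $x$ with $\sigma(F,G)\setminus x$, resp.\ $\sigma(F,G)\cap x$, finite for some finite $F\subseteq U\cap\mathcal A$ and $G\subseteq V\cap\mathcal A$. Then:
\begin{itemize}
\item Disjointness of $U$ and $V$ keeps $F_1\cup F_2$ and $G_1\cup G_2$ disjoint, so witnesses can be merged and these are a filter and an ideal.
\item Independence makes them proper.
\item Maximality makes them cover $\mathcal P(\mathbb N)$.
\item They lie in $\Gamma$.
\end{itemize}
Sierpi\'nski's theorem says a filter with the Baire property (resp.\ a measurable filter) is meager (resp.\ null). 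The Mathias--Talagrand fact says the Ramsey property for $\Gamma$ makes every filter in $\Gamma$ meager. Together with $\mathsf{CC}_{\mathbb R}$ for the countable union, these make $\mathcal P(\mathbb N)$ meager or null, which is the contradiction.
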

	
	\label{intro:lebesguemad}This result is notable for its uniformity, in contrast to the situation for MAD families. In Solovay's model, every set is Lebesgue measurable and there are no MAD families, while it has also been proved in \cite{horowitz2017madness} and \cite[Example 14.3.6]{larson2020geometric} that there is a model of $ \mathsf{ZF} + \mathsf{DC} $ where a MAD family exists and all sets of reals are Lebesgue measurable. This means that MID families are no less destructible than MAD families against classical regularity.
	
	Our fourth main result extends this investigation to generalized MAD notions that have recently attracted significant attention in the literature. We prove that:
	\begin{thm}[$\mathsf{ZF} + \mathsf{CC}_{\mathbb R}$]
		Let $\Gamma$ be a good pointclass such that every set in $\Gamma$ has the Ramsey property. Then $\Gamma$ contains no $\mathcal{ED}$-MAD families, no $\mathcal{ED}_{\mathrm{fin}}$-MAD families, and no $\finalpha{\alpha}$-MAD families for any $\alpha<\omega_1$.
	\end{thm}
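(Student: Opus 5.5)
The plan is to reduce the statement to the mechanism behind Theorem~\ref{theorem:introduction}, not to redo the argument three times. I would first isolate an abstract version of the MAD argument for an analytic ideal $\mathcal I$ on a countable set $D$. It should use only two ingredients: a continuous coding map $\encfunc\colon\infsub{\bbn}\to\mathcal I^+$, and a ``Ramsey-compatibility'' condition relating the combinatorics of $\infsub{B}$ to $\mathcal I$-positive sets.

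Concretely, the abstract lemma would assume two conditions.
\begin{enumerate}[label=(\roman*), font=\upshape]
\item \emph{Monotonicity.} $\encfunc$ is monotone modulo $\mathcal I$: if $x\subseteq y$, then $\encfunc(x)\setminus\encfunc(y)\in\mathcal I$.
\item \emph{Shrinking.} For every $Y\in\mathcal I^+$ there is $B\in\infsub{\bbn}$ such that every $x\in\infsub{B}$ has an infinite $x'\subseteq x$ with $\encfunc(x')\subseteq Y$ modulo $\mathcal I$.
\end{enumerate}
The conclusion would be: if $\Gamma$ is good, every set in $\Gamma$ is Ramsey, and $\mathcal A\in\Gamma$ is $\mathcal I$-MAD, we get a contradiction.

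The proof of this lemma follows the MAD case verbatim, with ``infinite intersection'' replaced by ``$\mathcal I$-positive intersection''. Let $X$ be the set of $x$ such that the two interleaved halves of $x$ code $\mathcal I$-positive sets that are $\mathcal I$-positively captured by the \emph{same} element of $\mathcal A$ (respectively, by distinct elements). This set $X$ is defined from $\mathcal A$, $\mathcal I$ and $\encfunc$ by countable unions and intersections, products, and continuous images and preimages. Since $\mathcal I$ is analytic and $\Gamma$ is good, $X\in\Gamma$. Apply the Ramsey property to get a homogeneous $B$. Then use maximality of $\mathcal A$, applied to $\encfunc(B)$ together with the shrinking condition, to refute both alternatives. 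Here $\mathsf{CC}_{\bbr}$ is used exactly as in Theorem~\ref{theorem:introduction}, to choose countably many witnesses from $\mathcal A$.

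The second step is to verify the hypotheses for each ideal.
\begin{itemize}
\item For $\evendiff$ on $\bbn\times\bbn$, I would let $\encfunc(x)$ be the union of the full vertical columns indexed by $x$. A set is $\evendiff$-positive iff it is not covered by finitely many columns together with finitely many graphs of functions. Monotonicity is clear. For shrinking, a positive $Y$ either meets infinitely many columns in infinite sets, or contains an infinite partial ``$k$-branching'' pattern for every $k$. In either case a diagonal thinning gives the required $B$.
\item For $\mathcal{ED}_{\mathrm{fin}}$, restrict to $\Delta=\{(n,m):m\le n\}$ and code by columns truncated to $\Delta$.
\item For $\finalpha{\alpha}$, I would proceed by induction on $\alpha<\omega_1$, using the Fubini structure $\finalpha{\alpha+1}=\mathrm{Fin}\otimes\finalpha{\alpha}$. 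At limit stages, $\finalpha{\lambda}$ is the $\mathrm{Fin}$-sum of $\finalpha{\alpha_n}$ along a fixed cofinal sequence $\alpha_n\nearrow\lambda$. In both cases $\encfunc$ is the union of the fibers indexed by $x$. The shrinking condition is obtained from the inductive hypothesis by choosing, fiberwise, shrinking witnesses $B_n$ and then diagonalising.
\end{itemize}
The fiberwise choice of the $B_n$ needs countable choice. It is available since the relevant objects are reals, and I would make it canonical, for instance by taking the least witness in a fixed recursive presentation of the construction, to avoid any hidden use of $\mathsf{DC}$.

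I expect the main obstacle to be the shrinking condition for $\finalpha{\alpha}$ at limit $\alpha$ (and, to a lesser degree, for $\evendiff$). There, $\mathcal I$-positive sets can be spread thinly across fibers of unboundedly high rank. A single $B$ must then work uniformly for all $x\in\infsub{B}$, which requires a fusion-type diagonal argument across the fibers rather than a simple thinning. My first attempt would be to strengthen the inductive hypothesis to a ``hereditary'' shrinking property. This version would produce, for each positive $Y$, a tree of witnesses closed under restriction to tails, so that diagonal intersections at limit stages remain witnesses.
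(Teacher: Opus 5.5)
There is a genuine gap, and it lies in the abstract lemma itself. Hypotheses (i) and (ii) do not lead to a contradiction; they produce a homogeneous set.

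Let $\mathcal A$ be $\mathcal I$-MAD and pick $A\in\mathcal A$ with $Y=\encfunc(\mathbb N)\cap A\in\mathcal I^+$. Take $B$ from shrinking and apply shrinking to $x=B$ itself. This gives an infinite $B'\subseteq B$ with $\encfunc(B')\setminus A\in\mathcal I$. By monotonicity, every infinite $w\subseteq B'$ has $\encfunc(w)\setminus A\in\mathcal I$. Since $\encfunc(w)\in\mathcal I^+$ and distinct members of $\mathcal A$ are $\mathcal I$-almost disjoint, $A$ is the \emph{only} member of $\mathcal A$ meeting $\encfunc(w)$ positively. So both interleaved halves of every $z\in[B']^\omega$ are captured by $A$ alone. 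Hence $[B']^\omega\subseteq X$ in the same-element version, and $[B']^\omega\cap X=\emptyset$ in the distinct-elements version: $X$ is Ramsey. For $\mathcal I=\mathrm{Fin}$ and $\encfunc=\mathrm{id}$ this is just the observation that any infinite subset of a member of $\mathcal A$ is homogeneous.

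Your three concrete codings are monotone and independent of $\mathcal A$, so the objection applies to them verbatim. They also fail shrinking. For $\mathcal{ED}$ take $Y=\Delta$: infinitely many full columns minus $\Delta$ still has infinitely many infinite columns. For $\mathrm{Fin}^2$ take $Y=\{\langle n,m\rangle: m \text{ even}\}$.

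Your two-step architecture (an abstract criterion, then verification per ideal) matches the paper's. The criterion has to be different, though: the coding operator must be built from $\mathcal A$ and must \emph{fail} to be monotone.

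The paper proceeds as follows.
\begin{itemize}
\item It fixes a sequence $\langle A_n:n\in\mathbb N\rangle$ of distinct members of $\mathcal A$. This is exactly where Dedekind-infiniteness, and hence $\mathsf{CC}_{\mathbb R}$, is used; you mention such witnesses but never feed them into the code.
\item It shrinks them to pairwise disjoint domains in $\mathcal{ED}^{++}$ or $\mathrm{Fin}^{\alpha++}$ form.
\item It lets some entries of $x$ choose a member $A_{x(\cdot)}$ and the following entries choose points inside it, as in $\Phi(x)=\{A_{x(2n)}(x(2n+1)):n\in\mathbb N\}$.
\end{itemize}
Since $\Phi(x)$ meets each $A_n$ in an $\mathcal I$-small set, any $A\in\mathcal A$ capturing $\Phi(x)$ is distinct from, hence $\mathcal I$-almost disjoint from, every $A_n$. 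Passing to $y\subseteq x$ re-pairs the entries, so one can then choose positions in $A_n\setminus A$ and escape $A$. This gives the two conditions of Definition~\ref{def:dichotomous-coding}: below every $H$ one can hide $\Phi(y)$ in some $A$, and below every $H$ some $A$ splits $\Phi(y)$ positively. Proposition~\ref{prop:ramseyimplynoedmad} then shows that $P=\{z:\exists A\in\mathcal A\ [\Phi(z)\setminus A\in\mathcal I]\}$ is not Ramsey.

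For $\mathcal{ED}$, $\mathcal{ED}_{\mathrm{fin}}$ and $\mathrm{Fin}^\alpha$, the real work is designing such a continuous $\Phi$. Proposition~\ref{prop:eddichotomous} uses triangular index blocks, and Proposition~\ref{prop:findichotomous} uses the sequence code $\pi$ on $\mathrm{Fin}^{\alpha++}$ trees, uniformly in $\alpha$. No induction on $\alpha$ or fusion across fibers is involved.
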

	
	These generalizations are not straightforward, as each class represents a distinct combinatorial structure with its own properties. $\finalpha{\alpha}$-MAD families generalize the notion of almost disjointness using ideals of the form $\finalpha{\alpha}$ \cite{katvetov1968products}, while $\mathcal{ED}_{\mathrm{fin}}$-MAD and $\mathcal{ED}$-MAD families arise from the study of the eventually different ideal and its variants \cite{bartoszynski1995set}.
	
	The case of $\finalpha{\alpha}$-MAD families was previously proved in \cite{schrittesser2020ramsey} under the additional hypothesis $\unif$, where the definition of $\mathcal I$-MAD families for arbitrary ideals $\mathcal I$ on $\mathbb N$ is also given. Our result improves upon \cite{schrittesser2020ramsey} in two respects: we eliminate the need for the extra assumption $\unif$, and our proof is considerably simpler.

	\subsection{Overview}
	
	The structure of the paper is as follows. Section \ref{section:prelim} introduces the notation and reviews the basic definitions, including the Ramsey property, MAD families, MID families, Vitali sets, Hamel bases, and other relevant concepts. In Section \ref{section:conjecture}, we prove Mathias' conjecture. Section \ref{section:vitali} investigates the relationship between the Ramsey property and Vitali sets. Section \ref{section:mid} examines the connection between regularity properties and MID families. Section \ref{section:imad} studies the interplay between the Ramsey property and $\mathcal I$-MAD families. Finally, in Section \ref{section:problem}, we discuss some open problems and directions for future research.

	\section{Preliminaries}{\label{section:prelim}}
	
	In this section, we recall the necessary notation, definitions, and basic facts that will be used throughout the paper.

	\subsection{Notation}
	
	For any set $X$, we write $[X]^\omega$ for the collection of all infinite subsets of $X$,  $[X]^{<\omega}$ for the collection of all finite subsets of $X$, and $X^{<\omega}$ for the set of all finite sequences from $X$. Also, let $ \inffunc{X} $ be the set of countably infinite sequences $ \la x_0, \cdots, x_n,\cdots \ra$ of $ X $. For $ s\in \finfunc{X} $, define $ |s| $ as the length of $ s $. For a finite index set $ I=\{i_0,\cdots,i_k\} $ where $i_0<\cdots< i_k<|s| $, define $ s\uphar I $ as the restriction of $ s $ to $ I $, i.e. $ \la s(i_0), \cdots, s(i_k)\ra $.
	
	Let $\bbn$ be the set of non-negative integers. For $ a,b\in\bbn $, define $ [a,b] $ as $ \{n\in\bbn: a\leq n \leq b\} $, and let $ [a,b) $ be $ \{n\in\bbn: a\leq n < b\} $. We equip $[\mathbb N]^\omega$ with the subspace topology inherited from the product topology on $\mathcal P(\mathbb N) \cong \{0,1\}^{\mathbb N}$ via characteristic functions. 
	Equivalently, a basic open set in $[\mathbb N]^\omega$ is determined by a finite segment: for any finite sequence $s = \la s_0,\dots,s_{k-1}\ra$ with $s_0 < \cdots < s_{k-1}$, the set
	\[
	U_s = \{ x \in [\mathbb N]^\omega : x(0)=s_0,\dots,x(k-1)=s_{k-1} \}
	\]
	is basic open. This topology makes $[\mathbb N]^\omega$ a Polish space.
	
	For a Polish space $ X $, we also let
	\[
	K(X):=\{K\subseteq X: K \ \text{is compact}\} 
	\]
	denote the Polish space of compact subsets of $ X $ equipped with the Vietoris topology. The subbasic open sets are
	\[
	\{K\in K(X): K\subseteq U\}, \ \{K\in K(X): K \cap V\not= \emptyset\},
	\]
	where $ U $ and $ V $ are open subsets of $ X $. 
	
	For any $x\in[\mathbb N]^\omega$, we denote by $x(k)$ the $k$-th element of $x$ in increasing order, so that $x = \{x(0) , x(1) , x(2) , \cdots\}$ where $ x(0) < x(1) < x(2) < \cdots $.
	
	For sequences $s,t \in \mathbb{N}^{<\omega}$, we write $s^{\frown} t$ for their concatenation. When $n \in \mathbb{N}$, we identify $n$ with the sequence $\langle n \rangle$, and write $n^{\frown} s$ for $\langle n \rangle^{\frown} s$.
	
	We now introduce the tree sets that will serve as domains for higher-dimensional ideals. For each countable ordinal $\alpha < \omega_1$ with $\alpha \ge 1$, fix a non-decreasing sequence $\langle \gamma_n^\alpha : n \in \mathbb{N} \rangle$ of ordinals less than $\alpha$ such that $\bigcup_{n \in \mathbb{N}} (\gamma_n^\alpha + 1) = \alpha$ as follows. For successor ordinals $\alpha = \beta+1$ we take $\gamma_n^\alpha = \beta$ for all $n$, and for limit ordinals $\alpha$ we take $\gamma_n^\alpha \geq 1$ to be strictly increasing and cofinal in $\alpha$.
	
	For $ \alpha <\omega_1 $, the sets $S_\alpha \subseteq \mathbb{N}^{<\omega}$ are defined recursively as follows:
	\begin{itemize}
		\item[(1)] Define $ S_0 = \{\emptyset\} $.
		\item[(2)] For $\alpha \geq 1$, set
		\[
		S_\alpha = \{ n^{\frown} s : n \in \mathbb{N},\ s \in S_{\gamma_n^\alpha} \}.
		\]
	\end{itemize}
	Each $S_\alpha$ is countable and hence can be identified with $\mathbb{N}$ via a fixed bijection. In particular, $ S_1=\bbn $.
	
	For any $X \subseteq S_\alpha$, we introduce the following notation. For any finite sequence $s \in \mathbb{N}^{<\omega}$, let
	\[
	X(s) = \{ t \in \mathbb{N}^{<\omega} : s^{\frown} t \in X \}
	\]
	denote the \textbf{section} of $X$ at $s$. In particular, $X(\emptyset) = X$, and $ X(s) = \{\emptyset\} $ for $ s\in X $. We denote $X(\langle n \rangle) $ by $ X(n)$ for any $ n\in\bbn $.
	
	Let 
	\[
	\mathrm{dom}(X) = \{ n \in \bbn : X( n ) \neq \emptyset \}
	\]
	be the \textbf{domain} of $ X $, i.e. the set of indices with nonempty section. Note that for $ X\subseteq S_1 = \bbn $, we can identify $ \dom(X) = X $, since in this situation $ n\in X $ if and only if $ X(n)\not=\emptyset $.
	
	For any $ X\subseteq S_\alpha $, the set of initial segments \[
	T(X):= \{s\in \finfunc{\bbn} : \exists t\in X [s\subseteq t]\}
	\]
	is a well-founded tree, which admits a \textbf{rank function} as in \cite[Section~2.E]{kechris2012classical}. We denote the rank function of $ T(X) $ by $ \rho_X $, where for any $ s\in X $, $ \rho_X(s) = 0 $ and for any $ s\in T(X)\setminus X$ we have
	\[
	\rho_X(s) = \sup\{\rho_X(s^\frown n)+1: n\in \dom (X(s))\}.
	\]
	Also, we let $ \rho(X) = \sup\{\rho_X(s)+1: s\in T(X)\}$ denote the rank of the tree $ T(X) $.
	
	Parallel to the rank function, we can also consider the following notation recording the ordinal path of a sequence.
	
	For any $ s\in T(S_\alpha)$, denote $ \gamma(\alpha,\emptyset) = \alpha$ and
	\[
	\gamma(\alpha, s) = \gamma_{s(|s|-1)}^{\cdots^{\gamma_{s(0)}^\alpha}},
	\]
	so iteratively $ \gamma(\alpha, s^\frown \langle n\rangle ) = \gamma_{n}^{\gamma(\alpha, s)} $, and $ \gamma(\alpha,s^\frown t) = \gamma(\gamma(\alpha,s),t) $.
	
	The following are some useful facts whose proofs are omiited.
	
	\begin{fact}
		\leavevmode
		\begin{enumerate}[label=(\arabic*), font=\upshape]
			\item For any $ t\in S_\alpha $, we have $ \gamma(\alpha,t) =0$, and if moreover $ |t|\geq 1 $, we have $ \gamma(\alpha, t\uphar (|t|-1)) =1$.
			\item For any $ X\subseteq S_\alpha $ and $ t\in T(X)$, we have $ X(t)\subseteq S_{\gamma(\alpha, t)} $. So for any $ t\in X\subseteq S_\alpha $ with $ |t|\geq 1 $, $ X(t\uphar (|t|-1)) \subseteq \bbn$. In particular, for any $ t\in T(X) $ with $ \rho_X(t)=1 $, we have $ X(t)\subseteq \bbn $.
		\end{enumerate}
	\end{fact}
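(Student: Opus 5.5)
Both parts are proved by well-founded induction along the trees, using the recursive definitions of $S_\alpha$, $\gamma(\alpha,s)$ and the rank function $\rho_X$ on $T(X)$.

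\emph{Part (1).} I will prove by induction on $\alpha\ge 0$ the stronger statement that for all $t\in S_\alpha$ and every proper initial segment $u\subsetneq t$,
\[
\gamma(\alpha,t)=0 \quad\text{and}\quad \gamma(\alpha,u)\ge 1 .
\]
The base case $\alpha=0$ is trivial, since $S_0=\{\emptyset\}$ and $\gamma(0,\emptyset)=0$.

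For $\alpha\ge 1$, any $t\in S_\alpha$ has the form $n^\frown s$ with $s\in S_{\gamma^\alpha_n}$. By the iteration rule,
\[
\gamma(\alpha,t)=\gamma(\gamma(\alpha,\langle n\rangle),s)=\gamma(\gamma^\alpha_n,s).
\]
Since $\gamma^\alpha_n<\alpha$, the induction hypothesis gives $\gamma(\gamma^\alpha_n,s)=0$. For proper initial segments:
\begin{itemize}
\item $u=\emptyset$ gives $\gamma(\alpha,\emptyset)=\alpha\ge1$;
\item otherwise $u=n^\frown u'$ with $u'\subsetneq s$, so $\gamma(\alpha,u)=\gamma(\gamma^\alpha_n,u')\ge 1$ by induction.
\end{itemize}

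Now let $|t|\ge1$ and put $u=t\uphar(|t|-1)$ and $m=t(|t|-1)$. We have $\gamma(\alpha,u)=\beta\ge1$ and $\gamma^\beta_m=\gamma(\alpha,t)=0$. The chosen sequences satisfy:
\begin{itemize}
\item for a limit $\beta$, $\gamma^\beta_m\ge1$;
\item for $\beta=\delta+1$, $\gamma^\beta_m=\delta$.
\end{itemize}
Hence $\beta$ must be a successor with $\delta=0$, that is, $\beta=1$. The only real care needed is this bookkeeping: the convention $\gamma^\beta_n\ge 1$ for limit $\beta$ is exactly what rules out the limit case.

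\emph{Part (2).} I will first show $S_\alpha(t)=S_{\gamma(\alpha,t)}$ for $t\in T(S_\alpha)$, by induction on $|t|$. For $t=\emptyset$ this is immediate. For the inductive step, $S_\alpha(n^\frown t')=S_{\gamma^\alpha_n}(t')$ by the definition of $S_\alpha$ together with uniqueness of the decomposition $n^\frown s$. Then monotonicity of sections gives $X(t)\subseteq S_\alpha(t)=S_{\gamma(\alpha,t)}$.

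The first consequence combines this with (1): for $t\in X$ with $|t|\ge1$, we get $X(t\uphar(|t|-1))\subseteq S_1$. Here the definition gives $S_1=\{\langle n\rangle: n\in\bbn\}$, which the paper identifies with $\bbn$.

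For the last claim, $\rho_X(t)=1$ means $t\notin X$ and every $t^\frown n$ with $n\in\dom(X(t))$ has rank $0$, that is, $t^\frown n\in X$. Two things then follow:
\begin{itemize}
\item every element of $X(t)$ is a length-one sequence $\langle n\rangle$, since any longer extension in $T(X)$ would pass through some $t^\frown n\in X$, and $X\subseteq S_\alpha$ is an antichain because $\gamma=0$ at members by (1);
\item $X(t)\subseteq\bbn$ under the identification.
\end{itemize}
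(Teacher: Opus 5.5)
Your proof is correct and complete. The paper gives no argument to compare against: it lists this Fact among "useful facts whose proofs are omitted." Your three steps are exactly the routine verifications left to the reader:
\begin{enumerate}
\item the strengthened induction on $\alpha$, which gives $\gamma(\alpha,t)=0$ on $S_\alpha$ and $\gamma(\alpha,u)\ge 1$ on proper initial segments, and so also shows that $\gamma(\alpha,\cdot)$ is well defined on $T(S_\alpha)$, something the paper tacitly assumes;
\item the identity $S_\alpha(t)=S_{\gamma(\alpha,t)}$, proved by induction on $|t|$ uniformly in $\alpha$;
\item the use of the convention $\gamma^\beta_n\ge 1$ for limit $\beta$, which is what forces $\gamma(\alpha,t\upharpoonright(|t|-1))=1$.
\end{enumerate}

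One small economy is available in the last clause. From $\rho_X(t)=1$ you get some $n$ with $t^\frown n\in X$. Then $t=(t^\frown n)\upharpoonright(|t^\frown n|-1)$ for a member of $X$, so the preceding sentence of (2) applies directly and the antichain observation is not needed.
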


	\subsection{Definitions}
	
	\begin{defn}
		A relation is total if its domain is full. Now we define some well-known choice principles as follows. 
		\begin{itemize}
			\item[(1)] Let $\mathsf{CC}_{\mathbb{R}}$ be the assertion that every countable family of nonempty subsets of $\mathbb{R}$ has a choice function.
			\item[(2)] Let $\mathsf{DC}_{\mathbb{R}}$ be the assertion that for every nonempty $X \subseteq \mathbb{R}$ and every total relation $R \subseteq X \times X$, there exists an infinite sequence $\langle x_n: n\in \bbn \rangle$ with $x_n \mathrel{R} x_{n+1}$ for all $n$.
		\end{itemize}
	\end{defn}
	
	It is well known that $\mathsf{DC}_{\mathbb{R}}$ implies $\mathsf{CC}_{\mathbb{R}}$, but the converse does not hold in $ \mathsf{ZF} $. Also, recall that a set $X$ is infinite if $X$ is not bijective to any natural number, and is \textbf{Dedekind-infinite} if there is an injection from $\mathbb{N}$ to $X$. Clearly, a Dedekind-infinite set is an infinite set. On the other hand, $\mathsf{CC}_{\mathbb{R}}$ implies that infinite subsets of $\mathbb{R}$ are Dedekind-infinite. 
	
	\begin{defn}
		A set $X \subseteq [\mathbb N]^\omega$ is said to have the \textbf{Ramsey property} if there exists an infinite set $H \in [\mathbb N]^\omega$ such that either $[H]^\omega \subseteq X$ or $[H]^\omega \cap X = \emptyset$. We say that $H$ is homogeneous for $X$.
	\end{defn}
	
	Note that for any $A\in\infsub{\bbn}$, we can define Ramsey property of subsets in $\infsub{A}$ analogous to $\infsub{\bbn}$. Let $\hat{A}$ be the unique strictly increasing function from $\bbn$ onto $A$. The bijection $[\hat{A}]: \infsub{\bbn}\rtar \infsub{A}$ which sends $ z $ to $ \hat{A}''z $ preserves Ramsey property in both directions.
	
	\begin{defn}
		For any $a\in \finsub{\bbn}, A\in\infsub{\bbn}$ such that $\max a < \min A$, we define $[a,A] = \{B\in \infsub{\bbn}: a\subseteq B\subseteq a\cup A\}$. A set $X \subseteq  [\mathbb N]^\omega$ is \textbf{completely Ramsey} if for every such $a$ and $A$, there exists $H \in [A]^\omega$ such that either $[a, H] \subseteq X$ or $[a, H] \cap X = \emptyset$. We say that $[a,H]$ is homogeneous for $X$.
	\end{defn}
	
	\begin{defn} A \textbf{good pointclass} is a collection $\Gamma$ of subsets of Polish spaces that contains all clopen sets and is closed under countable intersections, finite products, countable unions, continuous images, and continuous preimages. A function $f: X \to Y$ between Polish spaces is said to be \textbf{$\Gamma$-measurable} if for every open set $U \subseteq Y$, the preimage $f^{-1}[U]$ belongs to $\Gamma$. 
	\end{defn}	
	
	Note that $\Gamma$ may contain subsets of different Polish spaces simultaneously. When we speak of the pointclass $\Gamma$ having the Ramsey property, we mean that every set in $ \Gamma\uphar \infsub{\bbn} $ has the Ramsey property. When we say a set in $ \Gamma $ is Lebesgue measurable, for our purpose, we mean that the set is a subset of $ \bbr $ or $ \prst(\bbn) $.
	
	\begin{defn}
		An \textbf{almost disjoint family} is a collection $\mathcal A\subseteq[\mathbb N]^\omega$ such that $A\cap B$ is finite for all distinct $A,B\in\mathcal A$. An almost disjoint family $\mathcal A$ is called \textbf{maximal} $($or a \textbf{MAD family}$)$ if for every $x\in[\mathbb N]^\omega$, there exists $A\in\mathcal A$ such that $x\cap A$ is infinite.
	\end{defn}
	
	Equivalently, $\mathcal A$ is MAD if it is an almost disjoint family and is not properly contained in any other almost disjoint family.
	
	\begin{defn}
		A \textbf{Vitali set} is a subset $V\subseteq\mathbb{R}$ such that for every $x\in\mathbb{R}$ there exists a unique $v\in V$ with $x-v\in\mathbb{Q}$.
	\end{defn}
	
	\begin{defn}
		A \textbf{Hamel basis} is a basis for $\mathbb{R}$ as a vector space over $\mathbb{Q}$. That is, a set $H\subseteq\mathbb{R}$ such that:
		\begin{itemize}
			\item[(1)] Any finite $\mathbb{Q}$-linear combination of elements in $H$ that equals zero must have all coefficients zero;
			\item[(2)] Every $x\in\mathbb{R}$ can be expressed as a finite $\mathbb{Q}$-linear combination of elements in $H$.
		\end{itemize}
	\end{defn}
	
	For any non-empty subset $ B\subseteq \bbr $, we also define its \textbf{span} in $ \bbr $ to be the set of all finite $\mathbb{Q}$-linear combinations, and denote it by $ \operatorname{span}(B) $.
	
	\begin{defn}
		A family $\mathcal A \subseteq \mathcal P(\mathbb N)$ is \textbf{independent} if for any two disjoint finite subfamilies $\mathcal F, \mathcal G \subseteq \mathcal A$, the Boolean combination
		\[\bigcap \mathcal F\setminus \bigcup \mathcal G\]
		is infinite. A \textbf{maximal independent $($MID$)$ family} is an independent family that is maximal under inclusion.
	\end{defn}

	\begin{defn}
		Let $S$ be a set. A collection $\mathcal I \subseteq \mathcal P(S)$ is called a (proper) \textbf{ideal} on $S$ if:
		\begin{itemize}
			\item[(1)] $\emptyset \in \mathcal I$ and $S \notin \mathcal I$;
			\item[(2)] If $A\in\mathcal I$ and $B\subseteq A$, then $B\in\mathcal I$;
			\item[(3)] If $A,B\in\mathcal I$, then $A\cup B\in\mathcal I$.
		\end{itemize}
		A (proper) \textbf{filter} on $S$ is a collection $\mathcal F \subseteq \mathcal P(S)$ such that its dual 
		\[
		\mathcal{F}^\ast = \{ S \setminus A : A \in \mathcal F \}
		\]
		is an  ideal on $S$. 
		
		For an ideal $\mathcal I$, we denote by $\mathcal I^+ = \mathcal P(S) \setminus \mathcal I$ the collection of $\mathcal I$-\textbf{positive} sets.
	\end{defn}	
	
	In this paper, for our purposes, all ideals are assumed to contain all finite subsets of their underlying set. This convention ensures that our ideals are proper and nontrivial.
	
	\begin{defn}Let $\mathcal I$ be an ideal on $\mathbb{N}$.
		\begin{itemize}
			\item[(1)] The sets $A,B\subseteq\mathbb{N}$ are \textbf{$\mathcal I$-almost disjoint} if $A\cap B\in\mathcal I$.
			\item[(2)] A family $\mathcal A\subseteq \idl{I}^+$ is an \textbf{$\mathcal I$-almost disjoint family} if its members are pairwise $\mathcal I$-almost disjoint.
			\item[(3)] An \textbf{$\mathcal I$-MAD family} is an $\mathcal I$-almost disjoint family that is maximal under inclusion: for every $X\in \idl{I}^+$, there exists $A\in\mathcal A$ such that $X\cap A \in \idl{I}^+$.
		\end{itemize}
	\end{defn}
	
	\begin{defn} The \emph{eventually different ideal} $\mathcal{ED}$ is an ideal on $\omega \times \omega$ defined by
		\[
		\mathcal{ED} = \left\{ A \subseteq \mathbb N \times \mathbb N : \exists n \ \forall m \ge n \left[ |\{k : (m,k) \in A\}| \le n \right] \right\},
		\]
		and $\mathcal{ED}_{\text{fin}}$ is defined as the restriction of $\mathcal{ED}$ to the set 
		\[
		\Delta = \{ (n,m) \in \mathbb N \times \mathbb N : m \le n \}.
		\]
	\end{defn}
	
	For $ A\subseteq \bbn\times \bbn$ or $ A \subseteq \Delta  $, and $ n\in\bbn $, we also denote $ A(n) = \{m: (n,m)\in A\} $ and
	\[
	\dom(A) =\{n\in \bbn: A(n)\neq \emptyset \}.
	\]
	
	\begin{defn}
		For each countable ordinal $\alpha < \omega_1$, we define the \textbf{iterated Fr\'echet ideal} $\mathrm{Fin}^\alpha$ on $S_\alpha$ recursively as follows.
		
		\begin{itemize}
			\item[(1)] For $\alpha = 0$, let
			\[
			\mathrm{Fin}^0 =\{\emptyset\}.
			\]
			
			\item[(2)] For $\alpha \ge 1$, a set $X \subseteq S_\alpha$ belongs to $\finalpha{\alpha}$ if and only if
			\[
			\{ n \in \mathbb{N} : X(n) \notin \finalpha{\gamma_n^\alpha} \} \text{~is ~finite}.
			\]
		\end{itemize}
		
		For example, $ \finalpha{1} $ is the set of all finite subsets of $ \bbn $. Note that for $ X\subseteq S_1 $, we have $ X(n)\in \finalpha{0} $ if and only if $ n\notin X $. For our specific purposes, we call a set \textbf{Fr\'echet small} if it is in the ideal $ \finalpha{\alpha} $ for some $ \alpha $.
	\end{defn}

	\section{Proof of Mathias' conjecture}{\label{section:conjecture}}
	
	We prove Theorem~\ref{theorem:introduction} in the following slightly more general form.
	
	\begin{thm}[$\zfaxiom$]\label{thm:mad-ramsey}
		Let $\Gamma$ be a good pointclass. If every set in $\Gamma$ has the Ramsey property, then there is no Dedekind-infinite MAD family in $\Gamma$.
	\end{thm}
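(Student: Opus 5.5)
The plan is to argue by contradiction. Suppose $\mathcal A\in\Gamma$ is a Dedekind-infinite MAD family. Fix an injection $n\mapsto A_n$ from $\bbn$ into $\mathcal A$; no choice is needed for this, since Dedekind-infiniteness provides it. Disjointify by setting $B_n=A_n\setminus\bigcup_{m<n}A_m$. Each $B_n$ is infinite, because $A_n\cap A_m$ is finite for $m<n$, and $B_m\cap A_n=\emptyset$ for $m>n$. Let $\mathcal A'=\mathcal A\setminus\{A_n:n\in\bbn\}$. Then
\[
\mathcal A'=\mathcal A\cap\bigcap_n\{A:A\neq A_n\}
\]
is a countable intersection of $\mathcal A$ with open sets, so $\mathcal A'\in\Gamma$.

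Next we build a continuous ``diagonal'' map $f:\infsub{\bbn}\rtar\infsub{\bbn}$. For $x\in\infsub{\bbn}$, let $f(x)$ consist, for each $k$, of the least element of $B_{x(2k)}$ that is $\ge x(2k+1)$. The values at different $k$ lie in the pairwise disjoint sets $B_{x(2k)}$, so they are distinct and $f(x)$ is infinite; $f$ is clearly continuous. Moreover $f(x)\cap A_n$ is finite for every $n$: points coming from $B_m$ with $m>n$ miss $A_n$, and only finitely many blocks have index $x(2k)\le n$. By maximality of $\mathcal A$, there is therefore some $A\in\mathcal A'$ with $A\cap f(x)$ infinite. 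This step is where the countably many fixed members $A_n$ are used to push all witnesses into $\mathcal A'$, which is what replaces $\unif$.

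For the coloring, split each $x$ continuously into two interleaved infinite parts $x^0,x^1$ (for instance alternate blocks of four consecutive elements) and set
\[
X=\{x\in\infsub{\bbn}:\exists A\in\mathcal A'\ [\,|A\cap f(x^0)|=|A\cap f(x^1)|=\aleph_0\,]\}.
\]
This is a projection of a $\Gamma$ set. The condition ``$A\cap y$ is infinite'' is $G_\delta$, and $\Gamma$ is closed under finite products, intersections and continuous images. Hence $X\in\Gamma$, and by hypothesis it has a homogeneous $H$.

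To exclude $[H]^\omega\cap X=\emptyset$, take any $z\in[H]^\omega$ and a witness $A\in\mathcal A'$ with $A\cap f(z)$ infinite. Infinitely many blocks of $z$ contribute points of $A$. Thin these blocks to a subsequence and distribute them alternately between the two halves; this gives $x\in[z]^\omega\subseteq[H]^\omega$ with $x\in X$. The care needed here is that sub-blocks of $z$ remain blocks of $x$, which is why blocks are kept as consecutive pairs (index, threshold).

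The main obstacle is excluding $[H]^\omega\subseteq X$. Here I would try to construct $x\in[H]^\omega$ whose two halves are forced to have different witnesses. First fix $x^0$ and a witness $A$ for $f(x^0)$. Then build $x^1$ inductively from $H$, choosing each threshold beyond the finitely many points of $A$ that could possibly be selected. This is legitimate because $A\cap B_m$ is finite for each $m$ other than the one with $A_m=A$, and $A\notin\{A_n\}$ means every $B_m$ meets $A$ only finitely. This makes $f(x^1)\cap A$ finite. The difficulty is that the inductive choice depends on the fixed $A$, which is a single real and hence needs no choice. However, $X$ only asks for some common witness, and another $A''\in\mathcal A'$ might hit both halves. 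To defeat this, I would instead choose $x^1$ so that $f(x^1)$ is disjoint from $f(x^0)$ while its selected points are pushed far enough that any $A''$ meeting $f(x^0)$ infinitely, being almost disjoint from $A$, cannot also meet $f(x^1)$ infinitely. If that fails, I would strengthen $X$ by requiring the common witness to meet the halves \emph{in a synchronized way} (e.g.\ at corresponding blocks), so that the diagonalization against a single fixed $A$ suffices.
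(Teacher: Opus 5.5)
Your setup is sound: the disjointification, $\mathcal A'\in\Gamma$, continuity of $f$, finiteness of $f(x)\cap A_n$, $X\in\Gamma$, and the refutation of $[H]^\omega\cap X=\emptyset$ all go through. The genuine gap is at the step you yourself call the main obstacle: you never actually produce an $x\in[H]^\omega$ with $x\notin X$. Pushing the thresholds of $x^1$ past the finitely many points of $A\cap B_m$ controls only the single set $A$. If all you know is that $A\cap f(x^0)$ is infinite, then $f(x^0)\setminus A$ may be infinite and may meet other members $A''\in\mathcal A'$ infinitely, possibly many of them. The almost disjointness of $A''$ and $A$ says nothing about $A''\cap f(x^1)$. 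By maximality, $f(x^1)$ must meet \emph{some} member of $\mathcal A'$ infinitely, and your construction has no control over which one. The ``synchronized'' variant of $X$ is too unspecified to evaluate, and it runs into the same problem.

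The missing idea is to make $A$ the \emph{only} possible witness for the first half, by arranging $f(x^0)\subseteq A$ rather than merely $|A\cap f(x^0)|=\aleph_0$. Take $A\in\mathcal A'$ with $A\cap f(H)$ infinite. Then the set $J$ of those $k$ for which the point contributed by the pair $(H(2k),H(2k+1))$ lies in $A$ is infinite. The halves are determined by positions in $x$, so build $x$ in a single recursion, alternating two kinds of blocks of four elements of $H$. The first kind consists of two pairs $(H(2k),H(2k+1))$ with $k\in J$. The second kind is $i_1<t_1<i_2<t_2$ with each $t_j$ above every element of $A\cap B_{i_j}$. Then $f(x^0)\subseteq A$ and $f(x^1)\cap A=\emptyset$. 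Any $A''\in\mathcal A'$ meeting $f(x^0)$ infinitely meets $A$ infinitely, so $A''=A$, and $A$ misses $f(x^1)$. Hence $x\notin X$, and your argument closes.

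This uniqueness observation is the heart of the paper's proof (Claim~\ref{claim:dim1split} together with case (2)). The paper builds it directly into the simpler coloring $P=\{z:\exists A\in\mathcal A\,[\Phi(z)\subseteq A]\}$, so no splitting into halves is needed. One side of homogeneity is refuted by thinning until $\Phi(z)$ lies inside some $A$. The other side is refuted by producing $y$ with $\Phi(y)$ split by some $A$, since any $C\in\mathcal A$ containing $\Phi(y)$ would have to equal $A$.
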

	
	\begin{proof}
		Let $\mathcal A \in \Gamma$ be a Dedekind-infinite MAD family, and fix a sequence $\{A_n: n\in \mathbb{N}\}$ of distinct elements in $\mathcal A$.
		By replacing each $A_n$ with $A_n\setminus \bigcup_{i<n} A_i$ if necessary, we may assume without loss of generality that the sets $A_n$ are pairwise disjoint. We can further assume that $ A_n(0)<A_{n+1}(0) $ for any $ n\in\bbn $.
		
		For $x\in [\mathbb N]^\omega$, define
		\[
		\Phi(x)=\{A_{x(2n)}(x(2n+1)): n\in\mathbb{N}\}. \tag{$\ast$}
		\]
		Figure \ref{fig:tildefin} illustrates this definition.
		
		\begin{figure}[h]
			\centering
			\begin{tikzpicture}[scale=0.9]
				\foreach \n in {0,1,2,3,4,5,6,7} {
					\draw[gray!30, thick] (\n*1.2, -2.5) -- (\n*1.2, -8);
				}
				
				\foreach \n in {0,1,2,3,4,5,6,7} {
					\foreach \k in {6,...,15} {
						\fill[blue] (\n*1.2, -\k*0.5) circle(1.5pt);
					}
				}
				
				\draw[->, thick] (-0.8, -8) -- (-0.8, -2.5); \node[left] at (-0.8, -8) {$0$};
				\node[left] at (-0.8, -2.5) {$\infty$};
				
				\node[above] at (0, -8.7) {$A_0$};
				\node[above] at (1.2, -8.7) {$A_1$};
				\node[above] at (2.4, -8.7) {$A_2$};
				\node[above] at (3.6, -8.7) {$A_3$};
				\node[above] at (4.8, -8.7) {$A_4$};
				\node[above] at (6.0, -8.7) {$A_5$};
				\node[above] at (7.2, -8.7) {$\cdots$};
				\node[above] at (8.4, -8.7) {$A_m$};
				
				\fill[red] (0, -13*0.5) circle(3pt);
				\node at (0, -13*0.5) [right] {\small $A_{0}(2)$};
				
				\fill[red] (3.6, -11*0.5) circle(3pt);
				\node at (3.6, -11*0.5) [right] {\small $A_3(4)$};
				
				\fill[red] (8.4, -9*0.5) circle(3pt);
				\node at (8.4, -9*0.5) [right] {\small $A_m(k)$};
				
			\end{tikzpicture}
			\caption{Definition of $\Phi(x)$ for $x=\{0,2,3,4,\cdots,m,k,\cdots\}$}
			\label{fig:tildefin}
		\end{figure}
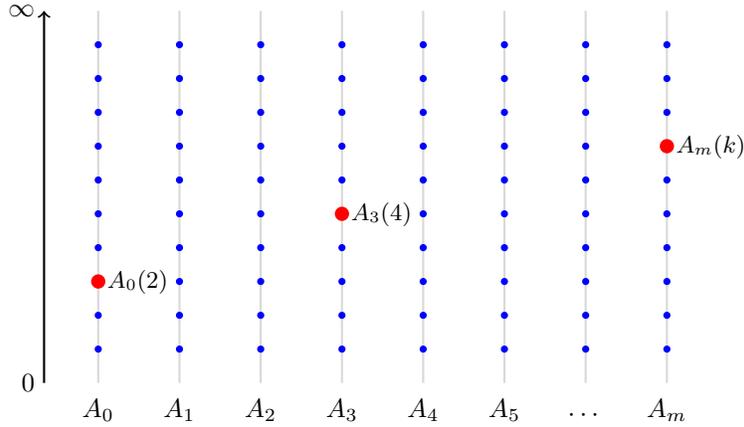
		
		\begin{claim}{\label{claim:dim1hide}}
			For every $x\in [\mathbb N]^\omega$,  there exist $y\in [x]^\omega$	and $A\in \mathcal A$
			such that $ \Phi(y)\subseteq A$.
		\end{claim}
		
		\begin{claimproof}{\ref{claim:dim1hide}}
			Since $\mathcal A$ is a MAD family and $\Phi(x)$ is an infinite subset of $\mathbb{N}$,  there exists some $A\in \mathcal A$ such that $\Phi(x)\cap A$ is infinite. 
			By the definition of $\Phi(x)$, we have $A\not= A_n$ for all $n\in \mathbb{N}$.
			Enumerate $\Phi(x)\cap A$ as:
			\[ \{A_{x(2n_k)}(x(2n_k+1)): k\in\mathbb{N} \}\]
			Now, define: 
			\[
			y=\{x(2n_k):k\in\mathbb{N}\}
			\cup\{x(2n_k+1):k\in\mathbb{N}\}.
			\]  
			Then, we have \[\Phi(y)=\{A_{y(2n)}(y(2n+1)):n\in\mathbb{N}\}=\Phi(x)\cap A\subseteq A,\]
			as required.
		\end{claimproof}
		
		\begin{claim}{\label{claim:dim1split}}
			For every $x\in [\mathbb N]^\omega$,  there exist $y\in [x]^\omega$	and $A\in \mathcal A$ such that both $ A\cap\Phi(y)$ and $ \Phi(y)\setminus A$ are infinite.
		\end{claim}
		
		\begin{claimproof}{\ref{claim:dim1split}}
			By the previous claim, we can assume that $\Phi(x) \subseteq A$, for some $A \in \mathcal A$.
			As $A\not= A_n$ for all $n\in \mathbb{N}$, $A\cap A_n$ is finite for all $n\in\mathbb{N}$.
			
			Let $x_e = \{x(2n) : n \in \mathbb{N}\}$ be the set of even-indexed elements of $x$. We construct $ y\in \infsub{x} $ by recursively defining $ y(i)$,  $y(i+1)$, $y(i+2)$, $y(i+3)$ for every $ i\in\{4k:k\in\bbn\} $ as follows. We assume $ y(-1)=-1 $ for convenience.
			\begin{itemize}
				\item[(1)] Define $ y(i) = \min\{u\in x_e: u > y(i-1)\} $.
				\item[(2)] Define $ y(i+1) = \min\{u\in x : A_{y(i)}(u)\notin A \}$. This is possible as $ A_{y(i)}\cap A $ is finite.
				\item[(3)] Define $y(i+2) = \min\{u \in x_e : u > y(i+1)\}$.
				\item[(4)] Define $y(i+3) = \min\{u\in x: u > y(i+2) \}$. This ensures that  
				$A_{y(i+2)}\big(y(i+3)\big) \in A$.
			\end{itemize}
			Clearly, $\Phi(y)=\big\{A_{y(2i)}\big(y(2i+1)\big):i\in\mathbb{N}\big\}$ has the required property.
		\end{claimproof}
		
		Let us define
		\[
		P=\{z\in [\mathbb N]^\omega: \exists A\in \mathcal A \ [\Phi(z)\subseteq A ] \}.
		\] 
		
		As
		\[
		A_{2n}(0) \leq A_{x(2n)}(0) < A_{x(2n)}(x(2n+1)),
		\]
		the quantifier $ n $ has only finitely many choices when verifying $ p\notin \Phi(x)$. We have that $ \Phi $ is continuous. Since $\Gamma$ is a good pointclass, we have $P\in \Gamma$. Now we show $P$ is not Ramsey. Otherwise, one of the following cases occurs:
		\begin{itemize}
			\item[(1)] Suppose that there exists $x \in [\mathbb N]^\omega$ such that $[x]^\omega\cap P = \emptyset$.
			
			By Claim \ref{claim:dim1hide}, there exist $y \in [x]^\omega$ and $A \in \mathcal A$ with $\Phi(y) \subseteq A$, i.e., $y \in P$.
			This contradicts the assumption that $[x]^\omega \cap P=\emptyset$.
			\item[(2)] Suppose there exists $x \in [\mathbb N]^\omega$ such that $[x]^\omega \subseteq P$.
			
			By Claim \ref{claim:dim1split}, there exist $y \in [x]^\omega$ and $A \in \mathcal A$ such that both $A\cap \Phi(y)$ and $\Phi(y)\setminus A$ are infinite. Note that $ y\in P $, so there is $ C\in \idl{A} $ such that $ \Phi(y)\subseteq C $. As $A\cap \Phi(y)$ is infinite, we have that $ A\cap C $ is also infinite, and thus $ A=C $ by almost disjointness. This means $ \Phi(y)\subseteq A $, a contradiction.
		\end{itemize}
		
		Both cases lead to a contradiction, completing the proof.
	\end{proof}
	
	Note that Theorem \ref{thm:mad-ramsey} has several classical consequences. In particular, it gives an additional proof that there are no analytic infinite MAD families.
	
	\begin{cor}[\cite{mathias1977happy}]
		There are no analytic infinite MAD families.
	\end{cor}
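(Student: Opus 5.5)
The plan is to deduce the corollary from Theorem~\ref{thm:mad-ramsey}, taking $\Gamma$ to be the pointclass $\bfsgm^1_1$ of analytic sets. Three things have to be checked: that $\bfsgm^1_1$ is a good pointclass, that every analytic subset of $\infsub{\bbn}$ has the Ramsey property, and that an infinite analytic MAD family is Dedekind-infinite. Since the corollary is a classical $\zfcaxiom$ statement, I will work in $\zfcaxiom$; countable choice suffices where it is needed.

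First, goodness. Analytic sets contain all clopen (indeed all Borel) sets. They are closed under continuous preimages and continuous images, and under finite products. They are closed under countable unions and countable intersections. All of these are standard facts from \cite{kechris2012classical}: for the union and intersection steps one uses countable choice to pick continuous surjections from $\bairesp$ witnessing that each set is analytic. So $\bfsgm^1_1$ is good.

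Second, the Ramsey property for analytic sets. This is the Silver--Mathias theorem that every analytic subset of $\infsub{\bbn}$ is (completely) Ramsey, which I will cite rather than reprove. If a self-contained route were wanted, I would follow Galvin--Prikry for Borel sets and then use Mathias forcing, or the Ellentuck topology, in which analytic sets have the Baire property and hence are completely Ramsey. This is the only substantive input and the main obstacle, but it is classical.

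Third, Dedekind-infiniteness. Under $\ccaxiom_\bbr$, and a fortiori in $\zfcaxiom$, every infinite subset of $\infsub{\bbn}$ is Dedekind-infinite, as remarked after the definition of $\dcaxiom_\bbr$. Hence an infinite analytic MAD family would be a Dedekind-infinite MAD family in a good pointclass all of whose members are Ramsey. This contradicts Theorem~\ref{thm:mad-ramsey} and finishes the proof.
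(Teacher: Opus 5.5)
Your proposal is correct and matches the paper's proof. The paper simply applies Theorem~\ref{theorem:introduction} with $\Gamma$ the analytic sets, and going through Theorem~\ref{thm:mad-ramsey} plus the $\ccaxiom_\bbr$ remark on Dedekind-infiniteness is exactly how that theorem is obtained; your explicit checks that the analytic sets form a good pointclass, and your citation of Silver's theorem that analytic sets are Ramsey, spell out what the paper leaves implicit.
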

	\begin{proof}
		Apply Theorem \ref{theorem:introduction} where $\Gamma$ is the pointclass of analytic sets.
	\end{proof}
	
	As a further application, we recover Törnquist's theorem on Solovay's model.
	
	\begin{cor}[\cite{tornquist2018definability}]
		There are no infinite MAD families in Solovay's model.
	\end{cor}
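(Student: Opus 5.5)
The plan is to derive the corollary from Theorem~\ref{thm:mad-ramsey} (equivalently Theorem~\ref{theorem:introduction}), applied inside Solovay's model with $\Gamma$ the pointclass of \emph{all} subsets of Polish spaces. In Solovay's model $V[G]$, obtained by Levy-collapsing an inaccessible cardinal to $\omega_1$, one usually works either in the full extension, where $\zfcaxiom$ holds, or in the inner model $L(\bbr)^{V[G]}$ (or $\mathrm{HOD}(\bbr)$), where $\zfaxiom+\dcaxiom$ holds. I will work in the latter, since that is the model meant in Conjecture~\ref{conjecture:con_ramsey}(1).

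First I check that the collection of all subsets of Polish spaces is trivially a good pointclass: it contains the clopen sets and is closed under every operation in the definition. Second, I invoke Mathias' theorem \cite{mathias1969generalization}: in Solovay's model every subset of $\infsub{\bbn}$ has the Ramsey property. So every set in $\Gamma$ is Ramsey, and Theorem~\ref{thm:mad-ramsey} yields that there is no Dedekind-infinite MAD family.

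Third, I upgrade ``Dedekind-infinite'' to ``infinite''. The model satisfies $\dcaxiom$, hence $\ccaxiom_\bbr$. By the remark after the definition of the choice principles, every infinite subset of $\bbr$ (and thus of $\infsub{\bbn}$, via a Borel bijection) is then Dedekind-infinite. So an infinite MAD family would be Dedekind-infinite and in $\Gamma$, a contradiction. This is exactly the form of Theorem~\ref{theorem:introduction} under $\zfaxiom+\ccaxiom_\bbr$.

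The only real issue is which model ``Solovay's model'' refers to. The corollary is meant for $L(\bbr)$ of the Levy collapse, or any inner model of it satisfying $\ccaxiom_\bbr$ in which all sets of reals are Ramsey. In that setting the argument is immediate once Mathias' result and the countable choice available there are cited. Beyond that, nothing needs to be checked.
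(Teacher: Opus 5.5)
Your proposal is correct and is essentially the paper's own argument: the paper likewise notes that Solovay's model satisfies $\zfaxiom+\ccaxiom_\bbr$, cites Mathias for the Ramsey property of all sets there, and applies Theorem~\ref{theorem:introduction} with $\Gamma$ the pointclass of all sets. You route this explicitly through Theorem~\ref{thm:mad-ramsey} plus the $\ccaxiom_\bbr$ upgrade from ``infinite'' to ``Dedekind-infinite'', and you clarify that the intended model is $L(\bbr)$ of the L\'evy collapse rather than the full $\zfcaxiom$ extension; both points only make explicit what the paper leaves implicit.
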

	\begin{proof}
		Solovay's model \cite{solovay1970model} satisfies $\zfaxiom + \mathsf{CC}_{\mathbb{R}}$, and Mathias \cite{mathias1969generalization} proved that every set of reals in this model has the Ramsey property. Apply Theorem \ref{theorem:introduction} where $\Gamma$ is the pointclass of all sets.
	\end{proof}
	
	Assuming the \emph{Projective Determinacy} $ \pdaxiom $, we also obtain:
	
	\begin{cor}[\cite{neeman2018happy}]
		$\pdaxiom$ implies there are no projective infinite MAD families.
	\end{cor}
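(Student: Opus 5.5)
The plan is to apply Theorem~\ref{thm:mad-ramsey}. The only work is to show that the pointclass $\Gamma$ of all projective sets is a good pointclass, that $\pdaxiom$ makes every set in it Ramsey, and that a projective infinite MAD family is Dedekind-infinite.

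\emph{Goodness.} The projective sets contain all clopen sets. They are closed under continuous preimages, finite products, and countable unions and intersections, since each $\bfsgm^1_n$ and $\bfpi^1_n$ is. Closure under continuous images follows because a continuous image of a $\bfsgm^1_n$ or $\bfpi^1_n$ set is $\bfsgm^1_{n+1}$. So $\Gamma$ is good. One small point: the countable unions and intersections in the proof of Theorem~\ref{thm:mad-ramsey} are applied to sets of bounded projective level, so they stay projective. It is enough that the specific set $P$ built there is projective, and this is clear since $P$ is obtained from $\mathcal A$ by a continuous preimage, a product, and a projection.

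\emph{Ramsey property.} Under $\pdaxiom$, every projective subset of $\infsub{\bbn}$ is (completely) Ramsey. This is the classical consequence of projective determinacy, proved via determinacy of the appropriate Mathias/Kechris-type games for projective sets, and I would cite it as a standard fact. I expect this to be the only step needing an outside reference, and so the main "obstacle", though it is well known.

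\emph{Dedekind-infiniteness.} Theorem~\ref{thm:mad-ramsey} asks for a Dedekind-infinite family. We work in $\zfcaxiom + \pdaxiom$ (or at least with $\mathsf{CC}_{\mathbb R}$, as in Theorem~\ref{theorem:introduction}). Then every infinite subset of $\infsub{\bbn}$ is Dedekind-infinite, so any infinite projective MAD family qualifies, and Theorem~\ref{thm:mad-ramsey} rules it out. This completes the argument.
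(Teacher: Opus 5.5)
Your proposal is correct and follows the paper's proof: the paper cites Harrington--Kechris for the (complete) Ramsey property of projective sets under $\pdaxiom$ and applies the main theorem, with $\mathsf{CC}_{\mathbb R}$ supplying Dedekind-infiniteness. One correction: the class of all projective sets is \emph{not} closed under countable unions (a union of sets of unbounded projective level can fail to be projective), so ``So $\Gamma$ is good'' is false as written. Your remark that only $P$ needs to lie in $\Gamma$ repairs this, and more cleanly you can take $\Gamma=\bfsgm^1_n$ with $\mathcal A\in\bfsgm^1_n$, which is a genuinely good pointclass---a point the paper itself glosses over.
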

	\begin{proof}
		Under $\pdaxiom$, every projective set is completely Ramsey \cite{harrington1981determinacy}. Apply Theorem \ref{theorem:introduction} where $\Gamma$ is the pointclass of all projective sets.
	\end{proof}

	\section{Ramsey property and Vitali set}{\label{section:vitali}}
	
	We now turn to Vitali sets, another classical object of study. We will use the following facts about the Ramsey property.
	
	\begin{prop}\label{prop:ramsey-continuous}
		Let $\Gamma$ be a good pointclass.
		\begin{enumerate}[label=(\arabic*), font=\upshape]
			\item[(1)] All sets in $\Gamma$ are Ramsey if and only if all sets in $\Gamma $ are completely Ramsey.
			\item[(2)] \textnormal{($\zfaxiom + \mathsf{DC}_{\mathbb R}$)} If all sets in $\Gamma$ are Ramsey, then for any $\Gamma$-measurable $f: \infsub{\bbn}\rtar Y$ where $Y$ is Polish, there is an $H\in \infsub{\bbn}$ such that $f\uphar\infsub{H}$ is continuous.
		\end{enumerate}
	\end{prop}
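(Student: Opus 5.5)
For (1), the nontrivial direction assumes every set in $\Gamma$ is Ramsey and derives complete Ramseyness. Fix $X\in\Gamma$, a finite $a$, and $A\in\infsub{\bbn}$ with $\max a<\min A$. Consider the map $g:\infsub{\bbn}\rtar\infsub{\bbn}$ given by $g(z)=a\cup \hat{A}''z$. This map is continuous. Since $\Gamma$ is closed under continuous preimages, $Y=g^{-1}[X]\in\Gamma$, so $Y$ is Ramsey. Pick $H'$ homogeneous for $Y$ and set $H=\hat{A}''H'\in\infsub{A}$. Then $[a,H]=g''\infsub{H'}$. Hence $[a,H]\subseteq X$ if $\infsub{H'}\subseteq Y$, and $[a,H]\cap X=\emptyset$ if $\infsub{H'}\cap Y=\emptyset$. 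No choice is needed here.

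For (2), I follow the classical Galvin--Prikry/Mathias argument. It shows that completely Ramsey sets are closed under the relevant operations and that Baire-measurable functions become continuous on some $\infsub{H}$. Fix a countable basis $\{U_k:k\in\bbn\}$ of $Y$. Each $f^{-1}[U_k]$ lies in $\Gamma$, and by (1) it is completely Ramsey. The aim is to build, by a fusion construction, $H$ such that for every finite $a\subseteq H$ and every $k\le\max a$ (with some bookkeeping), $[a,H/a]$ is homogeneous for $f^{-1}[U_k]$. Here $H/a=\{n\in H:n>\max a\}$. The construction proceeds as follows. Given $H_n$ and $h_0<\dots<h_{n-1}$, there are finitely many pairs $(a,k)$ with $a\subseteq\{h_0,\dots,h_{n-1}\}$ and $k\le n$. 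For each pair in turn we shrink $H_n$ to make $[a,\cdot]$ homogeneous for $f^{-1}[U_k]$. We then take $h_n=\min$ of the result and continue. Choosing each shrinking requires choosing a real witness. $\mathsf{DC}_\bbr$ lets us make these countably many dependent choices; alternatively we can code the whole step as a relation on reals. Diagonalizing, $H=\{h_n\}$ has the property that each $[a,H/a]$ with $a\subseteq H$ decides every $U_k$ with $k\le\max a$ (after reindexing).

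The main obstacle is converting this pure decision into continuity. Fix $z\in\infsub{H}$ and a basic neighborhood $U_k\ni f(z)$. Pick $U_j$ with $f(z)\in U_j\subseteq\overline{U_j}\subseteq U_k$. Pure decision only tells us that for each initial segment $a=z\cap(m+1)$, either all or none of $[a,H/a]$ maps into $U_j$. The worry is that $z$ itself lies in $[a,H/a]$ with $f(z)\in U_j$, which would give "all" for every large $a$; but this does not immediately follow. I would handle this as in the standard proof that completely Ramsey sets are Ramsey-null modulo the Ellentuck topology. The sets $f^{-1}[U_j]$ are Ellentuck-Baire, so $f$ is Ellentuck-continuous on a comeager set, and Ellentuck-meager sets are Ramsey null. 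This yields $H'\subseteq H$ on which $f$ is continuous in the Ellentuck topology with all sets of the form $[a,H'/a]$.

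It then remains to upgrade Ellentuck continuity to metric continuity. I would do this by a second fusion with a shrinking step: for each $a$ and $k$, if some $[a,B]$ maps into $U_k$, then shrink so that $[a,H/a]$ does. This is done so that the Ellentuck neighborhoods are witnessed by the canonical $[a,H/a]$, and those are open in the usual topology restricted to $\infsub{H}$. Getting this bookkeeping right is the step I expect to require the most care.
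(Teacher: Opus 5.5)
Part (1) is the paper's argument. Composing $\hat A$ into the map, so that you stay inside $[\mathbb N]^\omega$, is if anything slightly cleaner. In part (2) your fusion is also essentially the paper's. The paper handles $U_k$ once, at stage $k$, for all $b\subseteq B_k$; handling all $k\le n$ at stage $n$ is harmless.

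The gap is in the last step. You assert that passing from homogeneity to continuity ``does not immediately follow.'' You then detour through Ellentuck-Baire sets, continuity on an Ellentuck-comeager set, Ramsey-nullness of Ellentuck-meager sets, and a second fusion whose bookkeeping you leave open. As written, (2) is therefore not proved. The detour also buys nothing: it leans on Ellentuck's theorem and on $\sigma$-additivity of Ramsey-null sets (each itself a fusion argument). Your proposed second fusion, once made precise, is just the first one repeated.

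The obstacle is illusory, and your own construction already removes it. Write $H'_n$ for the set obtained after the shrinkings at stage $n$; then $h_m\in H'_n$ for all $m\ge n$.
\begin{itemize}
\item[(i)] If $a\subseteq H$ is finite and nonempty with $\max a=h_i$, then $H/a=\{h_{i+1},h_{i+2},\dots\}\subseteq H'_{i+1}$. Hence $[a,H/a]\subseteq[a,H'_{i+1}]$ is homogeneous for $f^{-1}[U_j]$ whenever $j\le i+1$.
\item[(ii)] Let $z\in[H]^\omega$ with $f(z)\in U_j$, and let $a$ be an initial segment of $z$ with $\max a=h_i$ and $i+1\ge j$. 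Every element of $z\setminus a$ lies in $H$ above $\max a$, so $z\in[a,H/a]$.
\item[(iii)] Homogeneity together with $z\in f^{-1}[U_j]$ forces $[a,H/a]\subseteq f^{-1}[U_j]$. This is exactly the ``all'' case you described as a worry.
\item[(iv)] The set $[a,H/a]=\{w\in[H]^\omega: w\cap(\max a+1)=a\}$ is a basic open neighbourhood of $z$ in $[H]^\omega$. So every $f^{-1}[U_j]\cap[H]^\omega$ is open in $[H]^\omega$, which is continuity of $f\upharpoonright[H]^\omega$.
\end{itemize}
In fact each such set is clopen, since membership of $w$ depends only on $w\cap\{h_0,\dots,h_{j-1}\}$. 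No $\overline{U_j}\subseteq U_k$ refinement and no Ellentuck topology are needed. This one-line step is how the paper concludes from its fusion.
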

	
	\begin{proof}
		\leavevmode
		\begin{itemize}
			\item[(1)] One direction is trivial since completely Ramsey sets are Ramsey. Conversely, for any $ X\in\Gamma$ and $ [a,A]$, define $f:\infsub{A}\rtar \infsub{\bbn}$ such that $f(T) = a\sqcup T$. Then $f^{-1}[X]$ is in $\Gamma\uphar\infsub{A}$ by the closure property of $ \Gamma $. So $f^{-1}[X]$ is Ramsey in $ \infsub{A} $ and there is $B\in\infsub{A}$ homogeneous for $f^{-1}[X]$. Hence $[a,B]$ is homogeneous for $X$.
			\item[(2)] This proposition is similar to \cite[Exercise~19.19]{kechris2012classical}. Let $\{U_k:k\in\bbn\}$ be a basis of $Y$. Since $ f $ is $ \Gamma $-measurable, we have $f^{-1}[U_k]\in \Gamma$, so by (1) they are completely Ramsey. Now, we follow an argument similar to the fusion argument in \cite[Lemma~19.17]{kechris2012classical}, i.e. 
			\begin{itemize}
				\item[i.] Let $A_{0}=\bbn, B_{0}=\emptyset$.
				\item[ii.] For any $k\geq 0$, choose $A_{k+1}\subseteq A_k\backslash(1+ \max B_k)$ (let $\max \emptyset = -1$ for convenience) such that for any $b\subseteq B_k $, $[b,A_{k+1}]$ is homogeneous for $f^{-1}[U_k]$, and let $n_{k+1} = \min A_{k+1}, B_{k+1}=B_k\cup \{n_{k+1}\}$.
			\end{itemize}  
			Let $C=\{n_k:k>0\}$. Then for any $k\in\bbn$, $f^{-1}[U_k]\cap \infsub{C}$ is a union of sets of the form $[\{n_{k_0}, \dots, n_{k_m}\}, C\backslash n_{k_{m}+1}]$, which is open in $\infsub{C}$ with the usual topology, and $\dcaxiom_\bbr$ suffices to choose all sets $A_{k+1}$.
		\end{itemize}		
	\end{proof}
	
	We can now prove the main result of this section.
	
	\begin{thm}[$\zfaxiom + \mathsf{DC}_{\mathbb R}$]\label{thm:ramsey-no-vitali}
		Let $\Gamma$ be a good pointclass. If every subset of $[\mathbb N]^\omega$ in $\Gamma$ has the Ramsey property, then no Vitali set is in $\Gamma$. 
	\end{thm}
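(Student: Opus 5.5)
Suppose toward a contradiction that $V\in\Gamma$ is a Vitali set. The plan is to turn $V$ into a $\Gamma$-measurable function from $\infsub{\bbn}$ into the countable discrete space $\bbq$, use Proposition~\ref{prop:ramsey-continuous}(2) to make it continuous on some $\infsub{H}$, and then use a finite modification of a real to contradict the uniqueness clause in the definition of a Vitali set.

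\textbf{Step 1 (the coding map).} Let $r:\infsub{\bbn}\rtar\bbr$ be the continuous map $r(x)=\sum_{n\in x}2^{-n}$. Define $f:\infsub{\bbn}\rtar\bbq$ by letting $f(x)$ be the unique $q\in\bbq$ such that $r(x)-q\in V$. This is well defined: there is a unique $v\in V$ with $r(x)-v\in\bbq$, and then $q=r(x)-v$. No choice is needed here. We regard $\bbq$ as a discrete countable space, which is Polish.

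\textbf{Step 2 ($f$ is $\Gamma$-measurable).} For each $q\in\bbq$,
\[
f^{-1}[\{q\}]=\{x: r(x)-q\in V\},
\]
which is the preimage of $V$ under the continuous map $x\mapsto r(x)-q$, and hence lies in $\Gamma$. Every open $U\subseteq\bbq$ is a countable union of singletons, so $f^{-1}[U]\in\Gamma$ by closure under countable unions. By hypothesis every set in $\Gamma\uphar\infsub{\bbn}$ is Ramsey. Proposition~\ref{prop:ramsey-continuous}(2), which uses $\dcaxiom_\bbr$, then gives $H\in\infsub{\bbn}$ such that $f\uphar\infsub{H}$ is continuous.

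\textbf{Step 3 (contradiction).} Fix any $x\in\infsub{H}$. Since $\bbq$ is discrete, continuity gives a finite initial segment $s$ of $x$ such that $f$ is constant, say equal to $q$, on all $y\in\infsub{H}$ extending $s$. Pick $m\in H\setminus x$ with $m>\max s$; this exists because $x\subseteq H$ can be assumed coinfinite in $H$, for instance by thinning $x$ beyond $s$. Put $y=x\cup\{m\}\in\infsub{H}$. Then $y$ still extends $s$, so $f(y)=f(x)=q$. Hence both $r(x)-q$ and $r(y)-q$ belong to $V$. They differ by $r(y)-r(x)=2^{-m}$, which is a nonzero rational. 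So they are two distinct elements of $V$ in the same $\bbq$-coset, contradicting uniqueness.

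The main obstacle is obtaining continuity of $f$ on some $\infsub{H}$, and this is exactly Proposition~\ref{prop:ramsey-continuous}(2); it is where $\dcaxiom_\bbr$ enters. Once that is available, the rest is elementary. The only care needed is to ensure that the finite modification stays inside $\infsub{H}$ and inside the neighbourhood on which $f$ is constant, which is why $m$ is chosen from $H$ and above $\max s$.
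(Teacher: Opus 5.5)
Your proof is correct. It has the same skeleton as the paper's: code reals by infinite subsets of $\mathbb{N}$, apply Proposition~\ref{prop:ramsey-continuous}(2) to get continuity on some $[H]^\omega$, then use finite modifications. The difference is the function you feed into that proposition, and this changes the endgame.

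The paper uses the representative map $v\circ\mathrm{e}$ into $\mathbb{R}$, which is \emph{invariant} under finite modifications. Every basic neighbourhood in $[H]^\omega$ contains a finite modification of any given element, so continuity forces this map to be constant on all of $[H]^\omega$. A second, number-theoretic ingredient is then needed: some $y\subseteq H$ with $H\setminus y$ having strictly increasing gaps, so that $\mathrm{e}(H)-\mathrm{e}(y)=\mathrm{e}(H\setminus y)$ is irrational by non-periodicity of its binary expansion.

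You use instead the rational offset $x\mapsto r(x)-v(r(x))$. It takes values in the countable discrete space $\mathbb{Q}$, and adding one point $m\notin x$ shifts it by exactly $2^{-m}\neq 0$. Continuity into a discrete space is just local constancy, so the contradiction appears inside a single neighbourhood $U_s\cap[H]^\omega$. This removes both the constancy claim and the irrationality argument. Your check of $\Gamma$-measurability is also more explicit than the paper's bare assertion that $v$ is $\Gamma$-measurable: preimages of singletons are continuous preimages of $V$, and every open subset of $\mathbb{Q}$ is a countable union of singletons.

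What the paper's route offers in return is a principle that does not refer to any coordinate on the classes. A $\Gamma$-measurable map invariant under finite modifications is constant on some $[H]^\omega$, so a selector is ruled out as soon as every cube contains two inequivalent codes. Your route uses a different feature of the Vitali relation: each class carries a countable discrete coordinate (the offset) that every one-point modification changes. That is why you never need to produce an inequivalent pair.
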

	
	\begin{proof}
		Suppose otherwise, and let $V \in \Gamma$ be a Vitali set. 
		For any $x \in \mathbb R$, define $v(x)$ to be the unique element in $ V $ such that $x - v(x) \in \bbq$. Note that $v$ is $\Gamma $-measurable.
		
		We define the following encoding function $\encfunc: \infsub{\bbn}\rtar\bbr$ to record combinatorics on $ \infsub{\bbn} $. It is defined by 
		\[
		\encfunc(x):=\sum_{i=0}^{\infty}\frac{1}{2^{x(i)+1}},
		\]
		which is continuous. 
		
		Let $ 0_n $ be the sequence of $n$-many $0$'s. By \cite[~Exercise 2.19]{bukovsky2011structure}, the value $\encfunc(x)$ is irrational if and only if its binary expansion
		\[
		0.0_{x(0)}10_{x(1)- x(0) -1}1\cdots 0_{x(n+1)- x(n) -1}1\cdots
		\]
		is not periodic, where a binary expansion $\{x_i\}_{i=1}^\infty$ of a real is called periodic if 
		there are natural numbers $k, l$ such that $x_{i+l} = x_i$ for every $i > k$.
		
		Consider $f = v \circ \encfunc: [\mathbb N]^\omega \to \bbr$, which is also $\Gamma $-measurable. By Proposition \ref{prop:ramsey-continuous}(2), there exists $H \in [\mathbb N]^\omega$ such that $f \upharpoonright [H]^\omega$ is continuous.	
		
		\begin{claim}{\label{claim:encodecisconst}}
			The function $f$ is constant on $[H]^\omega$.
		\end{claim}
		
		\begin{claimproof}{\ref{claim:encodecisconst}}
			If not, there exist $ x_1, x_2 \in \infsub{H} $ with $ f(x_1) \not= f(x_2) $. Fix disjoint open sets $ W_1, W_2 \subseteq \bbr$ such that $ f(x_i) \in W_i$ for $i=1,2 $. By continuity, there exist basic open subsets $ U_{s_1}, U_{s_2} \subseteq \infsub{H} $ containing $ x_1, x_2 $ such that the image $ f'' U_{s_i} $ is contained in $ W_i$ for $i=1,2 $. We can assume $ |s_1| = |s_2| $. Consider
			\[
			z = \{s_1(i) :i< |s_1|\} \sqcup \{n\in x_2: n> s_1(|s_1|-1)\}.
			\]
			By the property of the encoding function, we have $ \encfunc(z)-\encfunc(x_2)\in \bbq $, so by definition $ f(z) = f(x_2)\in W_2 $. Also, we have $ z\in U_{s_1} $, so $ f(z)\in W_1$, a contradiction.
		\end{claimproof}
		
		Now, note that we can recursively define $y \in [H]^\omega$ such that $H\setminus y$ is infinite and the sequence
		\[
		\{ (H\setminus y)(i+1) - (H\setminus y)(i) : i \in \bbn\}
		\]
		is strictly increasing and thus not periodic, which means $ \encfunc(H\setminus y) $ is irrational. Since $\encfunc(H) - \encfunc(y) = \encfunc(H\setminus y)$, it follows that $\encfunc(H)-\encfunc(y) \notin \mathbb Q$. Consequently, $f(H) \neq f(y)$, contradicting the constancy of $f$ on $[H]^\omega$.
		
		Therefore, no Vitali set belongs to $\Gamma$.
	\end{proof}
	
	The following folklore proposition shows that the existence of a Hamel basis implies the existence of a Vitali set within the same pointclass. This fact, combined with Theorem \ref{thm:ramsey-no-vitali}, will allow us to conclude that the Ramsey property also precludes the existence of Hamel bases.
	
	\begin{prop}[\cite{beriashvili2018hamel}]\label{prop:hamel-vitali}Assume $ \zfaxiom $ and let $\Gamma$ be a good pointclass. If $\Gamma$ contains a Hamel basis, then $\Gamma$ also contains a Vitali set.
	\end{prop}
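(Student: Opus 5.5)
Let $H \in \Gamma$ be a Hamel basis. The plan is to produce a Vitali set as the image of a continuous map on a set built from $H$ by the closure operations of $\Gamma$, with no use of choice. We cannot pick a distinguished basis element, since that would need choice, so the construction has to work uniformly over $H$. The idea is to take the subset of reals whose expansion in the basis has rational coefficient $0$ at a suitably chosen basis element, arranged so that every $\bbq$-coset meets this subset in exactly one point.

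Concretely, fix one element $h_0 \in H$; a single element requires no choice. Every real $x$ has a unique finite representation $x=\sum_{h\in H} q_h h$. Define
\[
V=\operatorname{span}(H\setminus\{h_0\}) .
\]
This is the set of $x$ whose $h_0$-coefficient is $0$. For any $x$ with $h_0$-coefficient $q$, the cosets $x+\bbq$ and $\bbq h_0$ do not match directly: $x - q' \in V$ would require the coefficient of $h_0$ in the rational number $q'$ to be $q$. Since $1=\sum c_h h$, this works only if $c_{h_0}\neq 0$. So I would choose $h_0$ with $c_{h_0}\neq 0$. This is a single definable choice among the finitely many basis elements in the support of $1$; in $\zfaxiom$ we simply fix one.

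With this choice, $x - r\in V$ means $q - r c_{h_0}=0$, so $r=q/c_{h_0}$ is unique. Hence $V$ meets each coset $x+\bbq$ in exactly one point, and $V$ is a Vitali set.

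It remains to show $V\in\Gamma$. Let $H' = H\setminus\{h_0\}$, which is in $\Gamma$ as the intersection of $H$ with an open set. Then
\[
V=\bigcup_{n}\bigcup_{\vec q\in\bbq^n} g_{\vec q}\big[(H')^n\big], \qquad g_{\vec q}(h_1,\dots,h_n)=\textstyle\sum q_i h_i .
\]
Here $(H')^n$ is in $\Gamma$ by closure under finite products, each $g_{\vec q}$ is continuous, and the union is countable. So $V\in\Gamma$ by closure under continuous images and countable unions; the case $n=0$ contributes $\{0\}$, which is closed.

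The main point to verify is the uniqueness computation, in particular that the coset condition uses the coefficient of $1$ correctly. Everything else is routine closure bookkeeping. I expect no use of $\ccaxiom_\bbr$, since only finitely many fixed parameters ($h_0$ and the rationals) are involved.
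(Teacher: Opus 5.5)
Your proof is correct and follows the paper's argument: you use the same set $V=\operatorname{span}(H\setminus\{h_0\})$ with $h_0$ in the support of the expansion of $1$, and the same bookkeeping that writes $V$ as a countable union of continuous images of finite powers of $H\setminus\{h_0\}$. Your single coefficient computation $q-rc_{h_0}=0$ merely merges the paper's separate existence step and its $V\cap\bbq=\{0\}$ uniqueness step into one. Two small points: your opening remark that picking a distinguished basis element would need choice is mistaken, since fixing one element of a nonempty set is plain existential instantiation, which is exactly what you (and the paper, with $z_1$) then do; and $H\setminus\{h_0\}\in\Gamma$ needs $\bbr\setminus\{h_0\}\in\Gamma$, which holds because it is a union of two continuous images of $\bbr$.
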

	
	\begin{proof}
		Let $B \in \Gamma$ be a Hamel basis. For convenience, when we express a non-zero real as a finite combination, we always assume the coefficients are non-zero and the spanning vectors are distinct.
		
		Write $1 = \sum_{k=1}^n q_k z_k$ and set $V = \operatorname{span}(B \setminus \{z_1\})$. Then $1 \notin V$. We show that $ V $ is a Vitali set in $ \Gamma $.
		
		For any $x \in \mathbb R$, write $x = \sum p_k w_k$ with $w_k \in B$. If $z_1$ does not appear as some $ w_k $, then $x \in V$. Otherwise, let $p$ be the coefficient of $z_1$, and we have $x - \frac{p}{q_1}\cdot 1 \in V$. Hence every real differs from some element of $V$ by a rational.
		
		If $v_1, v_2 \in V$ with $v_1 - v_2 \in \mathbb Q$, since $ V $ is a span we have $v_1 - v_2 \in V \cap \mathbb Q$. We show that $V \cap \mathbb Q = \{0\}$. If there is some $r \in V \cap \mathbb Q \setminus \{0\}$, let $r = \sum r_i u_i$ with $u_i \in B \setminus \{z_1\}$. This means $1 = \sum \frac{r_i}{r} \cdot u_i$, contradicting the uniqueness of the basis representation of $ 1 $. Thus $V \cap \mathbb Q = \{0\}$, which means $v_1 = v_2$. Hence $V$ is a Vitali set.
		
		Now we show $ V\in\Gamma $. Denote
		\[
		V_n = \left\{\sum_{i=1}^{n} q_i z_i: \forall i\leq n \ \left[ q_i \in \bbq \  \wedge \  z_i \in B \setminus \left\{z_1\right\}\right]\right\},
		\]
		which is the image of the linear combination function. Since this function is continuous and the domain is in $ \Gamma $, we have $ V_n \in \Gamma$. As $ V = \bigcup_{n\in\bbn} V_n $, this finishes the proof.
	\end{proof}
	
	Combining Proposition \ref{prop:hamel-vitali} with Theorem \ref{thm:ramsey-no-vitali}, we immediately obtain the following consequence for Hamel bases.
	
	\begin{cor}[$\zfaxiom + \mathsf{DC}_{\mathbb R}$]
		Let $\Gamma$ be a good pointclass. If every set in $\Gamma$ has the Ramsey property, then there is no Hamel basis in $\Gamma$.
	\end{cor}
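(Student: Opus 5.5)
The corollary follows by contraposition from the two preceding results, so the plan is simply to chain them. Work in $\zfaxiom + \mathsf{DC}_{\mathbb R}$, let $\Gamma$ be a good pointclass all of whose sets have the Ramsey property, and suppose toward a contradiction that some Hamel basis $B$ lies in $\Gamma$.

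First, apply Proposition~\ref{prop:hamel-vitali}, which needs only $\zfaxiom$. It turns the Hamel basis $B \in \Gamma$ into a Vitali set $V = \operatorname{span}(B \setminus \{z_1\})$ that is also in $\Gamma$. The only thing to check here is that the hypotheses match: Proposition~\ref{prop:hamel-vitali} requires nothing beyond $\Gamma$ being good. Its membership argument uses closure under finite products, continuous images and countable unions. It also uses that $\bbq^n$ lies in $\Gamma$, which follows because $\bbq^n$ is a countable union of singletons, and singletons are closed sets that are continuous images of a point.

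Second, apply Theorem~\ref{thm:ramsey-no-vitali}. Its hypothesis only asks that the subsets of $\infsub{\bbn}$ in $\Gamma$ be Ramsey, which is weaker than our assumption that every set in $\Gamma$ is Ramsey. It uses $\mathsf{DC}_{\mathbb R}$, which we have assumed. The theorem says $\Gamma$ contains no Vitali set, contradicting the previous step. Hence $\Gamma$ contains no Hamel basis.

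There is no real obstacle here. The only point needing care is the bookkeeping of the choice principles: $\mathsf{DC}_{\mathbb R}$ enters only through Theorem~\ref{thm:ramsey-no-vitali}, by way of Proposition~\ref{prop:ramsey-continuous}(2). This is why the corollary is stated under $\mathsf{DC}_{\mathbb R}$ rather than $\mathsf{CC}_{\mathbb R}$.
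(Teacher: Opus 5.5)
Your proposal is correct and is exactly the paper's argument: the paper obtains the corollary by combining Proposition~\ref{prop:hamel-vitali}, which works in $\zfaxiom$ and turns a Hamel basis in $\Gamma$ into a Vitali set in $\Gamma$, with Theorem~\ref{thm:ramsey-no-vitali}, which rules out Vitali sets in $\Gamma$ under $\zfaxiom+\mathsf{DC}_{\mathbb R}$. Your added bookkeeping is accurate, namely why $\bbq^n$ and hence the domain of the linear-combination map lie in $\Gamma$, and that $\mathsf{DC}_{\mathbb R}$ enters only via Proposition~\ref{prop:ramsey-continuous}(2).
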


	\section{Regularity properties and MID family}{\label{section:mid}}
	
	In this section, we prove that within a given good pointclass $\Gamma$, under any of the three classical regularity properties i.e. Lebesgue measurability, Baire property, or Ramsey property, there is no maximal independent family. The proof proceeds by first showing that the existence of an infinite MID family forces a decomposition of $\mathcal P(\mathbb N)$ into countably many filters and ideals from $\Gamma$, and then using this decomposition to derive a contradiction from each regularity assumption.
	
	\begin{defn}
		Let $\mathcal A \subseteq \mathcal P(\mathbb N)$ be a MID family from $\Gamma$. 
		Let $\mathcal U$ consist of all finite disjoint unions of basic open sets of $[\mathbb N]^\omega$. Note that $ \idl{U} $ is countable.
		
		For $x \in [\mathbb N]^\omega$, define
		\[
		\begin{aligned}
			\idl{C}(x) = \{ (F,G) \in K([\mathbb N]^\omega) \times K([\mathbb N]^\omega) : & \ F,G \in [\mathcal A]^{<\omega},\ F \cap G = \emptyset, \\
			& \sigma(F,G) \setminus x \text{ is finite } \},
		\end{aligned}
		\]
		\[
		\begin{aligned}
			\idl{D}(x) = \{ (F,G) \in K([\mathbb N]^\omega) \times  K([\mathbb N]^\omega) : & \ F,G \in [\mathcal A]^{<\omega},\ F \cap G = \emptyset, \\
			& \sigma(F,G) \cap x \text{ is finite } \},
		\end{aligned}
		\]
		where
		\[
		\sigma(F,G) = \bigcap_{a \in F} a \cap \left( \mathbb N \setminus \bigcup_{b \in G} b \right).
		\]
		
		For disjoint $U, V \in \mathcal U$, define	
		\[
		\begin{aligned}
			\mathcal F(U, V) &= \{ x \in [\mathbb N]^\omega : \exists (F,G) \in \langle U\rangle \times \langle V\rangle,  (F,G) \in \idl{C}(x) \},\\
			\mathcal I(U, V) &= \{ x \in [\mathbb N]^\omega : \exists (F,G) \in \langle U\rangle \times \langle V\rangle , (F,G) \in \idl{D}(x) \} \cup [\mathbb N]^{<\omega},
		\end{aligned}
		\]
		where $\langle U\rangle \times \langle V\rangle$ denotes the basic open set
		\[
		\{(F, G)\in K([\mathbb N]^\omega) \times K([\mathbb N]^\omega):F\subseteq U, G\subseteq V\}.
		\]
	\end{defn}
	
	We begin with the following key decomposition.
	
	\begin{lem}\label{lemma:mid-union} Let $\Gamma$ be a good pointclass. If $\Gamma$ contains a MID family, then $\mathcal P(\mathbb N)$ is a countable union of filters and ideals from $\Gamma$.
	\end{lem}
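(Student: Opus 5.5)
The plan is to show that the sets $\mathcal F(U,V)$ and $\mathcal I(U,V)$, for disjoint $U,V\in\mathcal U$, already witness the decomposition. There are three things to check: each $\mathcal F(U,V)$ is a filter and each $\mathcal I(U,V)$ an ideal (possibly after discarding degenerate cases); each lies in $\Gamma$; and together they cover $\mathcal P(\mathbb N)$. Since $\mathcal U$ is countable, there are only countably many such pairs, which gives the countability.

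For the covering, take $x\subseteq\mathbb N$. If $x$ is finite it lies in every $\mathcal I(U,V)$. If $x$ is infinite and $x\in\mathcal A$, then $F=\{x\}$, $G=\emptyset$ gives $\sigma(F,G)=x$, so $x\in\mathcal F(U,V)$ for any $U\ni x$. If $x\notin\mathcal A$, maximality says $\mathcal A\cup\{x\}$ is not independent. A failing Boolean combination involving $x$ gives disjoint finite $F,G\subseteq\mathcal A$ with either $\sigma(F,G)\cap x$ finite or $\sigma(F,G)\setminus x$ finite. Here I use that combinations from $\mathcal A$ alone are infinite, and that $\mathbb N\setminus x$ appearing means $\sigma(F,G)\setminus x$ is finite.

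The finite sets $F,G$ are finitely many distinct points of the Hausdorff zero-dimensional space $[\mathbb N]^\omega$. So one can choose pairwise disjoint basic open neighbourhoods of them, let $U$ be the union of those around $F$ and $V$ the union of those around $G$; these are disjoint members of $\mathcal U$ with $(F,G)\in\langle U\rangle\times\langle V\rangle$. Then $x\in\mathcal F(U,V)$ or $x\in\mathcal I(U,V)$. I would also check that the sets just constructed are proper: $\mathcal F(U,V)$ does not contain $\emptyset$, since $\sigma(F,G)$ is infinite by independence, and dually $\mathbb N\notin\mathcal I(U,V)$.

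Definability is routine. The set of $(F,G,x)$ with $F,G\in[\mathcal A]^{<\omega}$ disjoint, lying in $\langle U\rangle\times\langle V\rangle$, and with $\sigma(F,G)\setminus x$ finite, is obtained from $\mathcal A^n\in\Gamma$ (finite products) together with Borel conditions, by taking countable unions over the sizes. Projecting out $(F,G)$ is a continuous image, so $\mathcal F(U,V)\in\Gamma$, and similarly $\mathcal I(U,V)\in\Gamma$.

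The main obstacle is closure under intersections. Upward closure of $\mathcal F(U,V)$ is immediate. If $\sigma(F,G)\subseteq^* x$ and $\sigma(F',G')\subseteq^* x'$ with $F,F'\subseteq U$ and $G,G'\subseteq V$, then $F\cup F'\subseteq U$ and $G\cup G'\subseteq V$ are disjoint because $U\cap V=\emptyset$. Also $\sigma(F\cup F',G\cup G')=\sigma(F,G)\cap\sigma(F',G')\subseteq^* x\cap x'$, and this set is infinite by independence, so the intersection stays in the filter. Dually, $\mathcal I(U,V)$ is closed under finite unions and subsets. This is exactly why the disjointness $U\cap V=\emptyset$ is built into the definition, and verifying this step carefully is what I expect to need most attention.
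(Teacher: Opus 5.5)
Your proposal is correct and follows the paper's proof essentially step for step. You use the same sets $\mathcal F(U,V)$ and $\mathcal I(U,V)$ over disjoint $U,V\in\mathcal U$, the same combined witness $(F\cup F',G\cup G')$ for closure (made possible by $U\cap V=\emptyset$), properness via independence, covering via maximality, and membership in $\Gamma$ by projecting a $\Gamma$-set built from finite powers of $\mathcal A$. Your separate treatment of $x\in\mathcal A$ and your explicit choice of disjoint basic neighbourhoods only spell out details the paper leaves implicit.
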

	
	\begin{proof}
		The proof proceeds in three steps. First, for any two disjoint open sets $U,V$, we show that $\mathcal F(U,V)$ and $\mathcal I(U,V)$ are a filter and an ideal, respectively. Second, we prove that every real $x\in \prst(\bbn)$ belongs to some $\mathcal F(U,V)$ or $\mathcal I(U,V)$. Finally, we verify that these filters and ideals belong to $\Gamma$.
		
		\begin{claim}{\label{claim:theyareideals}}
			If $ \idl{F}(U,V)$ and $ \idl{I}(U,V) $ are non-empty, then $\idl{F}(U, V)$ is a proper filter and $\idl{I}(U, V)$ is a proper ideal.
		\end{claim}
		
		\begin{claimproof}{\ref{claim:theyareideals}}
			We verify the ideal properties for $\mathcal I(U,V)$; the filter properties for $\mathcal F(U,V)$ are analogous. In fact, $ \idl{F}(U,V) $ is the dual filter of $ \idl{I}(U,V) $.
			\begin{itemize}
				\item[(1)] If $x \in \mathcal I(U, V)$ and $y \subseteq x$, then $y \in \mathcal I(U, V)$.
				
				This is clear when $x$ is finite. If $x$ is infinite, pick $(F,G) \in \langle U\rangle \times \langle V\rangle$ with $(F,G) \in \idl{D}(x)$. Then $\sigma(F,G) \cap x$ is finite, and since $y \subseteq x$, the same $(F,G)$ witnesses $y \in \mathcal I(U, V)$.
				
				\item[(2)] For any $x_1, x_2 \in \mathcal I(U, V)$, we show that $ x_1 \cup x_2 \in \idl{I}(U,V) $. 
				
				Pick $(F_i,G_i) \in \langle U\rangle \times \langle V\rangle$ with $\sigma(F_i,G_i) \cap x_i $ finite. Let $F = F_1 \cup F_2$, $G = G_1 \cup G_2$. Then $(F,G) \in \langle U\rangle \times \langle V\rangle$ and $ F\cap G =\emptyset$. We have that
				\begin{alignat*}{2}
					& \phantom{=}& &\phantom{\big(}\sigma\left(\mathmakebox[\widthof{$F_1$}][c]{F} , \mathmakebox[\widthof{$G_1$}][c]{G} \right) \cap \left(x_1 \cup x_2\right) \\
					&=& &\big(\sigma\left(F_1,G_1\right) \cap \sigma\left(F_2,G_2\right)\big) \cap \left(x_1 \cup x_2\right) \\
					&=& \big[&\big(\sigma\left(F_1,G_1\right) \cap \sigma\left(F_2,G_2\right)\big) \cap x_1\big] \\
					&\phantom{=}\!\cup& \big[&\big(\sigma\left(F_1,G_1\right) \cap \sigma\left(F_2,G_2\right)\big) \cap x_2\big] \\
					&\subseteq& &\big(\sigma\left(F_1,G_1\right) \cap x_1\big) \cup \big(\sigma\left(F_2,G_2\right) \cap x_2\big),
				\end{alignat*}
				which is finite. Hence $x_1 \cup x_2 \in \mathcal I(U, V)$.
				\item[(3)] We show that $\mathbb N \notin \mathcal I(U, V)$. If there is any witness $(F,G)$, then $\sigma(F,G) \cap \mathbb N = \sigma(F,G)$ is finite. But by independence $\sigma(F,G)$ is infinite.
			\end{itemize}
		\end{claimproof}
		
		Now, we show that every $x \in \prst(\bbn)$ lies in some $\mathcal F(U, V)$ or $\mathcal I(U, V)$ with $ U,V \in \idl{U} $. If $x \in [\mathbb N]^{<\omega}$, then $x \in \mathcal I(U, V)$ for any $U,V \in \mathcal U$ by definition. 
		Assume $x \in [\mathbb N]^\omega$. By the maximality of the MID family $\mathcal A$, there exist finite disjoint $F,G \subseteq \mathcal A$ such that either $\sigma(F,G) \setminus x $ or $\sigma(F,G) \cap x $ is finite. We can find disjoint $U, V \in \mathcal U$ with $F \subseteq U$, $G \subseteq V$. Then $(F,G) \in \langle U\rangle \times \langle V\rangle$, and
		\[
		\left\{
		\begin{aligned}
			\text{if }\sigma(F,G) \setminus x \text{ is finite, } &\text{then } x \in \mathcal F(U, V);\\
			\text{if }\sigma(F,G) \cap x \text{ is finite, } &\text{then } x \in \mathcal I(U, V).
		\end{aligned}
		\right.
		\]
		Since $\mathcal U$ is countable, we have that
		\[
		\mathcal P(\mathbb N) = \bigcup_{\substack{U, V \in \mathcal U \\ U \cap V = \emptyset}} \big( \mathcal F(U, V) \cup \mathcal I(U, V) \big)
		\]
		is a countable union of filters and ideals.
		
		Finally, we show that $\mathcal F(U,V)$ and $\mathcal I(U,V)$ belong to $\Gamma$. We prove that $\mathcal F(U, V) \in \Gamma$. Since the pointwise complement $ x\mapsto \bbn\setminus x $ is a homeomorphism of $ \prst(\bbn) $, we also have that $\mathcal I(U, V) \in \Gamma$.
		
		Define
		\[
		\begin{aligned}
			Z=\big\{ \lb \lb F, G \rb, x\rb &\in K \lb \infsub{\bbn} \rb \times K\lb \infsub{\bbn} \rb \times \prst\lb \bbn\rb:\\
			\lb F, G\rb &\in \langle U\rangle \times \langle V\rangle\cap \finsub{\idl{A}} \times \finsub{\idl{A}} \  \wedge\\
			F\cap G &=\emptyset \ \wedge \ \sigma\lb F,G \rb \cap x \text{ is finite} \big\}.
		\end{aligned}
		\]
		Note that the function $ f_n: \langle x_1, \cdots , x_n \rangle \mapsto \{x_1, \cdots, x_n\} $ is a continuous function from $ \lb \infsub{\bbn} \rb^n $ to $ K(\infsub{\bbn}) $, so we have that
		\[
		\langle U\rangle \times \langle V\rangle\cap \finsub{\idl{A}} \times \finsub{\idl{A}} \in \Gamma.
		\]
		As the rest of conditions for $ Z $ are Borel, we have $ Z\in \Gamma $, and its projection $ \idl{I}(U,V) $ is in $ \Gamma $.
		
		This completes the proof that $\mathcal P(\mathbb N)$ is a countable union of filters and ideals from $\Gamma$.		
	\end{proof}
	
	We recall the following well-known results characterizing the ``smallness" of filters.
	
	\begin{lem}[Sierpi\'nski]\label{lemma:smallidl}
		Let $\idl{F}$ be a proper filter. Then it is Lebesgue measurable if and only if it has measure zero, and it has Baire property if and only if it is meager.
	\end{lem}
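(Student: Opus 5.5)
The plan is to use two facts: a proper filter is disjoint from its image under complementation, and it is invariant under finite modifications. A $0$--$1$ law will then force the trivial alternative. Only the directions ``Lebesgue measurable $\Rightarrow$ null'' and ``Baire property $\Rightarrow$ meager'' need proof, since null sets are measurable and meager sets have the Baire property. Throughout, identify $\mathcal P(\mathbb N)$ with $2^{\mathbb N}$ carrying the product (coin-tossing) measure $\mu$ and the product topology.

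\emph{Step 1: two structural properties of $\mathcal F$.}
\begin{itemize}
\item[(a)] By the paper's convention, the dual ideal $\mathcal F^\ast$ contains all finite sets, so $\mathcal F$ contains all cofinite sets. Hence if $x\in\mathcal F$ and $y$ differs from $x$ by a finite set, then $y\supseteq x\cap(\mathbb N\setminus(x\triangle y))\in\mathcal F$, so $y\in\mathcal F$. Thus $\mathcal F$ is invariant under every map $\phi_t$ that flips the coordinates in a finite set $t$; each such $\phi_t$ is a measure-preserving homeomorphism.
\item[(b)] Let $c(x)=\mathbb N\setminus x$, also a measure-preserving homeomorphism. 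Then $\mathcal F\cap c[\mathcal F]=\emptyset$, since $x,\mathbb N\setminus x\in\mathcal F$ would give $\emptyset\in\mathcal F$, contradicting properness.
\end{itemize}

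\emph{Step 2: $0$--$1$ laws, proved directly in $\mathsf{ZF}$.}

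For measure, suppose $\mathcal F$ is measurable. For $s,s'\in 2^n$ there is a flip $\phi_t$ with $\phi_t[[s]]=[s']$ and $\phi_t[\mathcal F]=\mathcal F$. Hence $\mu(\mathcal F\cap[s])$ is the same for all $s\in 2^n$, so it equals $2^{-n}\mu(\mathcal F)$. This means $\mathcal F$ is independent of every cylinder, hence of every finite union of cylinders. By approximating $\mathcal F$ itself by such unions, which is a countable process needing no choice, we get $\mu(\mathcal F)=\mu(\mathcal F)^2$, so $\mu(\mathcal F)\in\{0,1\}$.

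For category, suppose $\mathcal F$ has the Baire property and is non-meager. Then $\mathcal F$ is comeager in some basic open set $[s]$. Applying the flips $\phi_t$ with $t\subseteq|s|$ shows that $\mathcal F$ is comeager in every $[s']$ with $|s'|=|s|$. These finitely many sets cover the space, so $\mathcal F$ is comeager.

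\emph{Step 3: conclusion.} If $\mu(\mathcal F)=1$, then $\mu(c[\mathcal F])=1$ as well. By Step 1(b) these two sets are disjoint, which is impossible. If $\mathcal F$ is comeager, then so is $c[\mathcal F]$, and two disjoint comeager sets contradict the Baire category theorem for $2^{\mathbb N}$, which holds in $\mathsf{ZF}$. Hence a measurable proper filter is null, and a proper filter with the Baire property is meager.

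The main care point is Step 2. It must be carried out without any appeal to choice beyond what the standard measure and category theory of $2^{\mathbb N}$ already uses in $\mathsf{ZF}$. The approximation of a measurable set by clopen sets, and the localization of a non-meager Baire-property set to a basic open set, are both standard $\mathsf{ZF}$ facts, so I expect no real obstruction.
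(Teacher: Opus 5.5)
Your proof is correct and follows the standard argument behind the paper's proof, which simply cites Bartoszy\'nski--Judah, Theorem 4.1.1: invariance of $\mathcal F$ under finite modifications gives a $0$--$1$ law, and the measure- and category-preserving map $x\mapsto\mathbb N\setminus x$ sends $\mathcal F$ onto a disjoint set, which rules out the measure-one and comeager alternatives. You rightly invoke the paper's convention that $\mathcal F$ contains all cofinite sets, which is needed since principal filters such as $\{x : 0\in x\}$ are counterexamples; your check that the argument goes through in $\mathsf{ZF}$ is also a useful addition, because the paper later applies the lemma under only $\mathsf{CC}_{\mathbb R}$.
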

	\begin{proof}
		See \cite[~Theorem 4.1.1]{bartoszynski1995set}. Note that it is also true for proper ideals.
	\end{proof}
	
	It is well known that all sets being Ramsey implies all filters are meager: it is a theorem of Mathias that $\omega\rtar(\omega)^{\omega}$ implies that all filters on $\omega$ are feeble \cite{mathias1975remark}; also, by M. Talagrand, a filter is feeble if and only if it is meager \cite{talagrand1980compacts}. Below we restate the relativized version of the theorem stated in \cite[~Theorem 7.2]{neeman2018happy}.
	
	\begin{lem}\label{lemma:ramidl}
		If all subsets of $\infsub{\bbn}$ in $\Gamma$ are Ramsey, then all proper filters in $\Gamma$ are meager.
	\end{lem}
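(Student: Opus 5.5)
The plan is to code a proper filter $\mathcal F\in\Gamma$ by a set $P\subseteq\infsub{\bbn}$ in $\Gamma$. Ramsey homogeneity of $P$ will then either be contradictory outright or produce an interval partition witnessing that $\mathcal F$ is meager. By the paper's convention, the dual ideal $\mathcal F^\ast$ contains all finite sets, so $\mathcal F$ contains every cofinite set and is closed under finite modifications. The criterion for meagerness I will use is Talagrand's characterization: $\mathcal F$ is meager iff there is a partition of $\bbn$ into finite intervals $I_0<I_1<\cdots$ such that every $A\in\mathcal F$ meets all but finitely many $I_n$. The direction needed here (such a partition implies meager) is elementary and provable in $\zfaxiom$: for each $m$, the set $\{A: A\cap I_n\neq\emptyset \text{ for all } n\ge m\}$ is closed and nowhere dense, and $\mathcal F$ is contained in their countable union.

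For $x\in\infsub{\bbn}$ put
\[
U(x)=\bigcup_{n\in\bbn}[x(2n),x(2n+1)),
\]
the union of the ``even blocks'' of $x$. The map $x\mapsto U(x)$ from $\infsub{\bbn}$ to $\prst(\bbn)$ is continuous, since whether $k\in U(x)$ depends only on $x\cap[0,k]$. Hence $P=\{x\in\infsub{\bbn}: U(x)\in\mathcal F\}$ is a continuous preimage of a set in $\Gamma$, so $P\in\Gamma$ and $P$ is Ramsey. Let $H$ be homogeneous for $P$. There are two cases.

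\emph{Case $[H]^\omega\subseteq P$.} Take $y=H$ and $y'=H\setminus\{\min H\}$. Then $U(y)$ and $U(y')$ are disjoint, since they consist of the alternate intervals $[H(n),H(n+1))$. Both are in $\mathcal F$, so $\emptyset\in\mathcal F$, contradicting properness.

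\emph{Case $[H]^\omega\cap P=\emptyset$.} Here I will show that the partition $I_n=[H(n),H(n+1))$ (with $I_{-1}=[0,H(0))$ absorbed into the first piece) witnesses that $\mathcal F$ is meager. Suppose towards a contradiction that some $A\in\mathcal F$ misses infinitely many intervals $I_{n}$. Choose a sparse infinite set $n_0<n_1<\cdots$ of such indices with $n_{k+1}\ge n_k+2$. Define $y\in[H]^\omega$ so that its odd blocks are exactly the missed intervals $I_{n_k}$, i.e.\ $y(2k+1)=H(n_k)$ and $y(2k+2)=H(n_k+1)$, with $y(0)=H(0)$. Then the even blocks of $y$ cover $[H(0),\infty)\setminus\bigcup_k I_{n_k}$, so $A\subseteq U(y)$ up to a finite set. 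Since $\mathcal F$ is upward closed and closed under finite modifications, $U(y)\in\mathcal F$, i.e.\ $y\in P\cap[H]^\omega$, a contradiction. So every $A\in\mathcal F$ meets almost all $I_n$, and $\mathcal F$ is meager by the criterion above.

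The main point needing care is the second case: getting the bookkeeping of $y$ right (the sparseness condition guarantees that $y$ is strictly increasing and that its odd blocks are exactly the chosen gaps), and making sure that only the elementary, choice-free direction of Talagrand's criterion is used. No choice beyond $\zfaxiom$ is needed in either case, since $y$ is defined explicitly from $A$ and $H$.
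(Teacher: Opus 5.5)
Your proof is correct and is essentially the paper's argument, run directly from the homogeneous set rather than by contradiction. Your $P$ is the paper's set $E(\mathcal I)$ written for the dual filter (up to dropping $\min x$). Your first case is its alternate-blocks step with $x_1$ and $x_2=x_1\setminus\{x_1(0)\}$. Your second case is the contrapositive of its appeal to the interval-partition characterization of meager filters; only the easy, choice-free direction of that characterization is needed, and you prove it. The one bookkeeping detail to add is choosing $n_0\ge 1$, so that $y(0)=H(0)<H(n_0)=y(1)$.
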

	\begin{proof}
		We argue with ideals. If not, let $\idl{I}$ be a non-meager proper ideal in $\Gamma$. So by Lemma \ref{lemma:smallidl} it does not have the Baire property. By \cite[~Theorem 4.1.2]{bartoszynski1995set} we have that
		\[
		\forall x\in \infsub{\bbn} \ \Big[ \exists A \in\idl{I} \ \big[ \text{$\{i\in\bbn:[x(i),x(i+1))\subseteq A\}$ is infinite}\big]\Big]. 
		\]
		Define
		\[
		E(\idl{I})=\left\{x\in \bbn: \bigcup_{i\in\bbn}[x(2i),x(2i+1))\in \idl{I}\right\},
		\]
		which is in $\Gamma$. We show that $E(\idl{I})$ is not Ramsey.
		
		For any $x\in\infsub{\bbn}$, we obtain some $A\in\idl{I}$ as above, and let
		\[
		I = \{i\in\bbn:[x(i),x(i+1))\subseteq A\}.
		\]
		Define $x_1\in\infsub{\bbn}$ such that 
		\[
		\forall n\in\bbn \ \big[ x_1(2n)=x(I(2n)), x_1(2n+1)=x(I(2n)+1) \big].
		\]
		So $x_1\in E(\idl{I})$. Let $ x_2 = x_1\backslash\{x_1(0)\} $. Since
		\[
		\bbn \setminus \lb\bigcup_{i\in\bbn}\big[x_1(2i),x_1(2i+1)\big) \sqcup \bigcup_{i\in\bbn}\big[x_2(2i),x_2(2i+1)\big)\rb \text{ is finite},
		\]
		by properness we have $x_2\notin E(\idl{I})$. This means that for any real $ x\in \infsub{\bbn} $ we have a subreal in $ E(\idl{I}) $ and a subreal not in $ E(\idl{I}) $. So $E(\idl{I})$ is not Ramsey. Thus, $ \idl{I} $ must be meager.
	\end{proof}
	
	We now combine the decomposition theorem with known results on filters and ideals to obtain the main result of this section.
	
	\begin{thm}[$\zfaxiom + \mathsf{CC}_{\mathbb R}$]\label{thm:main-mid}
		Let $\Gamma$ be a good pointclass. If every set in $\Gamma$ is Lebesgue measurable, has Baire property or Ramsey property, then $\Gamma$ contains no MID family.
	\end{thm}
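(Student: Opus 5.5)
Suppose toward a contradiction that $\mathcal A\in\Gamma$ is a MID family. By Lemma \ref{lemma:mid-union}, we can write
\[
\mathcal P(\mathbb N)=\bigcup_{k\in\mathbb N}\mathcal J_k ,
\]
where each $\mathcal J_k$ is one of the sets $\mathcal F(U,V)$ or $\mathcal I(U,V)$ with $U,V\in\mathcal U$ disjoint, and each lies in $\Gamma$. The index set of pairs from the countable family $\mathcal U$ can be enumerated explicitly, so no choice is needed to list the $\mathcal J_k$. We discard the empty ones. Claim \ref{claim:theyareideals} then makes each remaining $\mathcal F(U,V)$ a proper filter and each $\mathcal I(U,V)$ a proper ideal, since they are dual under $x\mapsto\mathbb N\setminus x$. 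The point of the argument is that each piece is small in the relevant sense, while a countable union of small sets cannot cover $\mathcal P(\mathbb N)$.

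In the Lebesgue measure case, each $\mathcal J_k$ is a measurable proper filter or ideal, so it is null by Lemma \ref{lemma:smallidl}. The Baire property case is the same: each $\mathcal J_k$ is meager by Lemma \ref{lemma:smallidl}. In the Ramsey case, each proper filter in $\Gamma$ is meager by Lemma \ref{lemma:ramidl}. Each proper ideal $\mathcal I(U,V)$ is the image of the filter $\mathcal F(U,V)$ under the complement homeomorphism, so it is meager too. Strictly, Lemma \ref{lemma:ramidl} only concerns subsets of $[\mathbb N]^\omega$, but removing the countable set $[\mathbb N]^{<\omega}$ does not affect meagerness. So in all three cases $\mathcal P(\mathbb N)$ would be a countable union of null sets, or of meager sets.

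To reach a contradiction in $\mathsf{ZF}+\mathsf{CC}_{\mathbb R}$, I would avoid the general fact that countable unions of null (meager) sets are null (meager), which needs choice. Instead I would use $\mathsf{CC}_{\mathbb R}$ only to choose, for each $k$, a witness of smallness coded by a real. In the measure case this is a $G_\delta$ null set $N_k\supseteq\mathcal J_k$. In the category case it is a sequence of closed nowhere dense sets covering $\mathcal J_k$. Then $\bigcup_k N_k$ is a Borel null set, respectively a meager $F_\sigma$ set, and it contains all of $\mathcal P(\mathbb N)$. This contradicts $\mu(\mathcal P(\mathbb N))=1$, respectively the Baire category theorem, and both of these are provable in $\mathsf{ZF}$ for Borel sets with explicit codes. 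Choosing these witnesses is the one place countable choice is used. It is also presumably where the hypothesis is essential, since each witness is a real and the family of admissible witnesses for each $k$ is a nonempty set of reals.

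I expect the main obstacle to be the Ramsey case rather than the covering step. Lemma \ref{lemma:ramidl} is itself proved by passing through Lemma \ref{lemma:smallidl}, that is, through the Baire property of the filter. So I need to check that the combinatorial characterization it cites, \cite[Theorem 4.1.2]{bartoszynski1995set}, holds in $\mathsf{ZF}+\mathsf{CC}_{\mathbb R}$ for a non-meager ideal; if it does not, I would rework that step by hand using interval partitions. A minor additional check is that the ideals $\mathcal I(U,V)$, and not only the filters, are handled correctly, which the complement homeomorphism takes care of.
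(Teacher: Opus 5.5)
Your proposal is correct and is essentially the paper's proof: decompose $\mathcal P(\mathbb N)$ by Lemma~\ref{lemma:mid-union}, make each piece null or meager by Lemma~\ref{lemma:smallidl} or Lemma~\ref{lemma:ramidl}, and contradict countable additivity or the Baire category theorem; your choice of coded witnesses via $\mathsf{CC}_{\mathbb R}$ just spells out the paper's closing remark that $\mathsf{CC}_{\mathbb R}$ suffices. The step you flagged is not an obstacle, because Lemma~\ref{lemma:ramidl} only needs the easy, $\mathsf{ZF}$-provable direction of the interval-partition characterization (if no member of an ideal contains infinitely many intervals $[x(i),x(i+1))$, the ideal lies in the explicit union over $n$ of the closed nowhere dense sets $\{A:\forall i\geq n\ [x(i),x(i+1))\not\subseteq A\}$), and the only case where $\mathcal I(U,V)$ is not the complement-image of $\mathcal F(U,V)$ is $\mathcal F(U,V)=\emptyset$, where $\mathcal I(U,V)=[\mathbb N]^{<\omega}$ is countable and trivially small.
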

	
	\begin{proof}
		Assume $\Gamma$ contains a MID family $\mathcal A$. By Lemma \ref{lemma:mid-union}, the Cantor space $ \prst(\bbn) $ is a countable union of proper filters and proper ideals in $ \Gamma $.
		\begin{itemize}
			\item[(1)] If all sets in $ \Gamma $ are Lebesgue measurable, then by Lemma \ref{lemma:smallidl} and the countable additivity of measure, $ \prst(\bbn) $ is null, which is impossible.
			\item[(2)] Similarly, if all sets in $ \Gamma $ have the Baire property, then $ \prst(\bbn) $ is meager, contradicting the Baire category theorem.
			\item[(3)] Also, if all sets in $ \Gamma $ have the Ramsey property, then by Lemma \ref{lemma:ramidl} the space $ \prst(\bbn) $ is also meager, which is impossible. 
		\end{itemize}
		Hence $\Gamma$ cannot contain a MID family. Note that $\ccaxiom_\bbr$ is sufficient for all the arguments above.
	\end{proof}
	
	As an immediate application of Theorem \ref{thm:main-mid}, we obtain the following result under the \emph{Axiom of Determinacy} $ \adaxiom $.
	
	\begin{cor} [$\zfaxiom + \adaxiom$]
		There are no MID families.		
	\end{cor}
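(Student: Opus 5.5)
Under $\zfaxiom+\adaxiom$ we will take $\Gamma$ to be the pointclass of all subsets of Polish spaces and apply Theorem~\ref{thm:main-mid}. This $\Gamma$ is trivially good: it contains the clopen sets and is closed under every operation in the definition. The work is in checking the two hypotheses of Theorem~\ref{thm:main-mid} inside the $\adaxiom$ world.

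First, the choice hypothesis. It is classical that $\adaxiom$ implies $\ccaxiom_\bbr$. Given a countable family $\{X_n:n\in\bbn\}$ of nonempty sets of reals, consider the game in which player I plays $n$ and player II answers with a real $y$, winning iff $y\in X_n$. Player I has no winning strategy, since any strategy for I fixes its first move $n$, and II can then answer with any element of $X_n$. By $\adaxiom$, player II has a winning strategy $\tau$. Coding reals as elements of $\bairesp$, the map $n\mapsto\tau(n)$ is then a choice function, and since $\bbr$ is Borel isomorphic to $\bairesp$ this gives $\ccaxiom_\bbr$ for sets of reals. So the ambient theory of Theorem~\ref{thm:main-mid} is available.

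Second, the regularity hypothesis. It suffices to verify one of the three properties for every set, and Baire property is the easiest. Under $\adaxiom$, every subset of a Polish space has the Baire property, via the Banach--Mazur game: for $A\subseteq\bairesp$, determinacy of the games unfolded relative to basic open sets yields that $A$ is either meager or comeager in some basic open set, and this localized dichotomy gives the Baire property. Lebesgue measurability under $\adaxiom$, by Mycielski--Świerczkowski, would serve equally well. The Ramsey property is also fine, since under $\adaxiom$ all sets are completely Ramsey, but this is not needed. Transferring from $\bairesp$ to $\prst(\bbn)$ or $\infsub{\bbn}$ is routine through Borel isomorphisms that preserve meagerness on comeager pieces.

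Theorem~\ref{thm:main-mid} then yields that $\Gamma$, and hence the universe, contains no MID family. The main obstacle is only bookkeeping: the Banach--Mazur argument must be run without choice, with the choice it uses covered by $\ccaxiom_\bbr$. The cleanest route is to cite the standard fact that $\adaxiom$ implies every set of reals has the Baire property, as in \cite{kechris2012classical}.
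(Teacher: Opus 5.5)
Your argument is correct and matches the paper's proof: take $\Gamma$ to be all subsets of Polish spaces, use that $\adaxiom$ gives $\ccaxiom_\bbr$ and the Baire property for every set, and apply Theorem~\ref{thm:main-mid}. One aside should be deleted: it is not known that $\adaxiom$ implies all sets are (completely) Ramsey, since that is exactly the open Problem~\ref{pblm:adramsey}, and a positive answer would settle Problem~\ref{pblm:admad} via Theorem~\ref{theorem:introduction}; it does no harm to your proof because you never use it.
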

	
	\begin{proof}
		Under $ \adaxiom $, every set of reals has the Baire property and $ \ccaxiom_\bbr $ holds. Let $\Gamma$ be the pointclass of all subsets of Polish spaces. Theorem \ref{thm:main-mid} implies that $\Gamma$ contains no MID family. Hence no MID family exists.
	\end{proof}

	\section{Ramsey property  and \texorpdfstring{$\mathcal I$}{I}-MAD family}\label{section:imad}
	
	Having established that the Ramsey property eliminates ordinary MAD families, we now extend this result to $ \idl{I} $-MAD families for eventually different ideals and iterated Fubini products of Fr\'echet ideals. We introduce the notion of \emph{dichotomously coded} ideals and show that for any such ideal $\mathcal I$, the Ramsey property precludes the existence of $\mathcal I$-MAD families, provided a sufficiently definable coding operator exists. We then verify that several classes of ideals satisfy this property, including $\mathcal{ED}$, $\mathcal{ED}_{\mathrm{fin}}$, and $\finalpha{\alpha}$ for all countable ordinals $\alpha$.
	
	Motivated by the proof of Theorem \ref{thm:mad-ramsey}, we define the following class of ideals:
	
	\begin{defn}\label{def:dichotomous-coding}
		We say an ideal $\mathcal I \subseteq \mathcal P(\mathbb N)$ is \textbf{dichotomously coded} if for every Dedekind-infinite $\mathcal I$-MAD family $\mathcal A$, there exists a \textbf{coding operator} $\Phi: [\mathbb N]^\omega \to \mathcal I^+$ satisfying the following conditions:
		\begin{itemize}
			\item[(1)] For every $H \in [\mathbb N]^\omega$, there exist $y \in [H]^\omega$ and $A \in \mathcal A$ such that $\Phi(y) \setminus A\in \mathcal I$.
			
			\item[(2)] For every $H \in [\mathbb N]^\omega$, there exist $y \in [H]^\omega$ and $A \in \mathcal A$ such that both $A \cap \Phi(y) \in \mathcal I^+$ and $\Phi(y) \setminus A \in \mathcal I^+$.
		\end{itemize}
	\end{defn}
	
	The following proposition is a straightforward adaptation of Theorem \ref{thm:mad-ramsey}.	
	
	\begin{prop}[$ \zfaxiom $]\label{prop:ramseyimplynoedmad}
		Let $\Gamma$ be a good pointclass and $\mathcal I \subseteq \mathcal P(\mathbb N)$ be a dichotomously coded ideal. If every set in $\Gamma$ has the Ramsey property and there exists a $\Gamma$-measurable coding operator for $\mathcal I$, then $\Gamma$ contains no Dedekind-infinite $\mathcal I$ MAD family.
	\end{prop}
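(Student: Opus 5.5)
We argue exactly as in the proof of Theorem~\ref{thm:mad-ramsey}. Suppose $\mathcal A \in \Gamma$ is a Dedekind-infinite $\mathcal I$-MAD family, and fix a $\Gamma$-measurable coding operator $\Phi : [\mathbb N]^\omega \to \mathcal I^+$ for $\mathcal A$, which exists by hypothesis. Consider
\[
P = \{ z \in [\mathbb N]^\omega : \exists A \in \mathcal A \ [\Phi(z) \setminus A \in \mathcal I] \}.
\]
The plan is to show first that $P \in \Gamma$, and then that $P$ is not Ramsey. This contradicts the assumption that every set in $\Gamma$ has the Ramsey property.

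For the definability step, write $P$ as the projection onto the first coordinate of
\[
\{(z,A) \in [\mathbb N]^\omega \times \mathcal P(\mathbb N) : A \in \mathcal A \ \wedge \ \Phi(z)\setminus A \in \mathcal I\}.
\]
The condition $A \in \mathcal A$ is in $\Gamma$, and so is its product with $[\mathbb N]^\omega$. The set $\{(u,A) : u \setminus A \in \mathcal I\}$ is analytic, since $\mathcal I$ is an analytic ideal in all intended applications; I would add this as an implicit standing hypothesis, or absorb it into the meaning of ``$\Gamma$-measurable coding operator''. Its preimage under the map $(z,A) \mapsto (\Phi(z),A)$ is then in $\Gamma$. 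Closure of $\Gamma$ under countable intersections and continuous images (projections) then gives $P \in \Gamma$.

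This is the step I expect to be the main obstacle. The map $\Phi$ is only $\Gamma$-measurable, not continuous, so the graph of $\Phi$ must itself be shown to lie in $\Gamma$. To do this, write
\[
\operatorname{graph}(\Phi) = \{(z,u) : \forall k\,[\,u \in U_k \to z \in \Phi^{-1}[U_k]\,]\},
\]
where $\{U_k\}$ is a clopen basis of $\mathcal P(\mathbb N)$. This is a countable intersection of sets of the form $(\Phi^{-1}[U_k] \times \mathcal P(\mathbb N)) \cup ([\mathbb N]^\omega \times (\mathcal P(\mathbb N)\setminus U_k))$. Each such set is in $\Gamma$, because $\Gamma$ contains clopen sets and is closed under finite products, finite unions and countable intersections. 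Having the graph in $\Gamma$, we can replace the preimage above by an intersection with the graph, followed by a projection, which keeps us inside $\Gamma$. Analytic sets are continuous images of the closed set $\omega^\omega$, so they belong to $\Gamma$; this is what lets the $\mathcal I$-condition be intersected in.

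For non-Ramseyness, suppose first that $H$ satisfies $[H]^\omega \cap P = \emptyset$. Condition (1) of Definition~\ref{def:dichotomous-coding} gives $y \in [H]^\omega$ with $y \in P$, which is a contradiction. Suppose instead that $[H]^\omega \subseteq P$. Condition (2) gives $y \in [H]^\omega$ and $A \in \mathcal A$ with $A\cap\Phi(y) \in \mathcal I^+$ and $\Phi(y)\setminus A \in \mathcal I^+$. Since $y \in P$, there is $C \in \mathcal A$ with $\Phi(y)\setminus C \in \mathcal I$. Then
\[
A \cap \Phi(y) \subseteq (A\cap C) \cup (\Phi(y)\setminus C).
\]
Because $\mathcal I$ is an ideal and $A \cap \Phi(y) \in \mathcal I^+$, this forces $A \cap C \in \mathcal I^+$, so $A = C$ by $\mathcal I$-almost disjointness. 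Hence $\Phi(y)\setminus A \in \mathcal I$, a contradiction. No choice beyond $\mathsf{ZF}$ is used, since the Dedekind-infiniteness of $\mathcal A$ is only needed for the existence of $\Phi$, which is given.
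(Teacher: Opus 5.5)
Your proposal is correct and takes the same approach as the paper: it reruns the argument of Theorem~\ref{thm:mad-ramsey} with $P=\{z:\exists A\in\mathcal A\,[\Phi(z)\setminus A\in\mathcal I]\}$, using the same two-case non-Ramsey analysis you give. Your extra definability work, namely putting the graph of $\Phi$ in $\Gamma$ and assuming $\mathcal I$ analytic (true for $\mathcal{ED}$, $\mathcal{ED}_{\mathrm{fin}}$ and $\finalpha{\alpha}$, which are Borel), makes explicit a hypothesis the paper uses without stating it when it treats $P\in\Gamma$ as immediate.
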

	
	\begin{proof}
		The proof follows exactly the same argument as in Theorem \ref{thm:mad-ramsey}, with the following substitutions:
		\begin{itemize}
			\item[(1)] The ordinary MAD family $\mathcal A$ in Theorem \ref{thm:mad-ramsey} is replaced by the $\mathcal I$-MAD family $\mathcal A$.
			\item[(2)] The operator $\Phi(x)$ defined by $\{A_{x(2n)}(x(2n+1)): n\in\mathbb N\}$ is replaced by the $\Gamma$-measurable coding operator $\Phi$ from Definition \ref{def:dichotomous-coding}.
			\item[(3)] The set $P = \{z \in [\mathbb N]^\omega: \exists A \in \mathcal A, \  \Phi(z)\subseteq A\}$ becomes
			\[
			P = \{z \in [\mathbb N]^\omega: \exists A \in \mathcal A,\ \Phi(z) \setminus A\in \mathcal I\}.
			\]
		\end{itemize}
		The rest of the proof is identical, showing that $P$ is not Ramsey, contradicting the hypothesis that all $\Gamma$-sets are Ramsey.
	\end{proof}

	\subsection{\texorpdfstring{$\mathcal{ED}$}{ED} and \texorpdfstring{$\mathcal{ED}_{\textnormal{fin}}$}{ED\textunderscore{}fin}-MAD Family}
	
	\phantom{a}
	\vspace{6pt}
	
	We now show that the eventually different ideals $\mathcal{ED}$ and $\mathcal{ED}_{\text{fin}}$ belong to the class of dichotomously coded ideals introduced in Definition \ref{def:dichotomous-coding}. This will allow us to apply Proposition \ref{prop:ramseyimplynoedmad} and conclude that the Ramsey property precludes the existence of $\mathcal{ED}$-MAD and $\mathcal{ED}_{\text{fin}}$-MAD families. We first introduce some notation and a special subclass of $\mathcal{ED}^+$.
	
	\begin{defn}
		Define $\mathcal{ED}^{++}$ to be the collection of all $X \in \mathcal{ED}^+$ satisfying the following condition: if $m$ is the $n$-th element of $\mathrm{dom}(X)$, then $|X(m)| = n+1$. We can also define $ \evendiff_{\text{fin}}^{++} \subseteq \evendiff_{\text{fin}}^{+}$ in the same way.
	\end{defn}
	
	For each $X \in \mathcal{ED}^{++}$ and integers $k \le n$, we denote
	\[
	X [ n,k ] = (m_n, m_k),
	\]
	where $m_n$ is the $n$-th element of $\mathrm{dom}(X)$ and $m_k$ is the $k$-th element of $X(m_n)$. We denote $ m_n $ by $ X[n] $.
	
	We need the following fact.
	
	\begin{fact}
		For any $ X \in \evendiff^+$, there is a subset $ Y\subseteq X $ such that $ Y\in\evendiff^{++} $. This also holds for $ \evendiff_{\textnormal{fin}} $.
	\end{fact}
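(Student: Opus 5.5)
The plan is to thin $X$ column by column, directly from the definition of $\evendiff$. First I extract the combinatorial content of $X\in\evendiff^+$. Since $X\notin\evendiff$, for every $n$ there is some $m\ge n$ with $|X(m)|>n$. From this I will derive:

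\[
\text{for every } n, \text{ the set } M_n=\{m\in\bbn : |X(m)|\ge n+1\} \text{ is infinite.}
\]

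To see this, take any $N\ge n$. Applying the negation of the $\evendiff$ condition with $N$ gives some $m\ge N$ with $|X(m)|>N\ge n$, so $m\in M_n$. Since $N$ was arbitrary, $M_n$ is unbounded.

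Next I choose columns recursively, with no use of choice, by always taking minima. Let $m_0=\min M_0$, and let $m_{n+1}=\min\{m\in M_{n+1}: m>m_n\}$, which exists because $M_{n+1}$ is infinite. Then I define $Y\subseteq X$ by setting $Y(m_n)$ to be the set of the first $n+1$ elements of $X(m_n)$, which is possible since $|X(m_n)|\ge n+1$, and setting $Y(m)=\emptyset$ for $m\notin\{m_n:n\in\bbn\}$. Clearly $Y\subseteq X$.

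The remaining work is to verify both parts of the $\evendiff^{++}$ requirement, and I will check each explicitly rather than assume it.

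\emph{The counting condition.} Every $Y(m_n)$ is nonempty, since it has $n+1\ge 1$ elements, and every other column of $Y$ is empty. Hence $\dom(Y)=\{m_n:n\in\bbn\}$. The sequence $m_0<m_1<\cdots$ is strictly increasing, so $m_n$ is exactly the $n$-th element of $\dom(Y)$. Therefore $|Y(m)|=n+1$ whenever $m$ is the $n$-th element of $\dom(Y)$, which is precisely the defining condition.

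\emph{Positivity, $Y\in\evendiff^+$.} Suppose toward a contradiction that $Y\in\evendiff$, witnessed by some $N$. Consider $m_N$. Since the $m_k$ are strictly increasing naturals, $m_N\ge N$. Moreover $|Y(m_N)|=N+1>N$. This contradicts the witness, so $Y\notin\evendiff$ and hence $Y\in\evendiff^{++}$.

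For $\evendiff_{\mathrm{fin}}$, assume $X\subseteq\Delta$ with $X\in\evendiff_{\mathrm{fin}}^+$. Since $\evendiff_{\mathrm{fin}}$ is the restriction of $\evendiff$ to $\Delta$, this means $X\notin\evendiff$, so the same construction applies verbatim. The resulting $Y$ satisfies $Y\subseteq X\subseteq\Delta$ and $Y\notin\evendiff$, so $Y\in\evendiff_{\mathrm{fin}}^{++}$.

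The only step needing care is the first: turning the single-witness negation of the $\evendiff$ definition into "infinitely many columns of size at least $n+1$." Without that, the recursion could stall. The positivity check also uses the easy but essential inequality $m_N\ge N$.
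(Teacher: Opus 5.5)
Your proof is correct and complete. The paper states this fact without proof, and your column-by-column thinning, always taking least elements, is the natural argument it leaves implicit. One point in your favour: because every step is a minimum, your $Y$ is determined canonically by $X$, and that is exactly what the paper needs when it shrinks the whole sequence $\langle A_n : n\in\bbn\rangle$ to $\evendiff^{++}$ sets at once while working only in $\zfaxiom$.
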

	
	\begin{prop}\label{prop:eddichotomous}$\mathcal{ED}$ and $\mathcal{ED}_{\textnormal{fin}}$ are dichotomously coded.	
	\end{prop}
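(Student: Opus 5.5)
Given a Dedekind-infinite $\mathcal{ED}$-MAD family $\mathcal A$, I will mimic the proof of Theorem \ref{thm:mad-ramsey}. Fix distinct $A_n\in\mathcal A$, $n\in\bbn$. By the Fact just stated, pass to subsets lying in $\mathcal{ED}^{++}$. Then shrink so that the $A_n$ are pairwise disjoint, and in fact have pairwise disjoint domains. For the domains, replace $A_n$ by its restriction to columns beyond those already used by $A_0,\dots,A_{n-1}$ and also remove $\bigcup_{i<n}A_i$. Since $A_n\cap A_i\in\mathcal{ED}$, the set $A_n\setminus\bigcup_{i<n}A_i$ is still $\mathcal{ED}$-positive, so we can pass again to an $\mathcal{ED}^{++}$ subset and discard finitely many columns.

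The natural analogue of $(\ast)$ codes one column per pair of consecutive elements of $x$:
\[
\Phi(x)=\bigcup_{n\in\bbn}\big\{A_{x(2n)}[x(2n+1),k] : k\le n\big\}.
\]
Each piece lies in the column $A_{x(2n)}[x(2n+1)]$. These columns are distinct: the indices $x(2n)$ are distinct, so the columns come from different domains. The $n$-th column used carries $n+1$ points, so $\Phi(x)\in\mathcal{ED}^+$ and indeed $\Phi(x)\in\mathcal{ED}^{++}$. Every $A_n$ meets $\Phi(x)$ in at most one column, so $A_n\cap\Phi(x)\in\mathcal{ED}$. Continuity of $\Phi$ is checked exactly as in Theorem \ref{thm:mad-ramsey}. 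Each point of $\Phi(x)$ is determined by finitely many values of $x$, and membership of a given pair $(m,j)$ involves only boundedly many $n$ because the domains increase.

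For condition (1), maximality gives $A\in\mathcal A$ with $A\cap\Phi(x)\in\mathcal{ED}^+$, and $A\ne A_n$ for all $n$. The main obstacle is the re-indexing. In the MAD proof one simply keeps the pairs $(x(2n_k),x(2n_k+1))$ that hit $A$. Here thinning $x$ changes $n$, and hence how many points of each column are taken, so $\Phi(y)$ is not literally a subset of $\Phi(x)$. To handle this, I use that $A\cap\Phi(x)\notin\mathcal{ED}$. This yields infinitely many $n$ for which $|A\cap \Phi(x)(m)|$ is large on the column $m$ coded by the pair $(x(2n),x(2n+1))$, and I choose these recursively so that the $k$-th chosen column contains at least $k+1$ points of $A$. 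I then modify the coding so that $\Phi$ takes the \emph{first} $n+1$ points of a column only up to a tolerance. The cleaner alternative is to define $\Phi(x)$ using, for the $n$-th pair, the points $A_{x(2n)}[x(2n+1),k]$ for $k\le x(2n-1)$ (with $x(-1)=0$), so that deleting elements only increases the number of points taken relative to the new index. With this choice a recursive selection makes $\Phi(y)\setminus A$ have at most $n$ points in the $n$-th column for all but finitely many columns, i.e. $\Phi(y)\setminus A\in\mathcal{ED}$. I expect tuning this definition so that (1) holds while $\Phi$ stays continuous to be the delicate step.

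Condition (2) then follows as in Claim \ref{claim:dim1split}. Starting from $y$ with $\Phi(y)\setminus A\in\mathcal{ED}$, the alternate steps of the recursion choose pairs whose column $A_{y(i)}[y(i+1)]$ meets $A$ in at most a bounded number of points, which is possible since $A\cap A_{y(i)}\in\mathcal{ED}$. The other steps keep pairs whose columns lie mostly in $A$. This makes both $A\cap\Phi$ and $\Phi\setminus A$ $\mathcal{ED}$-positive. For $\mathcal{ED}_{\mathrm{fin}}$ the same construction works inside $\Delta$, since the columns of $\mathcal{ED}_{\mathrm{fin}}^{++}$ sets have finite but growing height.
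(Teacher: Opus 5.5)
\paragraph{The gap is in condition (1).} The obstacle is not the re-indexing you point to. In both versions of your $\Phi$, the points taken from the column $A_{x(2n)}[x(2n+1)]$ form an \emph{initial segment} of that column. Passing from $H$ to $y\in[H]^\omega$ changes which columns are used and how long the segments are, but never which points inside a column are taken. So (1) needs a single $A\in\mathcal A$ that contains all but boundedly many points of arbitrarily long initial segments of infinitely many columns. Maximality only gives $A$ with $A\cap\Phi(H)\in\mathcal{ED}^+$, that is, many points of $A$ in some blocks, not almost all of them.

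This genuinely fails. Take pairwise disjoint infinite $D_n\subseteq\bbn$ and $A'_n=D_n\times\bbn$. Let $A_n\subseteq A'_n$ have as its $j$-th column $\{d\}\times[0,j]$, where $d$ is the $j$-th element of $D_n$. By Zorn's lemma, extend $\{A'_n:n\in\bbn\}$ to an $\mathcal{ED}$-almost disjoint family, maximal among those whose other members all lie in $\bbn\times 2\bbn$ or in $\bbn\times(2\bbn+1)$. This family is $\mathcal{ED}$-MAD, because for any $X\in\mathcal{ED}^+$ one of $X\cap(\bbn\times 2\bbn)$ and $X\cap(\bbn\times(2\bbn+1))$ is in $\mathcal{ED}^+$. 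Now every block of every $\Phi(y)$ has the form $\{d\}\times[0,\ell]$, with distinct $d$'s and $\ell\to\infty$. Each new member misses about half of every block, and each $A'_j$ contains at most one block. Hence $\Phi(y)\setminus A\in\mathcal{ED}^+$ for all $y$ and all $A\in\mathcal A$. So (1) fails for every $H$; in fact the set $P$ from the Ramsey argument is empty here, so no contradiction can be extracted.

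\paragraph{The missing idea.} The paper lets entries of $x$ select the points within each column. In the paper's coding operator, the $n$-th block of $x$ has $n+3$ entries:
\begin{itemize}
\item the first picks which $A_{x(\cdot)}$ is used;
\item the last picks the column;
\item the $n+1$ entries in between pick which elements of that column are taken.
\end{itemize}
Because these row selectors are themselves entries of $x$, thinning $H$ lets you keep exactly $k+1$ rows whose points lie in $A$, which gives $\Phi(y)\subseteq A$. For the escaping steps, it equally lets you keep $k+1$ rows outside $A$: go to a column of $A_{y_k}$ beyond the $\mathcal{ED}$-bound $l$ for $A\cap A_{y_k}$, with more than $k+1+l$ candidate row indices from $x$ below it. Your escaping step for (2) is fine in spirit, but as written it starts from (1), which you have not established.

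\paragraph{A smaller point.} Restricting $A_n$ to columns unused by $A_0,\dots,A_{n-1}$ can leave nothing, for example if $\dom(A_0)$ is cofinite. The domains must be thinned simultaneously by dovetailing, as the paper does with its sets $Q_n$.
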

	
	\begin{proof}
		We give the proof for $\mathcal{ED}$; the case of $\mathcal{ED}_{\text{fin}}$ is analogous, using $\mathcal{ED}_{\text{fin}}^{++}$ in place of $\mathcal{ED}^{++}$.
		
		Let $\mathcal A$ be a Dedekind-infinite $\mathcal{ED}$-MAD family. Fix a sequence $\{A_n: n\in \bbn\}$ of distinct elements in $\mathcal A$. We may assume that the domains $\{\mathrm{dom}(A_n): n\in \bbn\}$ are pairwise disjoint and that each $A_n$ belongs to $\mathcal{ED}^{++}$:
		\begin{itemize}
			\item[(1)] We shrink every $ A_n $ so that $ A_n\in \evendiff^{++} $. Note that $ \dom(A_n) $ is infinite.
			\item[(2)] Define pairwise disjoint infinite subsets $ Q_n = \{q_{n,j}:j\in\bbn\} \subseteq \dom(A_n)$ in the following order:
			\[
			q_{0,0}< q_{1,0}< q_{1,1}<q_{0,1}< q_{2,0}<\cdots.
			\]
			This is done by picking $ q_{n,j} \in \dom(A_n)$ such that it is larger than any previously picked $ q $'s. 
			\item[(3)] Replace $ A_n $ by $ \{(j,k)\in A_n: j\in Q_n\} $ and further shrink to $ \evendiff^{++} $ sets.
		\end{itemize}
		Note that for any $ n\in\bbn  $ we have $ A_n[0] < A_{n+1}[0] $.
		
		Define the coding operator $\Phi: [\mathbb N]^\omega \to \mathcal{ED}^+$ as
		\[
		\Phi(x) = \bigcup_{n \in \mathbb N} \bigcup_{i=0}^{n} A_{x\left( \frac{n(n+5)}{2} \right)} \Big[ x\left( \tfrac{(n+1)(n+6)}{2} - 1 \right), x\left( \tfrac{n(n+5)}{2} + 1 + i \right) \Big]. \tag{\dag}
		\]
		
		The definition is illustrated in Figure \ref{fig:tildeed}.
		
		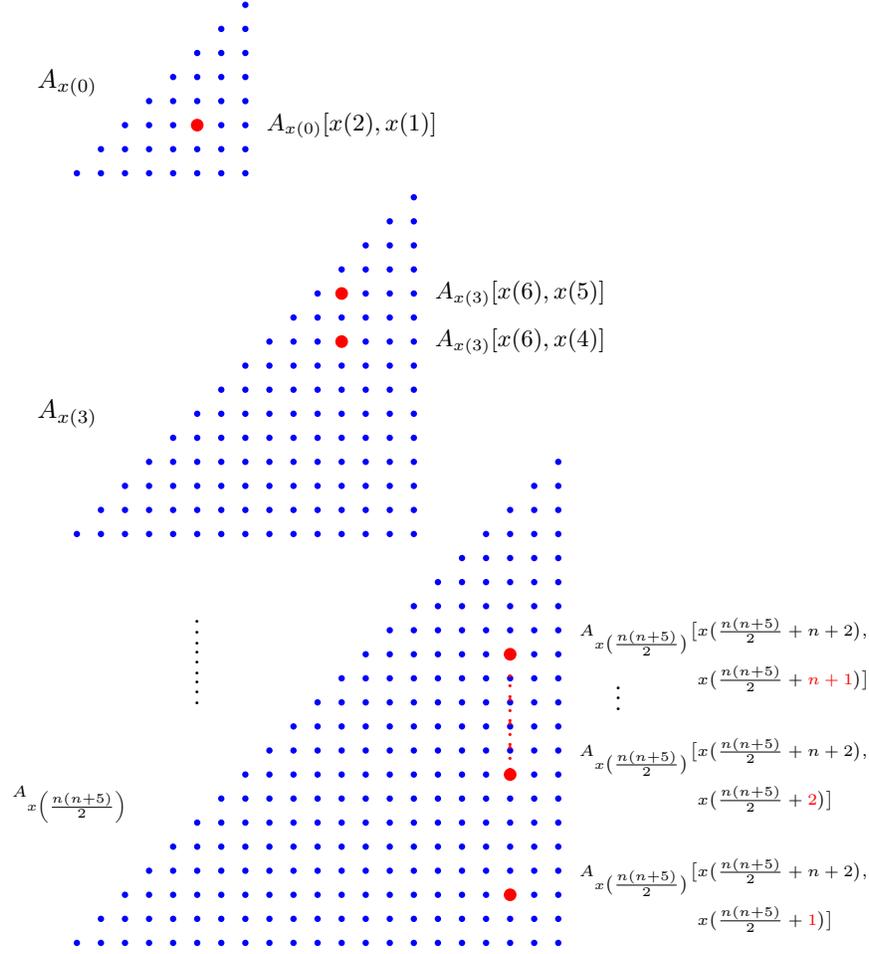
\begin{figure}[h]
			\centering
			\begin{tikzpicture}[scale=0.8]

				\foreach \col in {0,1,2,3,4,5,6,7} {
					\foreach \row in {0,...,\col} {
						\fill[blue] (\col*0.4, \row*0.4) circle(1.5pt);
					}
				}

				\node[left] at (0.5, 1.5) {$A_{x(0)}$};

				\foreach \col in {0,1,2,3,4,5,6,7,8,9,10,11,12,13,14} 	{
					\foreach \row in {0,...,\col} {
						\fill[blue] (\col*0.4, \row*0.4 - 6) circle(1.5pt);
					}
				}

				\node[left] at (0.5, -4.0) {$A_{x(3)}$};

				\fill[red] (5*0.4, 2*0.4) circle(3pt);
				\node[right] at (5*0.4 + 1, 2*0.4) {\small $ A_{x(0)}[x(2),x(1)] $};

				\fill[red] (11*0.4, 8*0.4 - 6) circle(3pt);
				\node[right] at (12*0.4 + 1, 8*0.4 - 6) {\small $ A_{x(3)}[x(6),x(4)] $};
				
				\fill[red] (11*0.4, 10*0.4 - 6) circle(3pt);
				\node[right] at (12*0.4 + 1, 10*0.4 - 6) {\small $ A_{x(3)}[x(6),x(5)] $};

				\node at (2, -7.5) {$\vdots$};
				\node at (2, -8.0) {$\vdots$};
				\node at (2, -8.5) {$\vdots$};

				\foreach \col in {0,1,2,3,4,5,6,7,8,9,10,11,12,13,14,15,16,17,18,19,20} {
					\foreach \row in {0,...,\col} {
						\fill[blue] (\col*0.4, \row*0.4 - 12.8) circle(1.5pt);
					}
				}

				\node[left] at (1, -10.5) {\tiny$A_{x\left(\frac{n(n+5)}{2}\right)}$};

				\fill[red] (18*0.4, 10*0.4 - 12) circle(3pt);
				\node[red, font=\Large] at (18*0.4, 9*0.4 - 12){$\vdots$};
				\node[red, font=\Large] at (18*0.4, 8*0.4 - 12){$\vdots$};
				\node[red, font=\Large] at (18*0.4, 7*0.4 - 12){$\vdots$};
				\node[red, font=\Large] at (18*0.4, 6*0.4 - 12){$\vdots$};
				\fill[red] (18*0.4, 5*0.4 - 12) circle(3pt);
				\fill[red] (18*0.4, 0*0.4 - 12) circle(3pt);
				
				\node[right] at (18*0.4 + 1, 10*0.4 - 12) {\tiny$\begin{aligned}
						A_{x\big(\tfrac{n(n+5)}{2}\big)}\big[&x\big(\tfrac{n(n+5)}{2}+n+2\big),\\
						&x\big(\tfrac{n(n+5)}{2}+\textcolor{red}{n+1}\big)\big]
					\end{aligned}$};
				
				\node at (20*0.4 + 1, 8.5*0.4 - 12) {$\vdots$};
				
				\node[right] at (18*0.4 + 1, 5*0.4 - 12) {\tiny$\begin{aligned}
						A_{x\big(\tfrac{n(n+5)}{2}\big)}\big[&x\big(\tfrac{n(n+5)}{2}+n+2\big),\\
						&x\big(\tfrac{n(n+5)}{2}+\textcolor{red}{2}\big)\big] 
					\end{aligned}$};
				
				\node[right] at (18*0.4 + 1, 0*0.4 - 12) {\tiny$\begin{aligned}
						A_{x\big(\tfrac{n(n+5)}{2}\big)}\big[&x\big(\tfrac{n(n+5)}{2}+n+2\big),\\
						&x\big(\tfrac{n(n+5)}{2}+\textcolor{red}{1}\big)\big] 
					\end{aligned}$};	
			\end{tikzpicture}
			\caption{Definition of $\Phi(x)$ for $x=\{0,2,5,7,8,10,11,\cdots \}$.}
			\label{fig:tildeed}
		\end{figure}
		
		Note that for any $ x\in \infsub{\bbn} $ we have $ \Phi(x)\in\evendiff^+ $. Now we verify that $\Phi$ satisfies the two conditions in Definition \ref{def:dichotomous-coding}.
		
		\begin{claim}{\label{claim:edhide}}
			For every $H \in [\mathbb N]^\omega$, there exist $y \in [H]^\omega$ and $A \in \mathcal A$ such that $\Phi(y) \subseteq  A$.
		\end{claim}
		
		\begin{claimproof}{\ref{claim:edhide}}
			Since $\mathcal A$ is an $\mathcal{ED}$-MAD family and $\Phi(H) \in \mathcal{ED}^+$, by maximality there exists $A \in \mathcal A$ such that $\Phi(H) \cap A \in \mathcal{ED}^+$.
			
			We construct $y \subseteq H$ recursively. For convenience, let $ y(-1) = -1 $. Assume we have already defined $y(j)$ for $j < \frac{k(k+5)}{2}$. We now define the next block of elements corresponding to index $k$, i.e. from $ y\left( \tfrac{k(k+5)}{2} \right) $ to $ y\left( \tfrac{(k+1)(k+6)}{2} - 1 \right) $.
			
			By the definition of $\Phi(H)$, for the fixed $H\left(\frac{(n+1)(n+6)}{2}-1\right)$-th column of $ A_{H\left( \frac{n(n+5)}{2} \right)} $, the vertical indices appearing in $\Phi(H)$ are exactly the values $H(m)$ for $m$ in the interval $\left[\frac{n(n+5)}{2}+1, \frac{(n+1)(n+6)}{2}-1\right)$.	
			Since $\Phi(H) \cap A \in \mathcal{ED}^+$, there are infinitely many such columns for which this intersection contains at least $k+1$ elements. Thus for each $k$ we can find minimal $n_k$ such that
			\[
			H\left( \tfrac{n_k(n_k+5)}{2} \right)>y\left( \tfrac{k(k+5)}{2} -1\right)
			\]
			and the interval
			\[
			\left[ \frac{n_k(n_k+5)}{2}+1,\ \frac{(n_k+1)(n_k+6)}{2} -1\right)
			\]
			contains $k+1$ indices $j^k_0 < \cdots < j^k_k$ satisfying
			\[
			A_{H\left( \frac{n_k(n_k+5)}{2} \right)}\Big[ H\left( \tfrac{(n_k+1)(n_k+6)}{2} - 1 \right), H(j^k_i) \Big] \in A, \text{ for all } i \le k.
			\]
			
			Define $y$ by setting
			\[
			\begin{aligned}
				y\left( \tfrac{k(k+5)}{2} \right) &= H\left( \tfrac{n_k(n_k+5)}{2} \right), \\
				y\left( \tfrac{k(k+5)}{2} + 1 + i \right) &= H(j^k_i), \ 0 \le i \le k, \\
				y\left( \tfrac{(k+1)(k+6)}{2} - 1 \right) &= H\left( \tfrac{(n_k+1)(n_k+6)}{2} - 1 \right).
			\end{aligned}
			\]
			
			By construction, every term in $\Phi(y)$ belongs to $A$, hence $\Phi(y) \subseteq A$.
		\end{claimproof}
		
		\begin{claim}{\label{claim:edsplit}}
			For every $H \in [\mathbb N]^\omega$, there exist $y \in [H]^\omega$ and $A \in \mathcal A$ such that both $A \cap \Phi(y)$ and $\Phi(y) \setminus A$ belong to $\mathcal{ED}^+$.
		\end{claim}
		
		\begin{claimproof}{\ref{claim:edsplit}}
			Fix $H \in [\mathbb N]^\omega$. By Claim \ref{claim:edhide}, pick $A \in \mathcal A$ and $x \in [H]^\omega$ such that $\Phi(x) \subseteq A$.
			
			Note that $A \neq A_n$ for all $n \in \bbn$, so we have $A \cap A_n \in \mathcal{ED}$ for every $n\in\bbn$. 
			This induces an increasing sequence $\{l_n : n \in \bbn\}$ such that for each $n\in\bbn$,
			\[
			\forall m > l_n [ |(A_n \cap A)(m)| < l_n].
			\]
			
			We construct $y \in [x]^\omega$ recursively. Let $ y(-1) = -1 $ and  for each $k \in \bbn$, assume we have already defined $y(j)$ for $j < \frac{k(k+5)}{2}$. We now define $ y\left( \tfrac{k(k+5)}{2} \right),\cdots, y\left( \tfrac{(k+1)(k+6)}{2} - 1 \right) $.
			
			\begin{itemize}
				\item[(1)] When $ k $ is even, we do exactly as in Claim \ref{claim:edhide}.
				\item[(2)]  When $ k $ is odd,
				\begin{itemize}
					\item[i.] Let
					\[
					n_k = \min\left\{n\in \bbn: x\lb \tfrac{n(n+5)}{2} \rb > y\left( \tfrac{k(k+5)}{2}-1 \right)\right\}
					\]
					and define $ y\left( \tfrac{k(k+5)}{2} \right) = x\lb \tfrac{n_k(n_k+5)}{2} \rb $.
					\item[ii.] Let
					\[
					\begin{aligned}
						y\left( \tfrac{(k+1)(k+6)}{2} - 1 \right) = \min\big\{m\in x:\ &m>y_k \ \wedge \ A_{y_k}[m] > l_{y_k}\\
						\wedge \ &\left| \left( y_k, m\right) \cap x  \right|> k+1+ l_{y_k}\big\},
					\end{aligned}
					\]
					where $ y_k = y\left(\tfrac{k(k+5)}{2}\right) $ and $ \left( y_k, m\right) $ denotes an open interval.
					\item[iii.] Denote $ y_{k+1} =y\left( \tfrac{(k+1)(k+6)}{2} - 1 \right) $. By the definition above, we can find
					\[
					y_k < q_0^k < \cdots < q_k^k < y_{k+1}
					\]
					such that for any $ i \leq k, q_i^k \in x $ and
					\[
					A_{ y\left(\tfrac{k(k+5)}{2}\right)}\left[ y\left( \tfrac{(k+1)(k+6)}{2} - 1 \right), q_i^k \right]\notin A, \forall i\leq k.
					\]
					Such $ q_i^k $'s exist, because at this $ y_{k+1} $-th column of $ A_{y_k} $, there are less than $ l_{y_k} $-many indices in $ (y_k, y_{k+1})\cap x $ which substitute into $ A $, and there are more than $ \lb k+1+l_{y_k}\rb $-many points in total. Define
					\[
					y\left(\tfrac{k(k+5)}{2}+i+1\right) = q_i^k, \forall i\leq k.
					\]
				\end{itemize}
			\end{itemize}
			
			By construction, for each $k$, the $k+1$ terms in $\Phi(y)$ coming from index $k$ are all inside $A$ when $k$ is even, and all outside $A$ when $k$ is odd. Hence  both $A \cap \Phi(y)$ and $\Phi(y) \setminus A$  are in $\mathcal{ED}^+$.
		\end{claimproof}
		
		Since $\Phi$ satisfies both conditions of Definition \ref{def:dichotomous-coding}, we conclude that $\mathcal{ED}$ is a dichotomously coded ideal.	
	\end{proof}
	
	We now combine the results of this section to obtain the main theorem on $\mathcal{ED}$ and $\mathcal{ED}_{\text{fin}}$-MAD families.
	
	\begin{thm}[$ \zfaxiom $]\label{theroem:ramseyimplynoed}
		Let $\Gamma$ be a good pointclass. If every set in $\Gamma$ has Ramsey property, then there is no Dedekind-infinite $\mathcal{ED}$-MAD family or Dedekind-infinite $\mathcal{ED}_{\textnormal{fin}}$-MAD family in $\Gamma$.	
	\end{thm}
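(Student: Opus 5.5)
The plan is to obtain Theorem \ref{theroem:ramseyimplynoed} as a direct application of Proposition \ref{prop:ramseyimplynoedmad}. Proposition \ref{prop:eddichotomous} already shows that $\mathcal{ED}$ and $\mathcal{ED}_{\textnormal{fin}}$ are dichotomously coded. What remains is to check that, for a Dedekind-infinite $\mathcal{ED}$-MAD family $\mathcal A\in\Gamma$, the coding operator $\Phi$ from $(\dag)$ is $\Gamma$-measurable; in fact we will show it is continuous. We fix $\mathcal A\in\Gamma$ and a sequence $\{A_n:n\in\bbn\}$ of distinct members, and perform the shrinking steps (1)--(3) from the proof of Proposition \ref{prop:eddichotomous}. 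Those steps only replace the countably many fixed sets $A_n$ and do not change $\mathcal A$ itself, so the family stays in $\Gamma$. Note that Definition \ref{def:dichotomous-coding}(1) is satisfied with the stronger conclusion $\Phi(y)\subseteq A$.

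The key step is continuity of $\Phi:[\bbn]^\omega\to\prst(\bbn\times\bbn)$. We will argue as for $(\ast)$ in Theorem \ref{thm:mad-ramsey}. To decide whether a fixed pair $(m,j)$ lies in $\Phi(x)$, it suffices to inspect the blocks indexed by $n$ for which $A_{x(n(n+5)/2)}$ can contain a point with first coordinate $m$. The domains of the $A_n$ are pairwise disjoint, and $A_n[0]<A_{n+1}[0]$, so $A_{x(n(n+5)/2)}[\,\cdot\,]\ge A_{n(n+5)/2}[0]\ge n$. Hence only finitely many $n$ (roughly $n\le m$) are relevant. For each such $n$, membership of $(m,j)$ depends only on the finitely many values $x(0),\dots,x\bigl(\tfrac{(n+1)(n+6)}{2}-1\bigr)$. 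Therefore the set $\{x:(m,j)\in\Phi(x)\}$ is clopen, and $\Phi$ is continuous. The same argument works for $\mathcal{ED}_{\textnormal{fin}}$ on $\Delta$.

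With continuity in hand, we apply Proposition \ref{prop:ramseyimplynoedmad}. For completeness we recall why the set
\[
P=\{z\in[\bbn]^\omega:\exists A\in\mathcal A\ [\Phi(z)\setminus A\in\mathcal{ED}]\}
\]
is in $\Gamma$. It is the projection of the set of pairs $(z,A)$ with $A\in\mathcal A$ and $\Phi(z)\setminus A\in\mathcal{ED}$. The condition $\Phi(z)\setminus A\in\mathcal{ED}$ is Borel ($\Sigma^0_2$ in $(\Phi(z),A)$), and Borel sets belong to $\Gamma$ because $\Gamma$ contains the clopen sets and is closed under countable unions and intersections. Closure under finite products, intersections and continuous images then gives $P\in\Gamma$.

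The dichotomy argument of Theorem \ref{thm:mad-ramsey}, using Claims \ref{claim:edhide} and \ref{claim:edsplit}, shows that $P$ is not Ramsey. Case (2) uses that if $\Phi(y)\setminus C\in\mathcal{ED}$ and $A\cap\Phi(y)\in\mathcal{ED}^+$, then $A\cap C\in\mathcal{ED}^+$, which forces $A=C$; this gives $\Phi(y)\setminus A\in\mathcal{ED}$, a contradiction. The main obstacle I expect is the continuity bookkeeping: one must confirm that the index growth in $(\dag)$ really bounds the relevant blocks, which relies on the disjointness and monotonicity normalizations made earlier.
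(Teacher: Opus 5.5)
Your proposal is correct and follows the paper's own proof. The paper likewise only observes that the coding operator $\Phi$ from $(\dag)$ is continuous and then invokes Proposition~\ref{prop:ramseyimplynoedmad}. Your bound $A_{x(n(n+5)/2)}[\,\cdot\,]\ge A_{n(n+5)/2}[0]\ge n$ shows that only the blocks with $n\le m$ can contribute a point in column $m$; this supplies the continuity check the paper leaves implicit, in the same way as its remark following $(\ast)$.
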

	
	\begin{proof}
		Note that the coding operator constructed in Proposition \ref{prop:eddichotomous} is continuous. 
		Applying Proposition \ref{prop:ramseyimplynoedmad} together with the assumption that every set in $\Gamma$ has the Ramsey property yields the desired conclusion.
	\end{proof}
	
	As an immediate consequence, we obtain the following result.
	
	\begin{cor}
		There is no analytic infinite $\mathcal{ED}$-MAD family or analytic infinite $\mathcal{ED}_{\textnormal{fin}}$-MAD family.
	\end{cor}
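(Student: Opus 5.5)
This follows from Theorem~\ref{theroem:ramseyimplynoed} applied to the pointclass $\Gamma$ of analytic subsets of Polish spaces. Two things need checking: that this $\Gamma$ satisfies the hypotheses, and that an infinite analytic family is Dedekind-infinite.

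First, I check that $\Gamma$ is a good pointclass. It contains all clopen (indeed all Borel) sets. Analytic sets are closed under continuous preimages, continuous images, finite products, and countable unions and intersections. These are classical facts provable in $\zfaxiom+\ccaxiom_\bbr$, and in particular in $\zfcaxiom$, which is the ambient theory of this corollary. Second, I invoke the classical theorem that every analytic subset of a Polish space has the Ramsey property (Silver, or Galvin--Prikry together with Mathias). In fact analytic sets are completely Ramsey, so every set in $\Gamma\uphar\infsub{\bbn}$ is Ramsey.

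Next, I reduce ``infinite'' to ``Dedekind-infinite''. The corollary is stated without a choice qualifier, so it is read in $\zfcaxiom$, where every infinite set is Dedekind-infinite. Even in $\zfaxiom+\ccaxiom_\bbr$, an infinite $\evendiff$-MAD family is a subset of $\prst(\bbn\times\bbn)\cong\bbr$, and by the remark after the definition of $\ccaxiom_\bbr$ it is Dedekind-infinite. The same holds for families on $\Delta$.

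Finally, I apply Theorem~\ref{theroem:ramseyimplynoed}. It shows that $\Gamma$ contains no Dedekind-infinite $\evendiff$-MAD family and no Dedekind-infinite $\evendiff_{\mathrm{fin}}$-MAD family, which is the claim. One minor point needs attention: the ideals live on $\bbn\times\bbn$ and $\Delta$ rather than on $\bbn$. I transport them along fixed recursive bijections with $\bbn$; these induce homeomorphisms of the power sets, which preserve analyticity. The only nontrivial input is the Ramsey property for analytic sets, and that is a classical theorem I cite rather than prove. I do not expect any real obstacle.
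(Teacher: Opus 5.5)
Your proposal is correct and follows the paper's proof, which simply applies Theorem~\ref{theroem:ramseyimplynoed} to the pointclass of analytic sets. The extra checks you supply are exactly what the paper leaves implicit: the closure properties, Silver's theorem that analytic sets are Ramsey, that infinite implies Dedekind-infinite, and the transport of the ideals from $\mathbb N\times\mathbb N$ and $\Delta$ to $\mathbb N$. Taking analytic subsets of all Polish spaces, rather than only of $[\mathbb N]^\omega$, is also the correct reading of ``good pointclass''.
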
	
	
	\begin{proof}
		Let $ \Gamma $ be the pointclass of analytic subsets of  $[\mathbb N]^\omega$ and apply Theorem \ref{theroem:ramseyimplynoed}.	
	\end{proof}

	\subsection{\texorpdfstring{$ \finalpha{\alpha} $}{Fin\textasciicircum{} α}-MAD Family}
	
	\phantom{a}
	\vspace{6pt}

	We now turn to the ideals $\finalpha{\alpha}$ for countable ordinals $\alpha$. The proof that these ideals are dichotomously coded follows the same pattern as for $\mathcal{ED}$, but requires a careful handling of the tree structure $S_\alpha$. To this end, we first introduce the analog of $\mathcal{ED}^{++}$ for the $\finalpha{\alpha}$ hierarchy. We recall Section \ref{section:prelim} for the notations used below.
	
	\begin{defn}[\cite{schrittesser2020ramsey}]
		For each countable ordinal $\alpha$ with $1 \le \alpha < \omega_1$, we recursively define the collection $\finalpha{\alpha++} \subseteq \mathcal P(S_\alpha)$ as follows:
		\begin{itemize}
			\item[(1)] If $\alpha = 1$, then $\finalpha{1++} = \finalpha{1+}$.
			\item[(2)] If $\alpha \ge 2$, then $X \subseteq S_\alpha$ belongs to $\finalpha{\alpha++}$ if and only if $\mathrm{dom}(X)$ is infinite and for every $n \in \mathrm{dom}(X)$, we have $X(n) \in \finalpha{\gamma^\alpha_n++}$.
		\end{itemize}	
	\end{defn}
	
	We list some useful facts here.
	
	\begin{fact}{\label{fact:plusplus}}
		Let $ \alpha \geq 1 $.
		\begin{itemize}
			\item[(1)] Every $X \in \mathrm{Fin}^{\alpha+}$ contains a subset $Y \subseteq X$ with $Y \in \mathrm{Fin}^{\alpha++}$.
			\item[(2)] For $X \in \mathrm{Fin}^{\alpha++}$, we have $\rho_X(\emptyset) = \alpha$, and for any $t \in T(X)$ with $\rho_X(t) > 0$, we have $X(t) \in \mathrm{Fin}^{\rho_X(t)++}$.
			\item[(3)] A set $ X $ is in $\mathrm{Fin}^{\alpha++} $ if and only if it is non-empty and for any $ s\in T(X) $ with $ \rho_X(s)\geq 1 $, we have $ \mathrm{dom}(X(s)) $ is infinite.
		\end{itemize}
	\end{fact}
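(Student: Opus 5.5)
The plan is to prove all three items by induction on $\alpha$, following the recursive definitions of $S_\alpha$, $\finalpha{\alpha}$ and $\finalpha{\alpha++}$. Throughout, the identity $X(n)(t)=X(n^\frown t)$ is used to pass between a set and its sections. For $t\in T(X)$ we also use $\rho_X(n^\frown t)=\rho_{X(n)}(t)$, since the tree of $X(n)$ is exactly the part of $T(X)$ above $\langle n\rangle$.

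\emph{Item (2).} We go by induction on $\alpha$. For $\alpha=1$, the set $X$ is an infinite subset of $\bbn$, so $\rho_X(\emptyset)=\sup\{0+1\}=1$, and the only $t$ with positive rank is $\emptyset$, where $X(\emptyset)=X\in\finalpha{1++}$.

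For $\alpha\ge 2$, each $n\in\dom(X)$ has $X(n)\in\finalpha{\gamma^\alpha_n++}$, so the induction hypothesis gives $\rho_X(\langle n\rangle)=\rho_{X(n)}(\emptyset)=\gamma^\alpha_n$. Hence $\rho_X(\emptyset)=\sup\{\gamma^\alpha_n+1:n\in\dom(X)\}$.

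Because $\dom(X)$ is infinite and $\langle\gamma^\alpha_n\rangle$ is non-decreasing with $\bigcup_n(\gamma^\alpha_n+1)=\alpha$, this supremum over an infinite set of indices is still $\alpha$. In the successor case all terms equal $\beta+1=\alpha$; in the limit case the sequence is strictly increasing and cofinal, so every infinite subsequence is cofinal too.

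For $t=n^\frown t'$ with $\rho_X(t)>0$, we have $X(t)=X(n)(t')$ and $\rho_X(t)=\rho_{X(n)}(t')$, so the induction hypothesis applied to $X(n)$ gives the claim. One should check that $\gamma^\alpha_n\ge 1$ whenever $\alpha\ge 2$, so that $\finalpha{\gamma^\alpha_n++}$ is defined. This holds by the choice of the $\gamma$'s: $\gamma^{\beta+1}_n=\beta\ge1$, and in the limit case $\gamma^\alpha_n\ge 1$ by stipulation.

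\emph{Item (3).} Again by induction on $\alpha$. The forward direction uses (2): whenever $\rho_X(s)\ge1$ we have $X(s)\in\finalpha{\rho_X(s)++}$, and such a set has infinite domain by definition.

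For the converse, take $X\subseteq S_\alpha$ nonempty in which every node of positive rank has infinite domain. Then $\dom(X)$ is infinite, since $\emptyset$ has positive rank when $\alpha\ge1$. Each section $X(n)$ with $n\in\dom(X)$ inherits the hypothesis inside $S_{\gamma^\alpha_n}$, so the induction hypothesis gives $X(n)\in\finalpha{\gamma^\alpha_n++}$.

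The mild point here is to confirm that a node $s\in T(X)$ which is not a leaf really has positive rank. A leaf is a node of $X$, hence has rank $0$ and $\gamma(\alpha,s)=0$. Conversely, by the earlier Fact, if $s\notin X$ then $\gamma(\alpha,s)\ge1$ and $X(s)\neq\{\emptyset\}$.

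\emph{Item (1).} I want to avoid an appeal to choice, since the surrounding results are stated over $\zfaxiom$ or $\zfaxiom+\ccaxiom_\bbr$. So instead of choosing the subsets $Y_n\subseteq X(n)$ separately, I will define a canonical \emph{positive core} of $X$:
\[
Y=\{t\in X:\ \forall s\subsetneq t\ [\,X(s)\notin\finalpha{\gamma(\alpha,s)}\,]\}.
\]
Equivalently, we prune every node at which $X$ becomes small.

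By definition of $\finalpha{\beta}$, if $X(s)\in\finalpha{\gamma(\alpha,s)+}$ with $\gamma(\alpha,s)\ge1$, then infinitely many $n$ satisfy $X(s^\frown n)\in\finalpha{\gamma(\alpha,s^\frown n)+}$. An induction on rank then shows three things:
\begin{itemize}
\item[(a)] every such node $s$ survives into $T(Y)$;
\item[(b)] every surviving node of positive rank has infinitely many surviving successors;
\item[(c)] the surviving nodes reach leaves, because each positive section is nonempty and the tree is well-founded.
\end{itemize}
Then $Y\ne\emptyset$ since $X$ itself is positive, and (3) yields $Y\in\finalpha{\alpha++}$.

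The main obstacle is this bookkeeping in (1): verifying, by a rank or induction argument, that every positive node of $T(Y)$ has infinite domain within $Y$ (not merely within $X$). This is what makes the canonical pruning coincide with the recursive definition of $\finalpha{\alpha++}$, and it is the step I would write out in full.
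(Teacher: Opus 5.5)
Your proposal is correct. For (2) and (3) you follow the paper's route. For (2) you use the identity $\rho_X(n^\frown t)=\rho_{X(n)}(t)$ together with induction on $\alpha$. For the converse of (3) you run an induction on $\alpha$ in which each section $X(n)$ inherits the hypothesis. You also spell out two points the paper leaves implicit: the computation $\sup\{\gamma^\alpha_n+1:n\in\dom(X)\}=\alpha$ over an infinite set of indices, and the check that $\gamma^\alpha_n\ge 1$.

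For (1) your route genuinely differs. The paper only says (1) is proved by induction on $\alpha$. The natural such induction picks, for each of the infinitely many $n$ with $X(n)$ positive, some $Y_n\subseteq X(n)$ in $\finalpha{\gamma^\alpha_n++}$ and glues them together. That silently uses countable choice unless the inductive hypothesis is strengthened to a canonical selection.

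You instead reverse the dependency: you prove (2) and (3) first, define the positive core outright, and verify it through the characterization (3). This is choice-free and uniform in $X$, which is what the paper actually needs. Its $\finalpha{\alpha}$ results are stated over $\zfaxiom$, and (1) is applied to countably many $A_n$ at once. A canonical version of the paper's induction would simply reproduce your core. The paper's order has the advantage of proving (1) without first needing the rank machinery of (2) and (3).

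One wording fix is needed. Claim (a) must be stated for nodes $s$ with $X(s')\in\finalpha{\gamma(\alpha,s')+}$ for \emph{every} $s'\subseteq s$, not merely for $s$ itself. A node whose own section is positive but some of whose initial segments has a small section is pruned from $T(Y)$, so (a) as literally written is false. The stronger form is exactly what (b) uses. Indeed, a node of positive rank in $T(Y)$ is a proper initial segment of some $t\in Y$, so all its initial segments are positive. Hence it has infinitely many children all of whose initial segments are positive. Your descent argument, by induction on $\rho_X(s)$ or on $\gamma(\alpha,s)$, proves precisely this corrected form.
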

	
	\begin{proof}
		For brevity, we outline the proofs as follows.
		\begin{itemize}
			\item[(1)] This is proved via induction on $ \alpha $ and the proof is omitted.
			\item[(2)]
			\begin{itemize}
				\item[i.] Note that, via a simple induction, for any set $X \subseteq S_\alpha$, integer $n$, and sequence $m \in T(X(n))$, we have $ \rho_X(n^\frown m) = \rho_{X(n)}(m) $. From this we can prove the first part.
				\item[ii.] For the second part, note that $ X(t)\in \finalpha{\rho_{X(t)}(\emptyset)++} $ and we have $\rho_{X(t)}(\emptyset)  = \rho_X(t) $ by iteratively using $ \rho_X(n^\frown m) = \rho_{X(n)}(m) $.
			\end{itemize}
			\item[(3)] Assume $ X\in \finalpha{\alpha++} $, then it is non-empty, and by (2) the condition on the right-hand side holds. Conversely, we do induction on $ \alpha $.
			\begin{itemize}
				\item[i.] For $ \alpha=1 $, we have $ T(X) = \{\emptyset\}\cup X $ so $ \rho_X(\emptyset)=1 $. By the assumption, $ X=X(\emptyset) $ is infinite.
				\item[ii.] For $ \alpha>1 $, as $ X \not=\emptyset$, we have $ \rho_X(\emptyset)>1 $. So by the assumption $ \dom(X) = \dom(X(\emptyset)) $ is infinite. Also, for any $ n\in \dom(X) $, by definition $ X(n)\not=\emptyset $. Since for any $ s\in T(X(n)) $ we have $ \rho_{X(n)}(s) = \rho_X(n^\frown s) $ and $ \dom (X(n)(s))  = \dom(X(n^\frown s))$, $ X(n) $ has satisfied the assumption. By the induction hypothesis we have $ X(n)\in \finalpha{\gamma_n^\alpha ++} $ and thus $ X\in \finalpha{\alpha++} $.
			\end{itemize}
		\end{itemize}
	\end{proof}
	
	Recall that for any $ x\subseteq \bbn $, its $ n $-th element is denoted by $ x(n) $. Here we define its higher dimensional analog.
	
	\begin{defn}
		For $ \alpha \geq 1 $, $X \subseteq S_\alpha$, $ s\in T(X) $ with $ |s|>0 $, and $t \in \finfunc{\bbn}$, 
		\begin{itemize}
			\item[(1)] We say that $ t $ is an \textbf{$ X $-index} of $ s $, and $ s $ is an \textbf{$ X $-evaluation} of $ t $, if $ |s| = |t| $ and for any $ i\in [0,|s|)$, we have that $ s(i) $ is the $ t(i) $-th element of $ \dom(X(s\uphar i)) $. Let
			\[
			\indexsequence(X)=\bigg\{t \in \finfunc{\bbn}: \exists s\in T(X) \  \big[t \text{ is an } X\text{-index of } s\big]\bigg\}
			\]
			as the set of indices of $ X $. Note that for any $ t\in \indexsequence(X) $ it corresponds to a unique $ s \in T(X)$, which is denoted by $ X[ t ] $, and we denote $ X(X[t]) $ by $ X\llb t \rrb $. Also, for any $ s\in T(X) $ there is a unique $ X $-index, which is denoted by $ X^{-1} [ s ] $.
			\item[(2)] 	For $X \subseteq S_\alpha$ and $t \in \finfunc{\bbn}$, we say $t$ is a \textbf{terminal sequence} of $X$ if $ t\in\indexsequence(X) $ and $ X[t] \in X $. Let $ \terminalsequence(X) $ be the set of all terminal sequences of $ X $.
		\end{itemize}
	\end{defn}
	
	Here we note that $ \dom(X(s\uphar 0)) = \dom(X) $, and for any $ s\in X $ with $ |s|\geq 1 $, we have $ \dom(X(s\uphar (|s|-1))) = X(s\uphar (|s|-1)) $. For convenience, we say the empty sequence is the index of itself. By our definition, no proper initial segment of a terminal sequence is itself a terminal sequence; each terminal sequence corresponds to a maximal finite path in $T(X)$. 
	
	The following proposition is also useful.
	
	\begin{prop}{\label{prop:plusplusterminal}}
		Let $ \alpha\geq 1 $. For any $ A\subseteq S_\alpha $, we have that $ A\in\finalpha{\alpha++} $ if and only if for any $ t\in\inffunc{\bbn} $, there is unique $ i\geq 1$ such that $ t\uphar i \in \terminalsequence(A)$. In this case we denote $ t\uphar i $ by $ t_A $.
	\end{prop}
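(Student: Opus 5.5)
We use the characterization in Fact \ref{fact:plusplus}(3): $A\in\finalpha{\alpha++}$ if and only if $A\neq\emptyset$ and $\dom(A(s))$ is infinite for every $s\in T(A)$ with $\rho_A(s)\geq 1$. The statement follows from the observation that, when these domains are infinite, every index sequence extends, and a path in $T(A)$ must eventually terminate by well-foundedness.

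For the forward direction, assume $A\in\finalpha{\alpha++}$ and fix $t\in\inffunc{\bbn}$. We build $s_i=A[t\uphar i]$ by recursion on $i$, starting from $s_0=\emptyset$. Suppose $s_i\in T(A)$ is defined and $s_i\notin A$. Then $\rho_A(s_i)\geq 1$, so $\dom(A(s_i))$ is infinite and has a $t(i)$-th element $m$. Set $s_{i+1}=s_i{}^\frown m\in T(A)$; this makes $t\uphar(i+1)$ an $A$-index of $s_{i+1}$.

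The ranks $\rho_A(s_0)>\rho_A(s_1)>\cdots$ strictly decrease, since $\rho_A(s^\frown n)+1\leq\rho_A(s)$. Hence the recursion stops at some $i$ with $s_i\in A$, and $i\geq1$ because $\emptyset\notin A$ when $\alpha\geq1$. This gives $t\uphar i\in\terminalsequence(A)$.

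For uniqueness, if $t\uphar j$ is terminal with $j<i$, then $A[t\uphar j]=s_j\in A$, since the evaluation of an index is unique and computed by the same recursion. But then $s_j$ is a proper initial segment of $s_i$ with both in $A\subseteq S_\alpha$. This is impossible: by Fact (2), $s_j\in A$ forces $A(s_j)=\{\emptyset\}$, i.e. $\rho_A(s_j)=0$, so $s_j$ has no proper extension in $T(A)$. The same argument rules out $j>i$.

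For the converse, assume every $t\in\inffunc{\bbn}$ has a unique terminal initial segment. Taking any $t$ shows that $\terminalsequence(A)\neq\emptyset$, so $A\neq\emptyset$. Now let $s\in T(A)$ with $\rho_A(s)\geq1$, and suppose toward a contradiction that $\dom(A(s))$ is finite, of size $k$. Let $u=A^{-1}[s]$, so $|u|=|s|$.

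Extend $u$ to $t=u^\frown\langle k\rangle^\frown\cdots$. No $t\uphar j$ with $j\leq|u|$ is terminal, because $A[t\uphar j]=s\uphar j$ is a proper initial segment of some element of $A$. Here we use that $s\notin A$ (as $\rho_A(s)\geq1$) and again that elements of $A$ have no proper extensions in $T(A)$. Also $t\uphar(|u|+1)\notin\indexsequence(A)$, since $\dom(A(s))$ has no $k$-th element. It follows that no longer restriction of $t$ is an index either. So $t$ has no terminal restriction, which is a contradiction.

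The only delicate point is the recurring fact that an element of $A\subseteq S_\alpha$ has no proper extension in $T(A)$. This is the content of Fact (2), which gives $A(s)=\{\emptyset\}$ for $s\in A$. We expect this "antichain" property to be the main thing to verify carefully. We would check it by induction on $\alpha$ using the recursive definition of $S_\alpha$: if $n^\frown s'\in S_\alpha$, then $s'\in S_{\gamma^\alpha_n}$, and $S_0=\{\emptyset\}$ is an antichain.
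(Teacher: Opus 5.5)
Your proof is correct, but it takes a different route from the paper's.

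\textbf{The paper's route.} The paper argues by induction on $\alpha$ directly from the recursive definition of $\finalpha{\alpha++}$.
\begin{itemize}
\item Forward direction: it takes the $t(0)$-th element of $\dom(A)$, uses that the section $A\llb t(0)\rrb$ lies in $\finalpha{\gamma(\alpha,A[t(0)])++}$, and applies the induction hypothesis to the shifted sequence $t\uphar[1,+\infty)$.
\item Converse: it shows that $\dom(A)$ is infinite and that each section $A(n)$ with $n=A[k]$ inherits the terminal-segment property. This uses the fact that elements of $S_\alpha$ have length at least $2$ when $\alpha\geq 2$. It then applies the induction hypothesis to $A(n)$.
\item Uniqueness is only asserted.
\end{itemize}

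\textbf{Your route.} You push all the induction on $\alpha$ into Fact~\ref{fact:plusplus}(3), and then argue globally on the tree $T(A)$.
\begin{itemize}
\item Existence: you descend along the path coded by $t$, with strictly decreasing ranks.
\item Converse: by contraposition, a node $s$ with finite $\dom(A(s))$ of size $k$ lets the sequence $A^{-1}[s]^\frown\langle k\rangle^\frown\cdots$ run off the tree.
\item Uniqueness: you use the antichain property of $S_\alpha$.
\end{itemize}

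\textbf{Comparison.} Your version is shorter and shows why the statement holds: well-foundedness plus infinite branching at every non-leaf node. It also actually proves the uniqueness claim the paper leaves implicit. The cost is that it depends on Fact~\ref{fact:plusplus}(3). The paper only sketches that fact's proof, which itself rests on an induction on $\alpha$ and on item (2) of the same Fact. The paper's argument needs neither the rank function nor that characterization.

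\textbf{A small citation point.} The antichain property ($A(s)=\{\emptyset\}$ for $s\in A$) comes from combining items (1) and (2) of the first Fact in Section~\ref{section:prelim}: since $\gamma(\alpha,s)=0$ for $s\in S_\alpha$, we get $A(s)\subseteq S_0=\{\emptyset\}$. Your direct induction on the definition of $S_\alpha$ also works. Note also that $\rho_A(s_j)=0$ holds by definition for every $s_j\in A$, so it is not what rules out proper extensions; $A(s_j)=\{\emptyset\}$ is what does.
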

	
	\begin{proof}
		\phantom{a}
		\begin{itemize}
			\item[(1)] 	Suppose $ A\in\finalpha{\alpha++} $. We show that for any $ t\in\inffunc{\bbn} $, there is $ i\geq 1$ such that $ t\uphar i \in \terminalsequence(A)$. Such $ i $ is unique if it exists. We proceed by induction on $ \alpha $.
			\begin{itemize}
				\item[i.] For $ \alpha = 1 $, the statement is trivial: for any $ t $, take $ i=1 $.
				\item[ii.] For $ \alpha >1$, suppose this direction holds for any $ \beta<\alpha $. Fix $ t\in \inffunc{\bbn} $. As $  A\in\finalpha{\alpha++} $, we have $ \dom(A) $ is infinite, so the $ t(0) $-th element $ A[t(0)] $ exists. Also, $ A\llb t(0) \rrb \in \finalpha{\gamma(\alpha, A[t(0)])++} $, thus by the induction hypothesis applied to $ t\uphar [1,+\infty) $ we obtain $ i $ such that $ (t\uphar [1,+\infty))\uphar i\in \terminalsequence(A\llb t(0) \rrb) $. This means $ t\uphar (i+1)\in \terminalsequence(A) $. 
			\end{itemize}
			\item[(2)] Conversely, let $ A\subseteq S_\alpha $ satisfy the condition on the right-hand side. We also proceed by induction on $ \alpha $.
			\begin{itemize}
				\item[i.] For $ \alpha = 1 $, by the assumption, for any $ t\in \inffunc{\bbn} $, there exists $ i\geq 1 $ such that $ A[t\uphar i]\in A $. By the definition of the terminal sequence, we have $ A[t\uphar i](0) $ is the $ t(0) $-th element of $ A $, which holds for every $ t(0)\in \bbn $. Thus $ A\in \finalpha{1++} $.
				\item[ii.] For $ \alpha >1 $, suppose this direction holds for any $ \beta<\alpha $. Similarly we have that $A[t\uphar i](0) $ is the $ t(0) $-th element of $ \dom(A) $, which holds for every $ t(0)\in \bbn $. Thus $ \dom(A) $ is infinite. We show that for any $ n\in\dom(A) $ and for any $ t\in\inffunc{\bbn} $ there is $ i\geq 1  $ with $ t\uphar i\in \terminalsequence(A(n)) $, and by the induction hypothesis $ A(n)\in \finalpha{\gamma_n^\alpha ++} $. For any $  n\in \dom(A) $, let $ n=A[k] $. By assumption on $ A $, for any such $ k\in\bbn $, there is $ j $ such that $ A[(k^\frown t)\uphar j] \in A\subseteq S_\alpha$. As $ \alpha\geq 2 $ we have $ j\geq 2 $. Hence $ t\uphar (j-1)\in \terminalsequence(A(n)) $. Thus $ A $ satisfies the definition of $ \finalpha{\alpha++} $ sets, which completes the proof.
			\end{itemize}
		\end{itemize}
	\end{proof}
	
	We now extend the dichotomous-coding property to the $\finalpha{\alpha}$ hierarchy. The key idea remains the same: from a given $\finalpha{\alpha}$-MAD family, we extract a suitable sequence $\{A_n: n\in\bbn\}$ with pairwise disjoint domains and $A_n \in \finalpha{\alpha++}$; then we define a coding operator $\Phi$ using a similar indexing scheme, and finally we verify the two dichotomous-coding conditions via recursive constructions that parallel those in Proposition \ref{prop:eddichotomous}. Regarding the indexing scheme for $\finalpha{\alpha}$, we need to encode arbitrary finite sequences of natural numbers to navigate through the levels of the tree. This is achieved by fixing a bijection $\pi$ that maps finite sequences to natural numbers. Especially, fix a bijection 
	\[
	\pi: \mathbb N^{<\omega}\setminus\left(\{\emptyset\}\cup \bbn\right) \to \mathbb N
	\]
	such that:
	\begin{itemize}
		\item[(1)] If $s \subsetneqq t$, then $\pi(s) < \pi(t)$;
		\item[(2)] If $s \in \mathbb N^{<\omega}$ and $n < m$, then $\pi(s^\frown n) < \pi(s^\frown m)$ and $\pi(n^\frown s) < \pi(m^\frown s)$.
	\end{itemize}
	Note that for any $ n\in \bbn $ we have $ n\leq \pi(n,0) $. For convenience, we set $ \pi(n) = -1 $ for any $ n\in\bbn $. Such a coding function for finite sequences is standard in the literature \cite[Section~1C]{moschovakis2025descriptive}. For instance, let $ p_i $ denote the $ i $-th prime with $ p_0 = 2 $, and define
	\[
	c(s) = \prod_{i<|s|} p_i^{s(i)+1},
	\]
	which is injective. Let $ f $ be the enumeration of the image $ c''\left( \mathbb N^{<\omega}\setminus\left(\{\emptyset\}\cup \bbn\right)\right) $, and then set $ \pi = f\circ c $.

	\begin{prop}\label{prop:findichotomous}
		For each countable ordinal $\alpha$ with $1 \le \alpha < \omega_1$, the ideal $\finalpha{\alpha}$ is dichotomously coded.	
	\end{prop}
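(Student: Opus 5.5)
Following the pattern of Proposition \ref{prop:eddichotomous}, we fix a Dedekind-infinite $\finalpha{\alpha}$-MAD family $\mathcal A$ and a sequence $\{A_n:n\in\bbn\}$ of distinct members. First we normalize it. Using Fact \ref{fact:plusplus}(1), we shrink each $A_n$ to a member of $\finalpha{\alpha++}$. Then, by an interleaved dovetailing choice like the one used for $\mathcal{ED}$, we pick pairwise disjoint infinite sets $Q_n\subseteq\dom(A_n)$. Replacing $A_n$ by $\{s\in A_n: s(0)\in Q_n\}$ and shrinking again, the domains become pairwise disjoint, $A_n\in\finalpha{\alpha++}$, and $A_n[0]<A_{n+1}[0]$. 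Shrinking keeps each $A_n$ positive and $\finalpha{\alpha}$-almost disjoint from every other member of $\mathcal A$, which is all we need.

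Next we define $\Phi$. The idea is to use $x$ as a supply of indices navigating the trees $A_n$ via Proposition \ref{prop:plusplusterminal}. We split $x$ into consecutive blocks $B_k$, and the block structure is governed by $\pi$. Block $B_k$ has a first element $x(m_k)$, which selects $A_{x(m_k)}$. The remaining elements of $B_k$ are indexed by the sequences $t$ with $\pi(t)$ below a bound depending on $k$. We read off index sequences $\tau\in\inffunc{\bbn}$ whose entries are elements of $x$ taken at the positions $\pi(\tau\uphar j)$. Then $\Phi(x)$ is the union over $k$ of the terminal points $A_{x(m_k)}[\tau_{A}]$ for the finite tree of such paths coded in $B_k$. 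The monotonicity properties (1) and (2) of $\pi$ guarantee two things. First, along each path the coordinates appear in increasing order. Second, the finite subtree coded in block $k$ has, at every node of positive rank, at least $k+1$ children; as $k\to\infty$ this yields a $\finalpha{\alpha}$-positive set, by Fact \ref{fact:plusplus}(3) applied in the limit over blocks. We also check that $\Phi$ is continuous, as in the proof of Theorem \ref{thm:mad-ramsey}, and that $\Phi(x)\in\finalpha{\alpha+}$.

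Condition (1) goes as in Claim \ref{claim:edhide}. Given $H$, maximality gives $A\in\mathcal A$ with $\Phi(H)\cap A\in\finalpha{\alpha+}$. We must choose $y\subseteq H$ blockwise so that every coded terminal point lies in $A$. For this we use the rank-based characterization: in a positive subset of $\Phi(H)$, each node of positive rank has infinitely many positive children. So for block $k$ we can choose a block $n_k$ of $H$ beyond everything chosen so far, together with a finitely branching subtree of width $k+1$ staying inside $A$. We then copy the corresponding elements of $H$ into $y$ in the order dictated by $\pi$.

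Condition (2) goes as in Claim \ref{claim:edsplit}. Starting from $x\in[H]^\omega$ with $\Phi(x)\subseteq A$, we have $A\ne A_n$, so $A\cap A_n\in\finalpha{\alpha}$ for each $n$. Even blocks are handled as in (1). For odd blocks we descend through $A_{y_k}$ and at each level avoid the finitely many children $j$ with $(A\cap A_{y_k})(\cdot^\frown j)$ positive. This is done by taking indices beyond a bound $l$ extracted from the definition of $\finalpha{\gamma}$ at that node, iterating down to rank $1$. At rank $1$ the set of terminal points lying in $A$ is finite, so we can choose $k+1$ elements of $x$ avoiding it. The result is that $\Phi(y)\cap A$ and $\Phi(y)\setminus A$ are both positive.

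The main obstacle is this last descent. The set $A\cap A_{y_k}$ being in $\finalpha{\alpha}$ only gives finiteness of the bad children at the root. Deeper nodes require recursively chosen bounds, and these must be compatible with the fixed ordering imposed by $\pi$ on the positions of $y$. We would try to define the odd-block elements in increasing order of $\pi$, choosing each coordinate large enough to dodge the finitely many bad children of its already-determined parent node. Property (1) of $\pi$ ensures that a parent is always fixed before its children.
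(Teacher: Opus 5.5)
There is a genuine gap, at the step you treat most briefly: the positivity of $\Phi(x)$.

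In your design each block $B_k$ codes a \emph{finite} tree of terminal points inside the single set $A_{x(m_k)}$. The first elements $x(m_k)$ strictly increase and the domains of the $A_n$ are pairwise disjoint, so two different blocks never touch the same first-level node. Hence every node of $T(\Phi(x))$ other than the root lies in the finite tree of one block and has only finitely many children. In particular every section $\Phi(x)(c)$ is finite.

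For $\alpha\ge 2$ all $\gamma^\alpha_c\ge 1$, and finite subsets of $S_\gamma$ lie in $\finalpha{\gamma}$ for $\gamma\ge 1$. Therefore $\Phi(x)\in\finalpha{\alpha}$. Already for $\alpha=2$, every column of $\Phi(x)$ is finite, so $\Phi(x)\in\finalpha{2}$. Growing finite width is the right notion of largeness for $\mathcal{ED}$, and it suffices for $\alpha=1$. But Fact \ref{fact:plusplus}(3) needs infinitely many children at each fixed node of positive rank, and no ``limit over blocks'' supplies that. Consequently $\Phi$ does not map into $\finalpha{\alpha+}$. Maximality cannot be applied to $\Phi(H)$ in your condition (1), and condition (2) is impossible, since $\Phi(y)\cap A\subseteq\Phi(y)\in\finalpha{\alpha}$.

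What is needed is that each coded tree be a full ${}^{++}$-tree whose branching indices are spread over all of $x$, not confined to one block. The paper does this as follows:
\begin{itemize}
\item $x(3n)$ selects $A_{x(3n)}$;
\item $x(3n+1)$ selects the column $k_{x,n}=A_{x(3n)}[x(3n+1)]$;
\item for every nonempty $t\in\finfunc{\bbn}$, the entry $x(3\pi(n,t)+2)$ chooses the child taken at the last step of the path coded by $t$ inside $A_{x(3n)}\llb x(3n+1)\rrb$.
\end{itemize}
Since $\pi$ is a bijection onto $\bbn$, the positions belonging to different $n$ interleave and every node receives infinitely many children. So $\Phi(x)\in\finalpha{\alpha++}$ by Proposition \ref{prop:plusplusterminal}.

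Your last paragraph then becomes exactly the right recursion. Define $y(3i+2)$ in the order $i=\pi(n_i,t_i)$, using property (1) of $\pi$ to know that the parent node $n_i{}^\frown(t_i\uphar(|t_i|-1))$ is already fixed. Choose the child either inside $T(B)$ for a ${}^{++}$ set $B\subseteq\Phi(H)\cap A$ (hiding), or with Fr\'echet small $(A_{y(3n_i)}\cap A)$-section (escaping, which is your descent). Condition (2) then needs no growing widths. Hide on the trees with $n$ even and escape on those with $n$ odd. Then infinitely many columns of $\Phi(y)$ carry a ${}^{++}$ section inside $A$, and infinitely many carry one disjoint from $A$.
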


	\begin{proof}
		For $ \alpha = 1 $, it is already proved in Theorem \ref{thm:mad-ramsey}. Thus we assume $ \alpha >1 $ and let $\mathcal A$ be a Dedekind-infinite $\mathrm{Fin}^\alpha$-MAD family. Fix a sequence $\{A_n\}_{n\in\mathbb N}$ of distinct elements from $\mathcal A$. As in Proposition \ref{prop:eddichotomous}, we may assume that the domains $\{\mathrm{dom}(A_n) : n \in \mathbb N\}$ are pairwise disjoint and that each $A_n$ belongs to $\finalpha{\alpha++}$:
		\begin{itemize}
			\item[(1)] For any $ A_n $, define $ P_n = \{m\in\bbn: A_n(m)\in \finalpha{\gamma_m^\alpha +}\} $, which is infinite.
			\item[(2)] Define pairwise disjoint infinite subsets $ Q_n = \{q_{n,j}:j\in\bbn\} \subseteq P_n$ using the square spiral order as in Proposition \ref{prop:eddichotomous}.
			\item[(3)] Replace $ A_n $ by $ \{s\in A_n: s(0)\in Q_n\} $ and further shrink these sets to $ \finalpha{\alpha++} $ sets.
		\end{itemize}
		Also note that $ A_n[0] < A_{n+1}[0] $ for every $ n\in\bbn $.
		
		For every $x \in [\mathbb N]^\omega$, $ n\in\bbn $ and $ t\in \finfunc{\bbn} \cup \inffunc{\bbn}$ with $ |t| \geq 1$, define the sequence $ m(x,n,t) $ by
		\[
		m(x,n,t)(i) = x(3\pi(n, t \uphar (i+1)) + 2), \text{ for any }  i\in [0, |t|),
		\]
		and let $ m(x,n,\emptyset) = \emptyset $ for convenience. When $ x $ is finite, we also define $ m(x,n,t) $ as above whenever $ t $ is not beyond the range of $ x $.
		
		Denote
		\[A_{x(3n)}[x(3n+1)]=k_{x,n} , \  A_{x(3n)}\llb x(3n+1)\rrb=K_{x,n}.
		\]
		Now, the coding operator $\Phi(x) \subseteq S_\alpha$ is defined by
		\[
		\Phi(x) = \bigcup_{n \in \mathbb{N}} \bigcup_{t \in \inffunc{\bbn}}{k_{x,n}}^\frown K_{x,n}[ m(x,n,t)_{K_{x,n}}], \tag{\ddag}
		\]
		where $ m(x,n,t)_{K_{x,n}} $ follows the notation in Proposition \ref{prop:plusplusterminal}. Note that the second union is actually a countable union.
		
		Figure \ref{fig:tildefinalpha} is a possible illustration of $ \Phi(x) $ at $ A_{x(3n)} $ for a single index, where we assume $ \alpha = \omega $, $ \gamma_n^\omega = n+1 $ and $ \dom(A_{x(3n)}) = \bbn $. As $ A_{x(3n)} $ is a $ {}^{++} $-set, blue nodes illustrate that it is everywhere infinitely branching. Red nodes represent the maximal path chosen by the index $ x(3n+1)^\frown m(x,n,t) $.
		
		\begin{figure}[h]
			\centering
			\begin{tikzpicture}[scale=1.04]
				\node at (2.8,3) {\small$A_{x(3n)}$};
				
				\foreach \x in {2, 2.8, 5, 8} {
					\draw[gray!30, thick] (\x, 0) -- (\x+5.2, 5.2);
				}
				
				\draw[red, thick] (8, 0) -- (8+5.2, 5.2);
				
				\draw[->, thick] (2,0) -- (12, 0) node[right, scale=0.7] {$\dom(A_{x(3n)})$};

				\newcommand{\treeOne}[2]{
					\draw (#1,#2) -- ++(-0.25, 0.5);
					\draw (#1,#2) -- ++(0, 0.5);
					\draw (#1,#2) -- ++(0.25, 0.5);
					\fill[blue] (#1,#2) circle (1.5pt);
					\fill[blue] (#1-0.25,#2+0.5) circle (1.5pt);
					\fill[blue] (#1,#2+0.5) circle (1.5pt);
					\fill[blue] (#1+0.25,#2+0.5) circle (1.5pt); 
					\node at (#1+0.25+0.4, #2+0.5) {$\dots$};
				}

				\newcommand{\treeTwo}[2]{
					\coordinate (L) at (#1-1, #2+0.5);
					\coordinate (M) at (#1, #2+0.5);
					\coordinate (R) at (#1+1, #2+0.5);
					\fill[blue] (#1,#2) circle (1.5pt);
					\fill[blue] (#1-1, #2+0.5) circle (1.5pt);
					\fill[blue] (#1,#2+0.5) circle (1.5pt);
					\fill[blue] (#1+1,#2+0.5) circle (1.5pt);
					
					\draw (#1,#2) -- (L); 
					\draw (#1,#2) -- (M); 
					\draw (#1,#2) -- (R);
					\node at (#1+1+0.4, #2+0.5) {$\dots$};
					
					\draw (L) -- ++(-0.15, 0.5) coordinate (treetwoLL);
					\fill[blue] (treetwoLL) circle (1pt);
					\draw (L) -- ++(0, 0.5) coordinate (treetwoLM);
					\fill[blue] (treetwoLM) circle (1pt);
					\draw (L) -- ++(0.15, 0.5) coordinate (treetwoLR);
					\fill[blue] (treetwoLR) circle (1pt);
					
					\node[xshift=0.25cm] at (treetwoLR) {\scalebox{0.5}{${\bm\cdots}$}};

					\draw (M) -- ++(-0.15, 0.5) coordinate (treetwoML);
					\fill[blue] (treetwoML) circle (1pt);
					\draw (M) -- ++(0, 0.5) coordinate (treetwoMM);
					\fill[blue] (treetwoMM) circle (1pt);
					\draw (M) -- ++(0.15, 0.5) coordinate (treetwoMR);
					\fill[blue] (treetwoMR) circle (1pt);
					
					\node[xshift=0.25cm] at (treetwoMR) {\scalebox{0.5}{${\bm\cdots}$}};
					
					\draw (R) -- ++(-0.15, 0.5) coordinate (treetwoRL);
					\fill[blue] (treetwoRL) circle (1pt);
					\draw (R) -- ++(0, 0.5) coordinate (treetwoRM);
					\fill[blue] (treetwoRM) circle (1pt);
					\draw (R) -- ++(0.15, 0.5) coordinate (treetwoRR);
					\fill[blue] (treetwoRR) circle (1pt);
					
					\node[xshift=0.25cm] at (treetwoRR) {\scalebox{0.5}{${\bm\cdots}$}};
				}

				\newcommand{\treeN}[3]{
					\coordinate (L#3) at (#1-1, #2+0.5);
					\coordinate (M#3) at (#1, #2+0.5);
					\coordinate (R#3) at (#1+1, #2+0.5);
					\fill[blue] (#1,#2) circle (1.5pt);
					\fill[blue] (#1-1, #2+0.5) circle (1.5pt);
					\fill[blue] (#1,#2+0.5) circle (1.5pt);
					\fill[blue] (#1+1,#2+0.5) circle (1.5pt);
					
					\draw (#1,#2) -- (L#3); 
					\draw (#1,#2) -- (M#3); 
					\draw (#1,#2) -- (R#3);
					\node at (#1+1+0.4, #2+0.5) {$\dots$};
					
					\draw[dash pattern=on 0pt off 2pt, line cap=round, thick] (L#3) -- ++(0, 0.7) coordinate (L2#3);
					\fill[blue] (L2#3) circle (1.5pt);
					
					\draw[dash pattern=on 0pt off 2pt, line cap=round, thick] (M#3) -- ++(0, 0.7) coordinate (M2#3);
					\fill[blue] (M2#3) circle (1.5pt);
					
					\draw[dash pattern=on 0pt off 2pt, line cap=round, thick] (R#3) -- ++(0, 0.7) coordinate (R2#3);
					\fill[blue] (R2#3) circle (1.5pt);
					
					\node at (#1+1+0.4, #2+0.5+0.7) {$\dots$};

					\draw (L2#3) -- ++(-0.15, 0.5) coordinate (treeNLL#3);
					\fill[blue] (treeNLL#3) circle (1pt);
					\draw (L2#3) -- ++(0, 0.5) coordinate (treeNLM#3);
					\fill[blue] (treeNLM#3) circle (1pt);
					\draw (L2#3) -- ++(0.15, 0.5) coordinate (treeNLR#3);
					\fill[blue] (treeNLR#3) circle (1pt);
					
					\node[xshift=0.25cm] at (treeNLR#3) {\scalebox{0.5}{${\bm\cdots}$}};

					\draw (M2#3) -- ++(-0.15, 0.5) coordinate (treeNML#3);
					\fill[blue] (treeNML#3) circle (1pt);
					\draw (M2#3) -- ++(0, 0.5) coordinate (treeNMM#3);
					\fill[blue] (treeNMM#3) circle (1pt);
					\draw (M2#3) -- ++(0.15, 0.5) coordinate (treeNMR#3);
					\fill[blue] (treeNMR#3) circle (1pt);
					
					\node[xshift=0.25cm] at (treeNMR#3) {\scalebox{0.5}{${\bm\cdots}$}};
					
					\draw (R2#3) -- ++(-0.15, 0.5) coordinate (treeNRL#3);
					\fill[blue] (treeNRL#3) circle (1pt);
					\draw (R2#3) -- ++(0, 0.5) coordinate (treeNRM#3);
					\fill[blue] (treeNRM#3) circle (1pt);
					\draw (R2#3) -- ++(0.15, 0.5) coordinate (treeNRR#3);
					\fill[blue] (treeNRR#3) circle (1pt);
					
					\node[xshift=0.25cm] at (treeNRR#3) {\scalebox{0.5}{${\bm\cdots}$}};
				}

				\node[below=0.2cm,align=center,scale=0.5] at (2,0) {$ 0 $-th\\column};
				\node[below=0.2cm,align=center,scale=0.5] at (2.8,0) {$ 1 $-st\\column};
				\node[below=0.2cm,scale=0.5] at (5,0) {$ 2 $-nd column};
				\draw[dash pattern=on 0pt off 4pt, line cap=round, thick] (6, -0.2) -- (7, -0.2);
				\node[below=0.2cm,scale=0.5] at (8,0) {\textcolor{red}{$ x(3n+1) $}-th column};
				\draw[dash pattern=on 0pt off 4pt, line cap=round, thick] (9, -0.2) -- (10, -0.2);

				\fill[blue] (2, 0) circle (1.5pt); 
				\treeOne{2.8}{0}                     
				\treeTwo{5}{0}                     
				\treeN{8}{0}{row1}                       
				
				\draw[dash pattern=on 0pt off 4pt, line cap=round, thick] (6, 0.2) -- (7, 0.2);
				\draw[dash pattern=on 0pt off 4pt, line cap=round, thick] (9, 0.2) -- (10, 0.2);

				\fill[blue] (2+2, 2) circle (1.5pt); 
				\treeOne{2.8+2}{2}                     
				\treeTwo{5+2}{2}                     
				\treeN{8+2}{2}{row2}                       
				
				\draw[dash pattern=on 0pt off 4pt, line cap=round, thick] (6+2, 2) -- (7+2,2);
				\draw[dash pattern=on 0pt off 4pt, line cap=round, thick] (8+2+0.5, 2) -- (10+1.5, 2);

				\fill[blue] (2+4, 4) circle (1.5pt); 
				\treeOne{2.8+4}{4}                    
				\treeTwo{5+4}{4}                     
				\treeN{8+4}{4}{row3}                       
				
				\draw[dash pattern=on 0pt off 4pt, line cap=round, thick] (6+4, 4) -- (7+4,4);
				\draw[dash pattern=on 0pt off 4pt, line cap=round, thick] (8.5+4, 4) -- (10+3.6, 4);

				\fill[red] (8+2,2) circle (2pt);
				\node[scale=0.4, transform shape] at (12.8, 2) {\textcolor{red}{$ x(3\pi(n,t(0))+2) $}-th row};
				
				\fill[red] (Rrow2) circle (2pt);
				\node[right=0.6cm, align=right, scale=0.4, transform shape] at (Rrow2) {\textcolor{red}{$ x(3\pi(n,t(0),t(1))+2) $}-th No.\\of the domain};
				
				\fill[red] (R2row2) circle (2pt);
				\node[right=1.05cm, align=right, scale=0.4, transform shape] at (R2row2) {\textcolor{red}{$ x(3\pi(n,t^\ast)+2) $}-th No.\\of the domain};
				
				\fill[red] (treeNRRrow2) circle (2pt);
				\node[right=1.27cm, scale=0.4, transform shape] at (treeNRLrow2) {\textcolor{red}{$ x(3\pi(n,t)+2) $}-th No.};
			\end{tikzpicture}
			\caption{Definition of $\Phi(x)$ at $ A_{x(3n)} $ for a single $ t $, where $ t^\ast = t\uphar (|t|-1) $.}
			\label{fig:tildefinalpha}
		\end{figure}

		\begin{claim}{\label{claim:positive}}
			For any $ x\in \infsub{\bbn} $, we have $ \Phi(x)\in \finalpha{\alpha+} $.
		\end{claim}
		
		\begin{claimproof}{\ref{claim:positive}}
			We show that in fact $ \Phi(x) \in \finalpha{\alpha++}$. To prove this, we show that for any $ n\in\bbn  $ we have $ \Phi(x)(k_{x,n})\in \finalpha{\gamma(\alpha, \langle k_{x,n} \rangle)++} $ by Proposition \ref{prop:plusplusterminal}. We prove by induction that for any $ s\in\inffunc{\bbn} $ and $ i\in[1,|m(x,n,s)_{K_{x,n}}|] $,
			\[
			\Phi(x)(k_{x,n})[s\uphar i] = K_{x,n}[m(x,n,s)\uphar i],
			\]
			and hence $ s\uphar |m(x,n,s)_{K_{x,n}}|\in \terminalsequence(\Phi(x)(k_{x,n}))$.
			\begin{itemize}
				\item[(1)] For $ i=1 $, as $ K_{x,n} $ is a $ {}^{++} $-set, we have that for any $ t\in \inffunc{\bbn} $, $ |m(x,n,t)_{K_{x,n}}| \geq 1$, which implies
				\[
				\dom(\Phi(x)(k_{x,n})) = \{K_{x,n}[x(3\pi(n,l)+2)]: l\in\bbn\}.
				\]
				Consequently, the $ s(0) $-th number in this domain $ \Phi(x)(k_{x,n})[s\uphar 1] $ is just $ K_{x,n}[m(x,n,s)\uphar 1] $.
				\item[(2)]For $ i>1 $ with $ i\leq |m(x,n,s)_{K_{x,n}}| $, by the induction hypothesis we have 
				\[
				\Phi(x)(k_{x,n})[s\uphar (i-1)] = K_{x,n}[m(x,n,s)\uphar (i-1)].
				\]
				For convenience, set
				\[
				P = \Phi(x)(k_{x,n})\llb s\uphar (i-1)\rrb, \ Q = K_{x,n}\llb m(x,n,s)\uphar (i-1)\rrb.
				\]
				By the definition of $ \Phi(x) $, we have that
				\[
				P = \bigcup_{t\in\inffunc{\bbn}}Q[m(x,n,s\uphar(i-1)^\frown t)_Q].
				\]
				Since $ Q $ is non-empty and thus a $ {}^{++} $-set, for any $ t\in\inffunc{\bbn} $, we have $ |m(x,n,s\uphar(i-1)^\frown t)_Q|\geq 1 $. Consequently,
				\[
				\dom(P) = \{Q[x(3\pi(n,s\uphar(i-1) ^\frown l)+2)]: l\in\bbn\},
				\]
				and the $ s(i-1) $-th number in the domain is just $ x(3\pi(n,s\uphar i)+2) $-th number in $ \dom(Q) $. This completes the induction step.
			\end{itemize}
		\end{claimproof}
		
		We now show that $\Phi$ satisfies the two conditions of Definition \ref{def:dichotomous-coding}. The proof follows the same pattern as in Proposition \ref{prop:eddichotomous}, with the key difference that we now work in the tree $S_\alpha$ and use the function $\pi$ to index deeper levels.
		
		\begin{claim}{\label{claim:hide}}
			For every $H \in [\mathbb N]^\omega$, there exist $y \in [H]^\omega$ and $A \in \mathcal A$ such that $\Phi(y) \subseteq A$.
		\end{claim}
		
		\begin{claimproof}{\ref{claim:hide}}
			Since $\mathcal A$ is a $\finalpha{\alpha}$-MAD family and $\Phi(H) \in \finalpha{\alpha+}$, by maximality there exists $A \in \mathcal A$ such that $\Phi(H) \cap A \in \finalpha{\alpha+}$, and we have a subset $ B \subseteq \Phi(H) \cap A  $ with $ B\in \finalpha{\alpha++} $. 
			
			Now, we construct $y \in [H]^\omega$. For any $ i $ we let $ n_i {}^\frown t_i $ be the sequence satisfying $ \pi(n_i {}^\frown t_i) = i $. Note that $ |t_i| \geq 1$ and
			\[
			n_i\leq \pi(n_i {}^{\frown} (t_i \uphar 1))\leq \cdots \leq \pi(n_i {}^{\frown} (t_i \uphar j)) \leq \cdots \leq \pi(n_i {}^{\frown} t_i)=i.
			\]
			For convenience, set $ y(-1) = -1 $ and $ u_{-1} = \emptyset $. We recursively define  $y(3i)$, $y(3i+1)$, and $y(3i+2)$ for any $ i\in\bbn $ such that
			\[
			k_{y,n_i} \concatenate K_{y,n_i}[m(y,n_i , t_i \uphar (|t_i|-1 )) \uphar T_i]\in T(B)
			\]
			where 
			\[
			T_i = |(m(y,n_i , t_i \uphar (|t_i|-1 ))\concatenate \mathrm{Id})_{K_{y,n_i}}|,
			\]
			and $ \mathrm{Id} $ is the identity function on $ \bbn $.
			
			The construction proceeds as follows. For each $ i $,
			\begin{itemize}
				\item[(1)] Define $ m_i = \min\{m: k_{H,m}\in \dom(B) \wedge H(3m) > y(3i-1)\}$, and set $y(3i) = H(3m_i)$.
				\item[(2)] Set $ y(3i+1) = H(3m_i +1) $.
				\item[(3)] To define $ y(3i+2) $, by the induction hypothesis,
				\begin{itemize}
					\item[i.] If $ T_i\leq |t_i |-1 $, then set $ y(3i+2) = H(3m_i +2) $;
					\item[ii.] Otherwise, we denote
					\[
					H_l = H(3\pi(m_{n_i} ,\langle u_{\pi(n_i, t_i \uphar j)}: j <|t_i| \rangle, l )+2),
					\]
					then set
					\[
					\begin{aligned}
						u_i &= \min\{l:A_{y(3n_i)}[y(3n_i +1),m(y,n_i , t_i \uphar (|t_i|-1 )), H_l ]\\
						&\in T(B) \wedge H_l > y(3i+1)\},
					\end{aligned}
					\]
					and finally define $ y(3i+2) = H_{u_i} $. 
				\end{itemize}
			\end{itemize}
			
			Note that when defining $ y(2) $, the induction hypothesis is automatically satisfied, and we can define $ u_i $ because of Fact \ref{fact:plusplus}(3). This sufficies to complete the recursion. It is then routine to verify that $ \Phi(y)\subseteq B $.
		\end{claimproof}
		
		\begin{claim}{\label{claim:split}}
			For every $H \in [\mathbb N]^\omega$, there exist $y \in [H]^\omega$ and $A \in \mathcal A$ such that both $A \cap \Phi(y)$ and $\Phi(y) \setminus A$ belong to $\finalpha{\alpha+}$.
		\end{claim}
		
		\begin{claimproof}{\ref{claim:split}}
			As in the previous proof, we have $ A\in \idl{A}$ and $ B\in \finalpha{\alpha++} $ such that $ B\subseteq \Phi(H)\cap A $. We define $ y $ by hiding in and escaping from $ A $ alternately. We recursively define  $y(3i)$, $y(3i+1)$, and $y(3i+2)$ for any $ i\in\bbn $ such that
			\[
			\left\{\begin{aligned}
				&k_{y,n_i} \concatenate K_{y,n_i}[m(y,n_i , t_i^\ast) \!\uphar\! T_i]\in T(B), &\!\text{if } n_i \text{ is even},\\
				(A_{y(3 n_i)} \!\cap\! A)(&k_{y,n_i} \concatenate K_{y,n_i}[m(y,n_i , t_i^\ast) \!\uphar\! T_i]) \text{ is Fr\'echet small}, &\text{if } n_i \text{ is odd},
			\end{aligned}\right.
			\]
			where $t_i^\ast = t_i \!\uphar\! (|t_i|-1 ) $ and $ T_i $ is defined as in Claim \ref{claim:hide}.
			
			The construction proceeds as follows. 
			\begin{itemize}
				\item[(1)] If $ i $ is even,
				\begin{itemize}
					\item[i.] We define $ y(3i) $, $ y(3i+1) $ exactly as in Claim \ref{claim:hide}.
					\item[ii.] For $ y(3i+2) $, we consider $ n_i $. By the induction hypothesis, if $ n_i $ is even, we apply the construction as in Claim \ref{claim:hide}. If it is odd and $ T_i\leq |t_i |-1 $, define $ y(3i+2) = H(3m_i +2) $; otherwise, let
					\[
					\begin{aligned}
						y(3i+2) = \min\{u\in H:\ &(A_{y(3 n_i)} \!\cap\! A)\big(A_{y(3n_i)}[y(3n_i +1), m(y,n_i , t_i^\ast), u ]\big) \\
						&\text{ is Fr\'echet small } \wedge u>y(3i+1)\}.		
					\end{aligned}
					\]
				\end{itemize}
				\item[(2)] 	Now, if $ i $ is odd, 
				\begin{itemize}
					\item[i.] Define $m_i = \min\{m: H(3m) > y(3i-1)\}, \ y(3i) = H(3m_i)$.
					\item[ii.] Define
					\[
					\begin{aligned}
						y(3i+1) &= \min\{u\in H: (A_{y(3n_i)}\cap A) (A_{y(3n_i)}[u])\\
						&\in \finalpha{\gamma(\alpha, A_{y(3n_i)}[u])}\wedge u> y(3i)\}.
					\end{aligned}
					\]
					\item[iii.] For $ y(3i+2) $, the definition is exactly the same as the case when $ i $ is even.
				\end{itemize}
			\end{itemize}
			
			Note that $ 0 = \pi(0,0) $ and $ 1= \pi(1,0) $, and by Fact \ref{fact:plusplus}(3) together with the fact that $ A_{y(3n_i)}\cap A $ is Fr\'echet small for any $ i $, we can define $ y(3i+2) $. Thus the recursion can be carried out. Finally, we obtain that $ A $ splits $ \Phi(y) $.
		\end{claimproof}
		
		Since $\Phi$ satisfies both conditions of Definition \ref{def:dichotomous-coding}, we conclude that $\finalpha{\alpha}$ is a dichotomously coded ideal.
	\end{proof}
	
	We now combine the results of this section to obtain the main theorem on $\finalpha{\alpha}$-MAD families.
	
	\begin{thm}[$ \zfaxiom $]\label{theorem:ramseyimplynofin}
		Let $\Gamma$ be a good pointclass and let $1\leq \alpha < \omega_1$. If every set in $\Gamma$ has the Ramsey property, then $\Gamma$ contains no Dedekind-infinite $\finalpha{\alpha}$-MAD family.
	\end{thm}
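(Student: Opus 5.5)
The plan is to derive Theorem~\ref{theorem:ramseyimplynofin} directly from Proposition~\ref{prop:ramseyimplynoedmad}, exactly as Theorem~\ref{theroem:ramseyimplynoed} was derived for $\mathcal{ED}$. For $\alpha = 1$ the ideal $\finalpha{1}$ is just the ideal of finite sets, so $\finalpha{1}$-MAD families are ordinary MAD families and Theorem~\ref{thm:mad-ramsey} applies. For $\alpha > 1$, Proposition~\ref{prop:findichotomous} says that $\finalpha{\alpha}$ is dichotomously coded. For any Dedekind-infinite $\finalpha{\alpha}$-MAD family $\mathcal A \in \Gamma$ it supplies a coding operator $\Phi$ given by $(\ddag)$. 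Since $S_\alpha$ is identified with $\bbn$ by a fixed bijection, we may view $\Phi$ as a map $\infsub{\bbn} \rtar \prst(\bbn)$. Once $\Phi$ is shown to be $\Gamma$-measurable, Proposition~\ref{prop:ramseyimplynoedmad} yields the conclusion: the set $P = \{z : \exists A \in \mathcal A\ [\Phi(z) \setminus A \in \finalpha{\alpha}]\}$ is not Ramsey.

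The one thing to check is that $\Phi$ is continuous; continuous maps are $\Gamma$-measurable because $\Gamma$ contains the open sets and is closed under continuous preimages. I would fix $p \in S_\alpha$ and show that the question $p \in \Phi(x)$ depends only on a finite initial segment of $x$.

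\begin{itemize}
\item[(1)] Write $p = k^\frown r$. Because the domains of the $A_n$ are pairwise disjoint and $A_n[0] < A_{n+1}[0]$, the value $k_{x,n} = A_{x(3n)}[x(3n+1)]$ is at least $A_{3n}[0] \ge 3n$. So only the finitely many $n$ with $3n \le k$ can have $k_{x,n} = k$, and each $k_{x,n}$ is determined by $x(3n)$ and $x(3n+1)$.
\item[(2)] For such an $n$, the tail $r$ lies in $K_{x,n}$ along a path whose length is bounded by $|r|$. At each level, the index in the domain used by $\Phi$ is $x(3\pi(n, t\uphar(i+1)) + 2)$.
\item[(3)] The domains of the relevant sections of $K_{x,n}$ are increasingly enumerated. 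Since $x$ is increasing, whether the $x(\cdot)$-th element of such a domain equals the corresponding coordinate of $r$ is decided once the indices exceed that coordinate. Consequently only finitely many values of $t$, and so finitely many entries of $x$, are relevant.
\end{itemize}

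This gives continuity, or at worst Borelness of each set $\{x : p \in \Phi(x)\}$, which is all $\Gamma$-measurability requires.

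The main obstacle is making the finiteness in step (2) precise, because $t$ ranges over infinitely many sequences. I would handle it with the monotonicity properties of $\pi$ together with the strict increase of $x$. The level-$i$ coordinate of the path equals the $x(3\pi(\dots)+2)$-th element of an infinite increasing domain, and that element is at least $x(3\pi(\dots)+2)$. Hence if the coordinate equals a fixed number $c$, then $x(3\pi(\dots)+2) \le c$. This forces $\pi(\dots)$ to be at most roughly $c$, and only finitely many sequences have such a small $\pi$-code. Therefore $p \in \Phi(x)$ is equivalent to a finite disjunction of conditions, each involving only $x\uphar N$ for some $N$ depending on $p$. This makes both $\{x : p \in \Phi(x)\}$ and its complement open, so $\Phi$ is continuous and the theorem follows.
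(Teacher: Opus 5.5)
Your proposal is correct and follows the paper's proof essentially verbatim. You reduce $\alpha=1$ to Theorem~\ref{thm:mad-ramsey}, and for $\alpha>1$ you combine Proposition~\ref{prop:findichotomous} with Proposition~\ref{prop:ramseyimplynoedmad} after checking continuity of $\Phi$ with the same two bounds the paper uses: $A_{3n}[0]\le k_{x,n}$ limits the relevant $n$, and $3\pi(n,t\uphar i)+2\le p(i)$ (the paper's finite set $B(n,p)$) limits the relevant $t$.
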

	
	\begin{proof}
		For any countable $ \alpha$, the coding operator constructed in Proposition \ref{prop:findichotomous} is continuous. For $ \alpha = 1 $, it is proved in Theorem \ref{thm:mad-ramsey}. For $ \alpha >1 $, th continuity follows from the fact that for any $ p\in S_\alpha $,
		\[
		\begin{aligned}
			p\in \Phi(x) \text{ if and only if } &\exists n\in\bbn \  \exists t\in \bbn^{|p|-1} \  p = A_{x(3n)}[x(3n+1), m(x,n,t)],\\
			p\notin \Phi(x) \text{ if and only if } &\bigg[\forall n \suchthat A_{3n}[0]\leq p(0), \ p(0) \neq k_{x,n} \bigg] \ \vee \\
			&\bigg[ \exists n\in\bbn \  \big[ \ p(0) = k_{x,n} \  \wedge \  \forall t\in B(n,p)\\
			&\phantom{\bigg[ \exists n\in\bbn \  \big[ \ }p\neq A_{x(3n)}[x(3n+1), m(x,n,t)] \  \big] \bigg],
		\end{aligned}
		\]
		where
		\[
		B(n,p) = \left\{s\in \bbn^{|p|-1}: \forall i\in [1,|p|) \ 3\pi(n,s\uphar i)+2 \leq p(i) \right\},
		\]
		which is a finite set. Note that the second equivalence is true because
		\[
		A_{3n}[0]\leq A_{x(3n)}[0]< k_{x,n}
		\]
		and every evaluation of $ x(3\pi(n,t\uphar i)+2) $ is no less than $ 3\pi(n,t\uphar i)+2 $, for any $ i\geq1 $.
		
		Applying Proposition \ref{prop:ramseyimplynoedmad} with $\mathcal I = \finalpha{\alpha}$ together with the assumption that every set in $\Gamma$ has the Ramsey property yields the desired conclusion.
	\end{proof}
	
	Taking $\Gamma$ to be the pointclass of analytic sets, we obtain the following classical consequence.
	
	\begin{cor}[\cite{bakke2022maximal}]
		For every countable ordinal $\alpha < \omega_1$, there are no analytic infinite $\finalpha{\alpha}$-MAD families.
	\end{cor}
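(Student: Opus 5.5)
The corollary should follow from Theorem \ref{theorem:ramseyimplynofin} once we pick $\Gamma$ to be the analytic sets $\bfsgm^1_1$, working in $\zfcaxiom$. The plan has three steps.

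First, we check that the analytic sets form a good pointclass in the sense of Section \ref{section:prelim}. They contain all clopen (indeed all Borel) sets. They are closed under countable unions, countable intersections, finite products, continuous preimages and continuous images. These are standard closure properties of $\bfsgm^1_1$ and are the same facts already used implicitly in the corollary for ordinary MAD families.

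Second, we invoke Silver's theorem, which is classical in $\zfcaxiom$: every analytic subset of $\infsub{\bbn}$ is (completely) Ramsey. Hence the hypothesis of Theorem \ref{theorem:ramseyimplynofin} holds for $\Gamma = \bfsgm^1_1$.

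Third, we handle the cases $\alpha = 0$ and $\alpha \ge 1$ separately, and convert ``infinite'' into ``Dedekind-infinite''.

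\begin{itemize}
\item For $\alpha = 0$ the ideal $\finalpha{0}$ is trivial on the singleton $S_0$, so there is nothing to prove. Alternatively one may read the statement as ranging over $1\le\alpha<\omega_1$, as in Theorem \ref{theorem:ramseyimplynofin}.
\item For $1 \le \alpha < \omega_1$, let $\mathcal A$ be an infinite analytic $\finalpha{\alpha}$-MAD family. We identify $S_\alpha$ with $\bbn$ via the fixed bijection, so $\mathcal A$ lives in $\prst(\bbn)$. In $\zfcaxiom$, or merely under $\ccaxiom_\bbr$, every infinite set of reals is Dedekind-infinite, so $\mathcal A$ is Dedekind-infinite.
\item Theorem \ref{theorem:ramseyimplynofin} then forbids such an $\mathcal A$ in $\Gamma$, which is a contradiction.
\end{itemize}

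The only point that needs care is that the bijection between $S_\alpha$ and $\bbn$ carries analytic sets to analytic sets. It induces a homeomorphism of $\prst(S_\alpha)$ with $\prst(\bbn)$, so this holds. There is no genuine obstacle: the substantive work is in Proposition \ref{prop:findichotomous} and in the continuity of the coding operator established in Theorem \ref{theorem:ramseyimplynofin}.
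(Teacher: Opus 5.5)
Your proposal is correct and is the paper's argument: the paper obtains the corollary by applying Theorem~\ref{theorem:ramseyimplynofin} with $\Gamma$ the pointclass of analytic sets. Your additional remarks make explicit details the paper leaves implicit: Silver's theorem supplies the Ramsey hypothesis, choice makes every infinite set of reals Dedekind-infinite, the case $\alpha=0$ is degenerate, and analyticity transfers along the bijection $S_\alpha\cong\bbn$.
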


	\section{Questions and Comments}{\label{section:problem}}
	
	We conclude with discussions on some questions and directions for future research.

	\subsection{MAD families}
	
	The following well-known problem remains open.
	
	\begin{pblm}[{\cite[~Problem 4.1]{tornquist2018definability} \cite[~Question 7.4]{neeman2018happy}}]\label{pblm:admad}
		Does $\zfaxiom+\adaxiom$ imply that there are no infinite MAD families?
	\end{pblm}
	
	By Theorem \ref{theorem:introduction}, a positive solution for the following long-standing problem will solve Problem \ref{pblm:admad}:
	
	\begin{pblm}[{\cite[~Remark 8.12]{larson2023extensions}}]\label{pblm:adramsey}
		Does $\zfaxiom+\adaxiom$ \textnormal{(+ $ \dcaxiom_\bbr $)} imply that all sets of reals are Ramsey?
	\end{pblm}
	
	
	Since the method of tilde operators only works for Dedekind-infinite MAD families, the following question naturally arises.
	
	\begin{qst}
		Is $\zfaxiom$ + ``all sets are Ramsey'' + ``there is an infinite Dedekind-finite MAD family'' consistent?
	\end{qst}
	
	One can also ask whether a specific regularity property holding for a pointclass $\Gamma$ implies the non-existence of $\Gamma$-definable MAD families. A local test is the existence of definable MAD families in the generic extensions of their corresponding forcing notions. Note that it is a classical result of Miller that in G\"odel's universe $ L $ there is already a $ \Pi^1_1 $ (lightface) MAD family \cite{miller1989infinite}. For this test, generically indestructible MAD families serve as classical witnesses of counterexamples, as explained in \cite{brendle2013mad,schrittesser2019definable}: under the \emph{Continuum Hypothesis} $ \chaxiom $, well-known transfinite constructions produce indestructible MAD families with respect to Cohen forcing, Random forcing, etc., see \cite[Chapter~VIII]{kunen1980set}; working in $ L $, we have a $ \Delta^1_2 $ well-ordering of reals, along which we can recursively construct $ \Sigma^1_2 $ indestructible MAD families. Such families remain maximal and $ \Sigma^1_2 $ in Cohen, Random, Miller and Sacks extensions of $ L $, and furthermore impliy the existence of $ \Pi^1_1 $ MAD families in these extensions, as T\"ornquist showed that the existence of $ \Sigma^1_2 $ MAD families implies the existence of $ \Pi^1_1 $ MAD families \cite{tornquist2009sigma1}. For Laver forcing, while there are no Laver indestructible MAD families since it adds a dominating real \cite{brendle2005forcing}, Schrittesser--T\"ornquist recently proved that there is a $ \Pi^1_1 $ MAD family in the Laver extension of $ L $, where all $ \Sigma^1_2 $ sets are Laver measurable \cite{tornquist2025happy}. On the other hand, there are no $ \Pi^1_1 $ MAD families in the Mathias extension \cite{schrittesser2019ramsey}, where Mathias forcing also adds a dominating real. The raises the following questions:
	
	\begin{qst}
		\leavevmode
		\begin{enumerate}[label=(\arabic*), font=\upshape]
			\item Is there a good characterization of a $ \sigma $-ideal $ \idl{I} $ such that under $ \dcaxiom_\bbr $ \textnormal{($ \ccaxiom_\bbr $)} ``all sets are $ \idl{I} $-regular'' implies that there are no infinite MAD families? The notion of $ \idl{I} $-regularity is defined in Definition \ref{defn:i_regular}.
			\item Locally, is there a good characterization of a proper forcing $ \mathbb{P} $ such that there are no $ \Pi^1_1 $ MAD families in the $ \mathbb{P} $-generic extension of $ L $, or there are no $ \mathbb{P} $-indestructible MAD families?
		\end{enumerate}
	\end{qst}
	
	Reflecting on Mathias forcing and our results, especially on the construction of coding operators, a natural candidate for the characterization seems to be the fast growth of generic reals, such as adding dominating reals. While Schrittesser-T\"ornquist's result suggests that destroying MAD families is more than adding dominating reals \cite{tornquist2025happy}, currently we don't even know if we have to add dominating reals to destroy a MAD family. This amounts to asking the following well-known question.
	
	
	\begin{pblm}
		Let $ \mathfrak{a} $ be the \emph{almost disjointness number} and $ \mathfrak{d} $ be the \emph{dominating number}.
		
		\begin{itemize}[leftmargin=2.5cm, labelwidth=2cm, labelsep=0.2cm, align=right]
			\item[\textnormal{(\cite{guzman2017ppoints,brendle2025combinatorial})}] For every MAD family, is there a proper forcing that destroys it and does not add unbounded, dominating or unsplit reals?
			\item[\textnormal{(Roitman)}] Does $ \mathfrak{d} = \omega_1 $ imply $ \mathfrak{a} = \omega_1 $?
		\end{itemize}
	\end{pblm}
	

	\subsection{Definable Idealized MAD Families}
	
	For MAD families, $ \bfpi^1_1 $ is the optimal possibility since there are no analytic MAD families. For $ \idl{I} $-MAD families, our results show that they can not be analytic for $\mathcal{ED}$, $\mathcal{ED}_{\mathrm{fin}}$ and $\finalpha{\alpha}$. However, Farkas--Soukup mentioned the following fact:
	
	\begin{fact}[{\cite{farkas2009more}}]
		For every infinite almost disjoint family $\idl{A}\subseteq \infsub{\bbn} $ in a pointclass $ \Gamma $, there exists a proper ideal $ \idl{I} \in \forall^{\idl{N}}\Gamma$ such that $ \idl{A} $ is $ \idl{I} $-MAD. 
	\end{fact}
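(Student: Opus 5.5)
The plan is to take the most economical candidate for $\mathcal I$, namely the ideal of sets that are ``almost disjoint from everything in $\mathcal A$'':
\[
\mathcal I_{\mathcal A}=\{X\subseteq \mathbb N:\ \forall A\in\mathcal A\ [\,X\cap A \text{ is finite}\,]\}.
\]
I will first verify the ideal axioms. $\mathcal I_{\mathcal A}$ is closed under subsets and finite unions, since $(X_1\cup X_2)\cap A=(X_1\cap A)\cup(X_2\cap A)$. It contains all finite sets, so it agrees with the convention that ideals contain all finite subsets. It is proper because $\mathcal A\neq\emptyset$, and any $A\in\mathcal A$ satisfies $\mathbb N\cap A=A$, which is infinite; hence $\mathbb N\notin\mathcal I_{\mathcal A}$.

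Next I will check that $\mathcal A$ is $\mathcal I_{\mathcal A}$-MAD in the sense of the definition in Section~\ref{section:prelim}.
\begin{itemize}
\item[(1)] Every $A\in\mathcal A$ is $\mathcal I_{\mathcal A}$-positive, since $A\cap A=A$ is infinite. Thus $\mathcal A\subseteq\mathcal I_{\mathcal A}^+$.
\item[(2)] For distinct $A,B\in\mathcal A$ the set $A\cap B$ is finite, so $A\cap B\in\mathcal I_{\mathcal A}$. Hence the members of $\mathcal A$ are pairwise $\mathcal I_{\mathcal A}$-almost disjoint.
\item[(3)] For maximality, let $X\in\mathcal I_{\mathcal A}^+$. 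By definition there is $A\in\mathcal A$ with $X\cap A$ infinite. Then $(X\cap A)\cap A=X\cap A$ is infinite, so $X\cap A\in\mathcal I_{\mathcal A}^+$, as required.
\end{itemize}
The combinatorial part thus needs nothing beyond almost disjointness; infinitude of $\mathcal A$ is used only for nontriviality.

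The remaining step is definability, and I expect it to be the only real obstacle. Unfolding the definition gives
\[
X\in\mathcal I_{\mathcal A}\iff \forall z\in\mathcal N\ \big[\,z\notin\mathcal A\ \vee\ X\cap z\text{ is finite}\,\big],
\]
where $\mathcal N$ ranges over codes for subsets of $\mathbb N$. The condition ``$X\cap z$ finite'' is $\bfsgm^0_2$ and hence Borel. The point requiring care is the clause $z\notin\mathcal A$: the matrix lies in the pointclass generated by Borel sets together with $\mathcal A$ or its complement. So the resulting ideal belongs to $\forall^{\mathcal N}\Gamma$ exactly when $\Gamma$ is read as a pointclass containing the relevant (dual) form of $\mathcal A$, as in the Farkas--Soukup convention. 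For instance, when $\mathcal A$ is analytic this gives a co-analytic matrix and hence a co-analytic ideal.

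I will therefore state the bookkeeping precisely: identify the matrix as a finite union of a Borel set and a (co-)$\Gamma$ set, then close under the universal real quantifier. This is where I would check the intended closure hypotheses on $\Gamma$. If needed, I would replace the clause $z\notin\mathcal A$ by the equivalent condition that $z$ is almost disjoint from some member, or use the analytic or coanalytic case, to match the pointclass claimed.
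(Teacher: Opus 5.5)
Your proof is correct and is essentially the paper's argument: the same ideal $\{X\subseteq\mathbb N:\forall A\in\mathcal A\,[X\cap A\text{ finite}]\}$, with the same checks of properness and $\mathcal I$-maximality. Your bookkeeping is also more careful than the paper's one-line ``since $\mathcal A\in\Gamma$''. You write the matrix as a Borel set together with the complement of $\mathcal A$, so strictly one lands in $\forall^{\mathcal N}$ of the dual pointclass; this matches the paper's later remark that a closed $\mathcal A$ gives a $\bfpi^1_1$ ideal.

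Do drop the suggested fallback, however. ``$z$ is almost disjoint from some member of $\mathcal A$'' is not equivalent to $z\notin\mathcal A$, since any member of an almost disjoint family with at least two elements is almost disjoint from every other member.
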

	
	\begin{proof}
		For any such $ \idl{A} $, define
		\[
		\idl{I} =\{x\in \prst(\bbn): \forall A\in\idl{A} \ x\cap A \text{ is finite} \}.
		\]
		Then $ \idl{I} $ is a proper ideal containing all finite sets, and $ \idl{A}\subseteq \idl{I}^+ $ is an $ \idl{I} $-MAD family. Since $ \idl{A}\in\Gamma $, we have $ \idl{I} \in  \forall^{\idl{N}}\Gamma$.
	\end{proof}
	
	As there is a classical example of a closed almost disjoint family of size continuum, we have that there is a $ \bfpi^1_1 $ ideal $ \idl{I} $ with an uncountable closed $ \idl{I} $-MAD family. Also, for the density zero ideal $ \idl{Z}_0 $, there is a countably infinite $ \idl{Z}_0 $-MAD family \cite[Theorem~2.2]{farkas2009more}. In their paper, Farkas--Soukup also adapted the construction of Cohen indestructible MAD families under $ \chaxiom $ to idealized ones, showing the existence of Cohen indestructible $ \idl{I} $-MAD families for analytic $ P $-ideals under $ \chaxiom $ \cite[Theorem~3.1]{farkas2009more}. Thus similarly, for any analytic $ P $-ideal $ \idl{I}\in \Pi^0_3(a) $, there is a $ \Sigma^1_2(a) $ $ \idl{I} $-MAD family in the Cohen extension of $ L[a] $.
	
	This raises the following questions.
	
	\begin{qst}
		\leavevmode
		\begin{enumerate}[label=(\arabic*), font=\upshape]
			\item What is the minimal possible complexity of an infinite (uncountable) $\mathcal I$-MAD family for a given analytic ideal $\mathcal I$?
			\item Is there a good characterization of an analytic ideal $ \idl{I} $ such that there is no infinite (uncountable) analytic $ \idl{I} $-MAD family?
		\end{enumerate}
	\end{qst}
	
	\begin{qst}
		\leavevmode
		\begin{enumerate}[label=(\arabic*), font=\upshape]
			\item Is $ \idl{I} $ dichotomously coded by a Borel coding operator for any $ F_\sigma $ ideal $ \idl{I} $?
			\item Is there an uncountable analytic $ \idl{Z}_0 $-MAD family?
		\end{enumerate}
	\end{qst}

	\subsection{MID Families}
	
	For MID families, corresponding results have also been obtained. By adapting a combinatorial machinery from Eisworth and Shelah to the classical recursion framework, Brendle--Fischer-Khomskii proved that there is a $ \bfsgm^1_2 $ Sacks indestructible MID family which is preserved in countable supported iterations of Sacks forcing, and they also showed that if there is a $ \bfsgm^1_2 $ MID family, then there is a $ \bfpi^1_1 $ MID family \cite{brendle2019definable}. It seems that in a classical setting, regularities cannot preserve MID families while destroying MAD families.
	
	Globally, our results show that classical regularities either only destroy MID families (the Baire property and the Lebesgue measurability), or destroy MAD and MID families at the same time (the Ramsey property). This phenomenon is also reflected in local generic extensions. Among the standard forcing notions in the literature, Sacks forcing is the only one for which the iterated models satisfy $ \mathfrak{i}=\omega_1 $, where $ \mathfrak{i} $ is the \emph{independence number}, while for $ \mathfrak{a} $ this is the case in Cohen, Random, Sacks and Miller extensions \cite[Table 4]{blass2009combinatorial}. As pointed out in \cite[Section~6]{cruz2023partition}, in their iterated models, these standard Borel forcing notions either increase the \emph{covering number} of the \emph{ideal of meager sets} $ \mathbf{cov}(\idl{B}) $ to the size of continuum (finite support iterations of Cohen, Random and Hechler) while in $ \zfcaxiom $ we have $\mathbf{cov}(\idl{B})\leq \mathfrak{i} $, or make the \emph{bounding number} $\mathfrak{b} =  \mathfrak{a} $ (countable support iterations of Sacks, Laver, Mathias and Miller), while $ \mathfrak{b}\leq \mathfrak{i} $ in $ \zfcaxiom $.
	
	Thus we have the following question.
	
	\begin{qst}
		Is there a $ \sigma $-ideal $ \idl{I} $ such that
		\begin{enumerate}[label=(\arabic*), font=\upshape]
			\item Under $ \dcaxiom_{\bbr} $ \textnormal{($ \ccaxiom_{\bbr} $)}, ``all sets are $ \idl{I} $-regular'' implies that there are no MAD families,
			\item But there is a $ \Pi^1_1 $ MID family in the $ \bbp_\idl{I} $-extension of $ L $?
		\end{enumerate}
	\end{qst}
	
	Locally, the following is a long-standing fundamental problem.
	
	\begin{pblm}[Vaughan]
		Is it consistent that $ \mathfrak{i}<\mathfrak{a} $?
	\end{pblm}
	
	We refer to \cite[Section~6]{cruz2023partition} for a comprehensive review on the obstacles and potential solutions to the Vaughan's problem.

	\subsection{Unifying Notions}
	
	Although we have had various examples of regularities that destroy pathologies, there are also some exceptions. Maximal eventually different (MED) families, as an analog to MAD families in the Baire space, have Borel and even compact instances \cite{horowitz2024eventually,schrittesser2018compactness}; Maximal cofinitary groups, another kind of combinatorial object related to MED families, turn out to have Borel and $F_\sigma$ instances \cite{horowitz2025borel,mejak2022cofinitary}. While all of these pathological sets can be realized by maximal $G$-independent (discrete) sets for some hypergraph $G$ \cite{schrittesser2020discrete,mejak2023definability}, they are in different levels of definability.
	
	We would like to pose a general question in a less formal way. Before that some definitions need to be introduced.
	
	\begin{defn}[{\cite[Definition 2.2.1]{khomskii2012regularity}}]\label{defn:i_regular}
		Let $\idl{I}$ be a $\sigma$-ideal on $\bairesp$, and $\bbp_\idl{I}$ be the poset of Borel $\idl{I}$-positive sets with the order given by inclusion. A subset $S\subseteq \bairesp$ is called $\idl{I}$-\textbf{regular} if and only if for any $B\in\bbp_\idl{I}$ there exists $A\in \bbp_\idl{I}$, such that $A\leq B$ and either $A\subseteq S$ or $A\cap S=\emptyset$. Note that we do not assume the properness of $\bbp_\idl{I}$.
	\end{defn}
	
	\begin{defn}[{\cite{schrittesser2020discrete}}]
		A \textbf{hypergraph} $G=(V,H)$ consists of a vertex set $V$ and a set of hyperedges $H$. It is called simple if it has no loops. It is called $<\omega$-uniform if every hyperedge is finite. We assume that a hypergraph is always simple and $<\omega$-uniform.
		
		A set $I\subseteq V$ is called \textbf{independent} (\textbf{discrete}) if no points in $I$ constitute a hyperedge, and is called maximal if it is maximal under subsets.
	\end{defn}

	Let $\choice$ be a sufficiently weak choice principle, such as $\dcaxiom_{\bbr} $, $\ccaxiom_{\bbr}$ or even weaker.
	The question is to find equivalent conditions on the $\sigma$-ideal $\idl{I}$ and hypergraph $G$ of reals such that the following postulate is true:
	\begin{postulate}[$\zfaxiom+\choice$]
		\leavevmode
		\begin{enumerate}[label=(\arabic*), font=\upshape]
			\item All analytic sets of reals are $\idl{I}$-regular.
			\item There are no analytic maximal infinite $G$-independent sets.
			\item $\adaxiom$ implies ``all sets of reals are $\idl{I}$-regular".
			\item ``All sets of reals are $\idl{I}$-regular" implies ``there are no maximal infinite $G$-independent sets".
		\end{enumerate}
	\end{postulate} 
	
	It is known that if $\bbp_\idl{I}$ is proper, then all analytic sets are $\idl{I}$-regular \cite[Proposition 2.2.3]{khomskii2012regularity}. In \cite{ikegami2022determinacy}, Ikegami proved that $\zfaxiom+\dcaxiom+\adaxiom_\bbr$ implies all sets are $\idl{I}$-regular for $\bbp_\idl{I}$ proper. For maximal independent sets, there are mostly concrete examples and not too much is known for general cases. It is shown by Z. Vidny\'anszky that for a type of analytic hypergraphs, $V=L$ implies the existence of $\bfpi^1_1$ maximal independent sets, see \cite{vidnyanszky2014transfinite} and \cite[~Theorem 1.5]{schrittesser2020discrete}. It is shown in \cite[~Theorem 1.2]{schrittesser2019definable} that for any $\Sigma^1_1$ binary relation $R$, there is a $\Delta^1_2$ maximal $R$-discrete set in the Sacks and Miller extensions of $L$. Horowitz--Shelah constructed a Borel graph $G$ such that $\zfaxiom+\dcaxiom+$ ``there are no maximal $G$-independent sets'' is equiconsistent with $\zfcaxiom+$ ``there is an inaccessible cardinal" \cite[~Theorem 2]{horowitz2019non}.

	\section*{Acknowledgments}
	
	The authors are grateful to Renling Jin, David Schrittesser, and Hang Zhang for many valuable comments and discussions that helped improve this paper.

	
	\bibliographystyle{plain}
	\bibliography{mybibtex}
\end{document}